\documentclass[a4paper, reqno, 11pt]{amsart}
\usepackage{color}
\makeatletter

\@addtoreset{equation}{section}
\makeatother
\usepackage{setspace}
\usepackage{xcolor}
\usepackage{here}
\usepackage{graphicx}
\usepackage{float}
\usepackage{hyperref}
\usepackage[T1]{fontenc}
\everymath{\displaystyle}
\usepackage{amsmath}
\usepackage{amssymb}
\usepackage{latexsym}
\usepackage{amsthm}
\usepackage{hyperref}
\usepackage{mathtools}
\usepackage{mathrsfs}
\newtheorem{Thm}{Theorem}[section]

\newtheorem{Lem}[Thm]{Lemma}
\newtheorem{Prop}[Thm]{Proposition}

\newtheorem{Cor}[Thm]{Corollary}

\theoremstyle{definition}

\newtheorem{Def}[Thm]{Definition}
\newtheorem{Rem}[Thm]{Remark}
\newtheorem{Exa}[Thm]{Example}

\begin{document}

\begin{abstract}
We establish a cutoff criterion for finite Markov chains with non-negative Bakry--\'Emery curvature without assuming that the support of the transition matrix is symmetric.
This extends a criterion of Salez based on a refined product condition. 
For each allowed transition, we consider a shortest directed path from its endpoint back to its starting point. 
Our cutoff criterion depends on the maximum length of these paths. 
We also obtain a discrete-time counterpart for lazy chains.
For random walks on finite Abelian groups, we prove non-negative Bakry--\'Emery curvature for arbitrary generating sets.
As an application, we establish almost-sure cutoff for random walks on products of cyclic groups of order three.
\end{abstract}

\title[]{Cutoff on non-negatively curved chains without symmetric support condition}
\author{Kazuki Okamura}
\date{\today}
\address{Department of Mathematics, Faculty of Science, Shizuoka University, 836, Ohya, Suruga-ku, Shizuoka, 422-8529, JAPAN.}
\email{okamura.kazuki@shizuoka.ac.jp}
\subjclass[2020]{Primary 60J27; Secondary 60J10, 60B15}
\keywords{Cutoff phenomenon, mixing time, Bakry--\'Emery curvature, non-symmetric support, random walks on finite groups}

\maketitle

\section{Introduction}

The cutoff phenomenon is a sharp transition in the convergence of a Markov chain to its stationary distribution. 
The total variation distance to the stationary distribution stays close to its maximal value for a long time, and then drops to zero on a much shorter time scale.
It was discovered by Aldous, Diaconis and Shahshahani in the context of card shuffling \cite{DS1981, Aldous1983, AD1986},
and it has been established for a large number of specific chains; see \cite{Diaconis1996, SaloffCoste2004, LP2017}.
However, the proofs usually require a detailed knowledge of the chain under consideration.
It is a fundamental problem to find general conditions which ensure the occurrence of cutoff without determining the precise location of the cutoff time.
Let $t_{\mathrm{rel}}$ and $t_{\mathrm{mix}}(\varepsilon)$ denote the relaxation time and the mixing time at precision $\varepsilon \in (0,1)$, respectively.
For reversible chains, cutoff implies the product condition $t_{\mathrm{rel}} / t_{\mathrm{mix}}(\varepsilon) \to 0$ for every fixed $\varepsilon \in (0,1)$. 
Peres \cite{Peres2004} conjectured that the product condition is also sufficient for cutoff.
This is true for birth-and-death chains \cite{DLP2010} and for random walks on trees \cite{BHP2017}, but it is false in general; see \cite[Section 6]{CSC2008} and \cite[Chapter 18]{LP2017}.

Salez \cite{Salez2024} showed that, for chains with non-negative curvature in the sense of Bakry--\'Emery \cite{BE1985} or in the sense of Ollivier \cite{Ollivier2009}, cutoff occurs under a suitably refined version of the product condition.
More precisely, let $(\mathcal{X}_{(n)}, P_{(n)})$, $n \ge 1$, be a sequence of finite irreducible Markov chains with non-negative curvature such that the support of $P_{(n)}$ is symmetric, that is, $P_{(n)}(x,y) > 0$ if and only if $P_{(n)}(y,x) > 0$.
Let $\Delta_n$ be the inverse of the smallest non-zero off-diagonal entry of $P_{(n)}$,
and let $t_{\mathrm{rel}}^{(n)}$ and $t_{\mathrm{mix}}^{(n)}(\cdot)$ be the relaxation time and the mixing time of the continuous-time chain associated with $(\mathcal{X}_{(n)}, P_{(n)})$. 
\cite[Theorem 1]{Salez2024} states that a cutoff occurs if for every $\varepsilon \in (0,1)$,
\[ \lim_{n \to \infty} \frac{\left(t_{\mathrm{rel}}^{(n)} \log \Delta_n\right)^{2}}{t_{\mathrm{mix}}^{(n)}(\varepsilon)} = 0. \]
This condition involves only the orders of magnitude of $t_{\mathrm{rel}}^{(n)}$, $t_{\mathrm{mix}}^{(n)}(\varepsilon)$ and $\Delta_n$,
and it is satisfied by random walks on Abelian Cayley graphs with reasonably good expansion, in particular by random walks on almost all Abelian Cayley graphs with sufficiently many generators \cite[Corollaries 3 and 4]{Salez2024}.

The proof in \cite{Salez2024} consists of two parts.
The first part is an entropic criterion for cutoff, which is formulated in terms of the varentropy of the heat kernel and holds for every stochastic matrix.
The second part is an upper bound for the varentropy of non-negatively curved chains, which is obtained by combining a local concentration inequality for Lipschitz functions with a gradient estimate for the logarithm of the heat kernel.
Reversibility is not assumed in \cite{Salez2024}.
However, the symmetry of the support is used in several places in the second part. 
It makes the graph distance on $\mathcal{X}_{(n)}$ symmetric, which is used in the definition of the Ollivier--Ricci curvature and in the diameter bound;
it is used in the gradient estimate, both for the bound $\Delta(P^{*}) \le \Delta(P)^{2}$ for the adjoint $P^{*}$ of $P$ with respect to the stationary distribution and for controlling the logarithm of the heat kernel in both directions along each transition. 

The purpose of this paper is to remove the symmetric support condition from \cite[Theorem 1]{Salez2024}.
The key quantity in our approach is $k_{(\mathcal{X}, P)}$ introduced in Definition \ref{def:k} below.
For $x, y \in \mathcal{X}$, let $d_{P}(x \to y)$ be the minimal number of steps of the chain from $x$ to $y$.
This is not symmetric in $x$ and $y$ in general.
The quantity $k_{(\mathcal{X}, P)}$ is the maximum of $d_{P}(y \to x)$ over all pairs $x \ne y$ with $P(x,y) > 0$,
that is, the maximal number of steps needed to go back along a transition of the chain.
It holds that $k_{(\mathcal{X}, P)} \ge 1$, with equality if and only if the support of $P$ is symmetric.
Our main result, Theorem \ref{thm:main}, states that a sequence of chains with non-negative Bakry--\'Emery curvature exhibits cutoff under the condition \eqref{eq:main-assumption},
which is obtained from the above condition by replacing $\log \Delta_n$ with $k_n^{2}\left(1 + \log \Delta_n\right)$, where $k_n$ is the quantity $k_{(\mathcal{X}, P)}$ of the $n$-th chain;
see the paragraph after Theorem \ref{thm:main} for the precise relation between the two conditions.
If the supports are symmetric, then $k_n = 1$, and Theorem \ref{thm:main} contains the cutoff assertion of \cite[Theorem 1]{Salez2024} in the case of the Bakry--\'Emery curvature.
We deal only with the Bakry--\'Emery curvature.
The Ollivier--Ricci curvature considered in \cite{Salez2024} is defined in terms of the graph distance, which is not symmetric without the symmetric support condition, and we do not deal with it in this paper.

The proof of Theorem \ref{thm:main} follows the strategy of \cite{Salez2024}.
The entropic criterion \cite[Theorem 5]{Salez2024} is available without any assumption on the support.
We establish the gradient estimate for the logarithm of the heat kernel (Lemma \ref{lem:log-Pt-Lip} and Proposition \ref{thm:3.14}) and the diameter bound (Proposition \ref{prop:6.3}) for chains whose support is not necessarily symmetric.
In these estimates, going back along a transition of the chain is replaced by going along a path of length at most $k_{(\mathcal{X}, P)}$, and this is how the quantity $k_{(\mathcal{X}, P)}$ enters the estimates.
The bound $t_{\mathrm{rel}} \ge 1/2$ holds without the symmetric support condition (Lemma \ref{lem:trel-lower}). 
We also give a sufficient condition for \eqref{eq:main-assumption} in terms of the sizes $|\mathcal{X}_{(n)}|$ (Proposition \ref{prop:sufficient}), which corresponds to \cite[Corollary 2]{Salez2024},
and the discrete-time counterpart of Theorem \ref{thm:main} for lazy chains (Theorem \ref{thm:discrete}) by using the comparison theorem of Chen and Saloff-Coste \cite{CSC2013}.

Non-symmetric supports appear naturally for random walks on groups.
Let $G$ be a finite Abelian group and let $S$ be a generating set of $G$.
The random walk on $G$ which moves from $x$ to $xs$ with probability $1/|S|$ for each $s \in S$ has symmetric support if and only if $S = S^{-1}$.
Random walks on finite groups driven by random generating sets which are not symmetrized, called random random walks, have been studied by Hildebrand \cite{Hildebrand1994} and Dou and Hildebrand \cite{DH1996}; see \cite{Hildebrand2005} for a survey.
In Section \ref{sec:example}, we show that the random walk on a finite Abelian group driven by an arbitrary generating set has non-negative Bakry--\'Emery curvature (Theorem \ref{thm:abelian}).
We then consider the groups $(\mathbb{Z}/3\mathbb{Z})^{d}$, $d \ge 1$. 
We show that if $S_{d}$ is a uniformly distributed random subset of $(\mathbb{Z}/3\mathbb{Z})^{d} \setminus \{0\}$ of size $\lceil cd \rceil$ with $c > 2\log 3$,
then, almost surely, the random walks driven by $S_{d}$ have non-symmetric supports for every sufficiently large $d$ and exhibit cutoff (Theorem \ref{thm:z3d-random}).
This is a counterpart of \cite[Corollary 4]{Salez2024}, in which the generating sets are symmetrized, and the probability that the assumptions fail for a given $d$ is bounded explicitly.

\subsection{Framework and main result}\label{subsec:framework}

Let $(\mathcal{X}, P)$ be a finite irreducible Markov chain with $|\mathcal{X}| \ge 2$. 
Denote the stationary distribution by $\pi$. 
Consider the continuous-time Markov chain $(X_t)_t$ associated with $(\mathcal{X}, P)$ whose transition probability satisfies 
\[\mathscr{P}_t (x,y) \coloneqq  e^{-t} \sum_{n=0}^{\infty} \frac{t^n}{n!} \, P^n (x,y), \qquad x,y \in \mathcal{X},\ t \ge 0, \]
where $P^n$ denotes the $n$-step transition probabilities of $P$.

For two probability measures $\mu_1,\mu_2$ on $\mathcal{X}$, the total variation distance is defined by
\[ \|\mu_1 - \mu_2\|_{\mathrm{TV}} \coloneqq  \frac{1}{2} \sum_{x \in \mathcal{X}} |\mu_1(x) - \mu_2(x)|. \] 

Define the mixing time by 
\[ t_{\mathrm{mix}}(\varepsilon) \coloneqq  \inf\left\{t\ge 0 \,\middle|\, \max_{x\in\mathcal{X}} \left\|\mathscr{P}_t(x,\cdot)-\pi\right\|_{\mathrm{TV}} \le \varepsilon\right\}, \ 0 < \varepsilon < 1.\]

For a function $f : \mathcal{X} \to \mathbb{R}$, define
\[ \mathbb{E}_\pi [f] \coloneqq  \sum_{x \in \mathcal{X}} f(x)\,\pi(x), \]
and
\begin{equation}\label{eq:def-var} 
\mathrm{Var}_\pi (f) \coloneqq  \sum_{x \in \mathcal{X}} \left(f(x) - \mathbb{E}_\pi[f]\right)^2 \,\pi(x). 
\end{equation} 

For functions $f,g : \mathcal{X} \to \mathbb{R}$ and $x \in \mathcal{X}$, define the carr\'e du champ by 
\[ \Gamma(f,g)(x) \coloneqq  \frac{1}{2}  \sum_{y \in \mathcal{X}} \left(f(x)-f(y)\right)\left(g(x)-g(y)\right)\,P(x,y). \]
Let $\Gamma(f)(x) \coloneqq  \Gamma(f,f)(x)$. 
For functions $f, g : \mathcal{X} \to \mathbb{R}$, define the Dirichlet form by 
\[ \mathcal{E}(f,g) \coloneqq  \sum_{x \in \mathcal{X}} \Gamma(f,g)(x)\,\pi(x). \]
We write $\mathcal{E}(f) \coloneqq  \mathcal{E}(f,f)$.

The relaxation time is defined by
\begin{equation}\label{eq:def-relax} 
t_{\mathrm{rel}} \coloneqq  \sup \left\{ \frac{\mathrm{Var}_\pi(f)}{\mathcal{E}(f)} \middle| f : \mathcal{X} \to \mathbb{R},\ \mathcal{E}(f) > 0 \right\}. 
\end{equation}

For a stochastic matrix $Q$ on $\mathcal{X}$, let 
\[ \Delta(Q) \coloneqq \max\left\{\frac{1}{Q(x,y)} \, \middle| \,  Q(x,y) > 0, \ x \ne y \right\}. \]

Set 
\[ d_{P} (x \to y) \coloneqq \min\left\{k \in \mathbb{N}_0 \,\middle|\, P^k(x,y) > 0 \right\}, \  x, y \in \mathcal{X}, \]
where $\mathbb{N}_0 \coloneqq \{0, 1, 2, \dots\}$.
This is finite since $(\mathcal{X},P)$ is irreducible.
We remark that $d_{P} (x \to x) = 0$ and it can happen that $d_{P} (x \to y) \ne d_{P} (y \to x)$. 

\begin{Def}\label{def:k}
Let
\[ k_{(\mathcal{X}, P)} \coloneqq \max\left\{d_{P}(y \to x)\,\middle|\, d_{P}(x \to y) = 1\right\}. \]
\end{Def}

This is finite by irreducibility of the Markov chain.
Since $d_P (x\to x)=0$, if $d_{P}(x\to y)=1$, then $x\ne y$, and hence,
$d_{P} (y\to x)\ge 1$, that is, $k_{(\mathcal{X}, P)}\ge 1$.
We say that the support of $P$ is symmetric if, for every $x, y \in \mathcal{X}$, $P(x,y) > 0$ if and only if $P(y,x) > 0$.
By the definition of $k_{(\mathcal{X}, P)}$, the support of $P$ is symmetric if and only if $k_{(\mathcal{X}, P)} = 1$.

Define the Laplacian by 
\[\mathcal{L} f(x)\coloneqq \sum_{y\in\mathcal{X}} P(x,y)\left(f(y)-f(x)\right), \quad x \in \mathcal{X},\ f \colon \mathcal{X} \to \mathbb{R}.\]

The Bakry--\'Emery curvature is defined by the supremum of $\kappa$ such that  for every $f : \mathcal{X}\to\mathbb{R}$, and for every $x \in \mathcal{X}$, 
\[ \frac{1}{2} \mathcal{L}\Gamma(f)(x)\ge \Gamma(f,\mathcal{L}f)(x)+\kappa\,\Gamma(f)(x).\] 
We denote this quantity by $\textup{BE}(\mathcal{X}, P)$.

Let $\mathcal{X}_{(n)} = \left(\mathcal{X}_{(n)},\,P_{(n)},\pi_{(n)}\right), n \ge 1$, be a sequence of finite irreducible Markov chains.
For the $n$-th chain $\mathcal{X}_{(n)}$,
we denote by $(\mathscr{P}_t^{(n)})_{t \ge 0}$ the continuous-time semigroup associated with $(\mathcal{X}_{(n)}, P_{(n)})$,
and by $t_{\mathrm{mix}}^{(n)}(\cdot)$ and $t_{\mathrm{rel}}^{(n)}$ the mixing time and the relaxation time of $(\mathcal{X}_{(n)}, P_{(n)})$, respectively.
Let $k_n \coloneqq k_{(\mathcal{X}_{(n)}, P_{(n)})}$ and $\Delta_n \coloneqq \Delta(P_{(n)})$.

We say that a cutoff occurs for $(\mathcal{X}_{(n)})_n$ if for every $\varepsilon \in (0,1)$,
\[ \lim_{n\to\infty} \frac{t_{\mathrm{mix}}^{(n)}(\varepsilon)}{t_{\mathrm{mix}}^{(n)}(1-\varepsilon)} = 1. \]

\begin{Thm}\label{thm:main}
Let $\mathcal{X}_{(n)} = \left(\mathcal{X}_{(n)},\,P_{(n)},\pi_{(n)}\right), n \ge 1$,
be a sequence of finite irreducible Markov chains with non-negative Bakry--\'Emery curvature.
Assume that for every $\varepsilon \in \left(0,1/2\right)$,
\begin{equation}\label{eq:main-assumption}
\lim_{n \to \infty} \frac{k_n^{2}\left(1+\log \Delta_n\right)\,t_{\mathrm{rel}}^{(n)}}{\sqrt{t_{\mathrm{mix}}^{(n)}(1-\varepsilon)}} = 0.
\end{equation}
Then a cutoff occurs for $\left(\mathcal{X}_{(n)} \right)_n$.
\end{Thm}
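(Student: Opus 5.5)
We follow the two-part strategy of \cite{Salez2024}. The first ingredient is the entropic criterion \cite[Theorem 5]{Salez2024}, valid for every stochastic matrix. It says, roughly, that cutoff occurs as soon as the varentropy
\[
V_t(x)\coloneqq \mathrm{Var}_{\mathscr{P}_t(x,\cdot)}\bigl(\log \mathscr{P}_t(x,\cdot)\bigr)
\]
is small compared with the square of the window. More precisely, we need
\[
\sqrt{\max_x V_{t}(x)}\ \ll\ t_{\mathrm{mix}}(1-\varepsilon)-\text{scale}\quad\text{measured against } t_{\mathrm{rel}},
\]
evaluated at $t=t_{\mathrm{mix}}^{(n)}(1-\varepsilon)$. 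So the whole task is an upper bound of the form
\[
\max_x V_t(x)\ \lesssim\ \bigl(k^2(1+\log\Delta)\bigr)^2\, t
\]
for non-negatively curved chains, uniformly in $t\ge$ some constant. We then plug this into the criterion together with $t_{\mathrm{rel}}\ge 1/2$ (Lemma~\ref{lem:trel-lower}), which lets us absorb additive constants. This converts the hypothesis \eqref{eq:main-assumption} into exactly the condition required by \cite[Theorem 5]{Salez2024}.

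To bound the varentropy we combine two estimates. The first is a concentration inequality. Non-negative Bakry--\'Emery curvature gives the sub-commutation $\Gamma(\mathscr{P}_s f)\le \mathscr{P}_s\Gamma(f)$, and hence a local Poincar\'e inequality
\[
\mathrm{Var}_{\mathscr{P}_t(x,\cdot)}(f)\le 2t\,\|\Gamma(f)\|_\infty .
\]
This follows by differentiating $s\mapsto \mathscr{P}_s\bigl((\mathscr{P}_{t-s}f)^2\bigr)$ and uses neither reversibility nor symmetric support. The second is a gradient estimate for $f=\log \mathscr{P}_t(x,\cdot)$, or more precisely for the adjoint heat kernel in the second variable. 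We need
\[
\|\Gamma(\log h_t)\|_\infty\lesssim \bigl(k^2(1+\log\Delta)\bigr)^2 ,
\]
where $h_t$ denotes the relevant density. This is the content of Lemma~\ref{lem:log-Pt-Lip} and Proposition~\ref{thm:3.14}, and combining it with the local Poincar\'e inequality gives the desired varentropy bound.

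The main obstacle is this gradient estimate. In \cite{Salez2024} one bounds $|\log h_t(y)-\log h_t(x)|$ for $P(x,y)>0$ in both directions, using $h_{t+s}\ge \mathscr{P}_s h_t\ge e^{-s}s\,P(\cdot,\cdot)\,h_t$ along a single edge. Without symmetric support, the direction $y\to x$ must instead be handled along a path of length $d_P(y\to x)\le k$. Along such a path
\[
\mathscr{P}_s(y,x)\ge e^{-s}\frac{s^k}{k!}\Delta^{-k},
\]
which costs about $k(1+\log\Delta)$ in the logarithm. We also need a time shift $s\asymp k$, controlled through a Li--Yau/Harnack-type comparison of $h_t$ and $h_{t+s}$ from non-negative curvature. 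That comparison is the second source of a factor $k$, and it is why $k^2$ rather than $k$ appears.

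A further point is that the adjoint chain may have a larger $\Delta(P^*)$. We avoid the bound $\Delta(P^*)\le\Delta(P)^2$ by working directly with $P$ and the stationary measure, writing $\pi(y)/\pi(x)$ through path bounds of length $\le k$. The diameter bound, Proposition~\ref{prop:6.3}, is needed only to show that $t_{\mathrm{mix}}\to\infty$, or at least to handle small times. Once these pieces are in place, the final step is to substitute them into \cite[Theorem 5]{Salez2024}. The $\varepsilon\in(0,1/2)$ formulation covers both $t_{\mathrm{mix}}(\varepsilon)$ and $t_{\mathrm{mix}}(1-\varepsilon)$ by monotonicity.
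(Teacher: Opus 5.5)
Your architecture is the paper's: Theorem~\ref{thm:Thm5Salez}, the local concentration of Lemma~\ref{lem:semigroup}, a log-gradient bound of order $k^2(1+\log\Delta)$, and $t_{\mathrm{rel}}\ge 1/2$. The step that links them fails as you state it.

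\textbf{The gradient bound is not uniform in time.} It does not hold for all $t\ge$ const, so the varentropy bound ``uniformly in $t\ge$ some constant'' is not what this method gives. Proposition~\ref{thm:3.14} requires $t\ge \mathrm{diam}(\mathcal{X})/(k+3)$. The estimate behind Lemma~\ref{lem:log-Pt-Lip} contains $\frac{1}{t}\log\widetilde{\Delta}(\mathscr{P}_t)$, which is controlled by $(k+3)\log(2(k+3)\Delta)$ (Lemma~\ref{lem:Pt-upper} with $\theta=1/(k+3)$) only in that range. The restriction is genuine. For simple random walk on a cycle of length $n$ (curvature $\ge 0$, $k=1$, $\Delta=2$), the increments of $\log\mathscr{P}_1(0,\cdot)$ near the antipode of $0$ are of order $\log n$.

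So you must prove $t_{\mathrm{mix}}^{(n)}(1-\varepsilon)\ge\mathrm{diam}(\mathcal{X}_{(n)})/(k_n+3)$ for large $n$, and this is the actual role of the diameter bound. In Proposition~\ref{thm:7.2} the paper derives $k_n/\sqrt{t_{\mathrm{mix}}^{(n)}(1-\varepsilon)}\to0$ and $k_n^2\sqrt{t_{\mathrm{rel}}^{(n)}}/t_{\mathrm{mix}}^{(n)}(1-\varepsilon)\to0$ from \eqref{eq:main-assumption}. It then inserts these into the non-symmetric bound of Proposition~\ref{prop:6.3}. The factors $1+k$ and $k\sqrt{t_{\mathrm{rel}}}$ there come from $d_P(x\to y)\le k\,d_P(y\to x)$ and $\|d_P(o\to\cdot)\|_{\mathrm{Lip}}\le k$.

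The use you assign to the diameter bound, $t_{\mathrm{mix}}^{(n)}\to\infty$, needs no diameter bound at all; it follows from \eqref{eq:main-assumption} and $t_{\mathrm{rel}}\ge1/2$. Also, the varentropy in Theorem~\ref{thm:Thm5Salez} is that of $\log(\mathscr{P}_t(x,\cdot)/\pi)$, not of $\log\mathscr{P}_t(x,\cdot)$.

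\textbf{The proposed mechanism for the gradient estimate does not work as stated.} Dimension-free $\textup{BE}\ge0$ does not provide a Li--Yau or parabolic Harnack comparison of $h_{t+s}$ with $h_t$; discrete Li--Yau inequalities need a finite-dimensional curvature condition. Moreover $h_t=\mathscr{P}_t(o,\cdot)/\pi$ evolves under the adjoint, $h_{t+s}=\mathscr{P}^*_s h_t$. The curvature of $P$ says nothing about $P^*$ when $P$ is not reversible.

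In the paper, curvature enters only through Lemma~\ref{lem:semigroup}, and the gradient estimate is curvature-free. The argument has three steps:
\begin{itemize}
\item The Poisson/entropy bound $\sum_y P^*(x,y)h_t(y)/h_t(x)\le e-1+\frac1t\log\frac{1}{\mathscr{P}_t(o,x)}$ controls $\log h_t(y)-\log h_t(x)$ whenever $P(y,x)>0$.
\item The reverse direction follows by chaining this bound along a return path of length $\le k$.
\item Each chained step costs $\log\Delta(P^*)\le (k+1)\log\Delta(P)$ by Lemma~\ref{lem:delta-star}.
\end{itemize}
The factor $k(k+1)$ comes from these last two steps, not from a time shift.

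Your time-shift route can be rescued without curvature. Integrate $\partial_u\log h_u(x)\le e-2+\frac1u\log\frac{1}{\mathscr{P}_u(o,x)}$ over $[t,t+s]$. But this again needs $u\gtrsim\mathrm{diam}(\mathcal{X})/k$, which returns you to the first point.
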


We compare Theorem \ref{thm:main} with \cite[Theorem 1]{Salez2024}.
Since $t_{\mathrm{mix}}^{(n)}(\cdot)$ is non-increasing, \eqref{eq:main-assumption} is equivalent to the condition that for every $\varepsilon \in (0,1)$,
\[ \lim_{n \to \infty} \frac{\left(k_n^{2}\left(1+\log \Delta_n\right) t_{\mathrm{rel}}^{(n)}\right)^{2}}{t_{\mathrm{mix}}^{(n)}(\varepsilon)} = 0. \]
Assume that the support of $P_{(n)}$ is symmetric and $|\mathcal{X}_{(n)}| \ge 3$ for every $n \ge 1$, as in \cite{Salez2024}.
Then $k_n = 1$, and $\Delta_n \ge 2$ by Remark \ref{rem:delta-symmetric} below, so that $\log \Delta_n \le 1 + \log \Delta_n \le \left(1 + 1/\log 2\right) \log \Delta_n$.
Hence, in this case, \eqref{eq:main-assumption} is equivalent to the condition that for every $\varepsilon \in (0,1)$,
\[ \lim_{n \to \infty} \frac{\left(t_{\mathrm{rel}}^{(n)} \log \Delta_n\right)^{2}}{t_{\mathrm{mix}}^{(n)}(\varepsilon)} = 0, \]
which is the assumption of \cite[Theorem 1]{Salez2024}.
Therefore Theorem \ref{thm:main} contains the cutoff assertion of \cite[Theorem 1]{Salez2024} in the case of the Bakry--\'Emery curvature.
The factor $k_n^{2}$ in \eqref{eq:main-assumption} comes from the gradient estimate for the logarithm of the heat kernel in Proposition \ref{thm:3.14}.
A quantitative bound for the cutoff window in terms of $k_{(\mathcal{X}, P)}$, $\Delta(P)$, $t_{\mathrm{rel}}$ and $t_{\mathrm{mix}}(\cdot)$ is given in Proposition \ref{thm:5.1}.

We establish the discrete time counterpart of this assertion.
We say that $(\mathcal{X}, P)$ is $\delta$-lazy for $\delta \in (0,1)$ if $P(x,x) \ge \delta$ for every $x \in \mathcal{X}$. 
Define the discrete mixing time by 
\[ t_{\mathrm{mix}, d}(\varepsilon) \coloneqq  \inf\left\{n \ge 0 \,\middle|\, \max_{x\in\mathcal{X}} \left\|P^n (x,\cdot)-\pi\right\|_{\mathrm{TV}} \le \varepsilon\right\}, \ 0 < \varepsilon < 1.\]
For the $n$-th chain $\mathcal{X}_{(n)}$ of a sequence of finite irreducible Markov chains as above,
we denote by $t_{\mathrm{mix},d}^{(n)}(\cdot)$ the discrete mixing time of $(\mathcal{X}_{(n)}, P_{(n)})$.

By Chen and Saloff-Coste \cite[Theorem 3.1]{CSC2013},
we have the following:
\begin{Thm}\label{thm:discrete}
Let $\delta \in (0,1)$.
Let $\mathcal{X}_{(n)} = \left(\mathcal{X}_{(n)},\,P_{(n)},\pi_{(n)}\right), n \ge 1$, be a sequence of finite irreducible $\delta$-lazy Markov chains with non-negative Bakry--\'Emery curvature.
Assume that \eqref{eq:main-assumption} holds for every $\varepsilon \in \left(0,1/2\right)$.
Then a cutoff occurs for the discrete mixing times of $\left(\mathcal{X}_{(n)} \right)_n$, that is, for every $\varepsilon \in (0,1)$,
\[ \lim_{n\to\infty} \frac{t_{\mathrm{mix},d}^{(n)}(\varepsilon)}{t_{\mathrm{mix},d}^{(n)}(1-\varepsilon)} = 1. \]
\end{Thm}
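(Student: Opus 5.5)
The plan is to deduce Theorem \ref{thm:discrete} from Theorem \ref{thm:main} using the comparison theorem of Chen and Saloff-Coste \cite[Theorem 3.1]{CSC2013}. Roughly, that result says the following. Consider a sequence of $\delta$-lazy chains (uniformly in $n$), and suppose $t_{\mathrm{mix}}^{(n)}(\varepsilon) \to \infty$ for some (equivalently every) $\varepsilon$. Then the discrete-time family $(P_{(n)}^m)$ presents a (total variation, max over starting points) cutoff if and only if the continuous-time family $(\mathscr{P}^{(n)}_t)$ does. Moreover the cutoff times are comparable, $t_{\mathrm{mix},d}^{(n)}(\varepsilon) \sim t_{\mathrm{mix}}^{(n)}(\varepsilon)$. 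Granting this, the statement reduces to two points: verify the hypotheses of the comparison theorem, and invoke Theorem \ref{thm:main}.

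First, Theorem \ref{thm:main} applies verbatim. The $\delta$-lazy chains are in particular finite irreducible chains with non-negative Bakry--\'Emery curvature satisfying \eqref{eq:main-assumption}, so the continuous-time chains exhibit cutoff.

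Second, we must check that the mixing times diverge. By Lemma \ref{lem:trel-lower}, $t_{\mathrm{rel}}^{(n)} \ge 1/2$. Also $k_n \ge 1$ and $1+\log\Delta_n \ge 1$, since $\Delta_n \ge 1$. Hence the numerator of \eqref{eq:main-assumption} is at least $1/2$, and \eqref{eq:main-assumption} forces $t_{\mathrm{mix}}^{(n)}(1-\varepsilon) \to \infty$ for every $\varepsilon\in(0,1/2)$. By monotonicity of $t_{\mathrm{mix}}^{(n)}(\cdot)$, this gives $t_{\mathrm{mix}}^{(n)}(\varepsilon)\to\infty$ for every $\varepsilon \in (0,1)$. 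Laziness is uniform in $n$, since $\delta$ is fixed. Together these are exactly the hypotheses needed to transfer cutoff from continuous to discrete time, which yields the claimed limit for $t_{\mathrm{mix},d}^{(n)}(\varepsilon)/t_{\mathrm{mix},d}^{(n)}(1-\varepsilon)$.

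The main obstacle is matching the precise formulation in \cite{CSC2013}. I will have to check the following:
\begin{itemize}
\item that the normalization of the continuous-time chain there (rate $1$ jumps governed by $P$) agrees with $\mathscr{P}_t$ here;
\item that the cutoff definition used there (existence of $t_n$ with $t_{\mathrm{mix}}^{(n)}(\varepsilon)/t_n \to 1$ for all $\varepsilon$) is equivalent to the ratio definition used here;
\item whether the divergence hypothesis is stated for the continuous or the discrete mixing times.
\end{itemize}
For the third item, if the hypothesis is stated in discrete time, I would use the standard bound $t_{\mathrm{mix}}(\varepsilon)\lesssim t_{\mathrm{mix},d}(\varepsilon')$ for lazy chains, coming from Poisson concentration, to transfer divergence. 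Non-reversibility is not an issue, since the Chen--Saloff-Coste comparison for $\delta$-lazy chains in total variation does not require reversibility.
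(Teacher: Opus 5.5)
Your plan is correct and is the paper's argument in outline: continuous-time cutoff from Theorem \ref{thm:main}, divergence of the mixing times via Lemma \ref{lem:trel-lower}, then \cite[Theorem 3.1]{CSC2013}. It needs one bridging step and contains one harmless reordering.

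\textbf{The bridging step.} This is your first checklist item, and your guess there is off. \cite{CSC2013} starts from a family of kernels $K$. It compares the $\delta$-lazy walks $\delta I+(1-\delta)K$ with the semigroups $e^{t(K-I)}$ of the kernels $K$ themselves, not of the lazy walks. So you should proceed as follows.
\begin{itemize}
\item Set $Q_{(n)}\coloneqq(1-\delta)^{-1}(P_{(n)}-\delta I)$. It is stochastic precisely by $\delta$-laziness, and irreducible because it has the same off-diagonal support as $P_{(n)}$.
\item Take $F=\{Q_{(n)}\}$, so that $F_\delta=\{P_{(n)}\}$.
\item Use $e^{t(Q_{(n)}-I)}=\mathscr{P}^{(n)}_{t/(1-\delta)}$ (Lemma \ref{lem:lazy-comparison} (6)).
\end{itemize}
This constant time change carries both the divergence and the cutoff you get for $\mathscr{P}^{(n)}$ over to $F_c$. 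It also turns the conclusion $t^{Q,(n)}_{\mathrm{mix}}(\varepsilon)/t^{(n)}_{\mathrm{mix},d}(\varepsilon)\to 1-\delta$ into your $t^{(n)}_{\mathrm{mix},d}(\varepsilon)\sim t^{(n)}_{\mathrm{mix}}(\varepsilon)$.

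\textbf{The reordering.} The paper applies Theorem \ref{thm:main} to $Q_{(n)}$. That costs Lemma \ref{lem:lazy-comparison} (2)--(5): the same $k$, $\Delta(Q)\le\Delta(P)$, and the scaling of $t_{\mathrm{rel}}$ and of the curvature. You instead apply it to $P_{(n)}$, whose hypotheses are given outright, and only then time-change. This is equally valid and slightly more economical: it needs only Lemma \ref{lem:lazy-comparison} (1) and (6).

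\textbf{Your other two checklist items.} The second is handled as in the paper. You upgrade the symmetric ratio limit to $t_{\mathrm{mix}}(\varepsilon)/t_{\mathrm{mix}}(\eta)\to1$ for all $0<\varepsilon<\eta<1$ by sandwiching with $\alpha=\min\{\varepsilon,1-\eta\}$, combine this with divergence, and invoke \cite[Remark 2.1]{CSC2013}. The Poisson-concentration fallback in your third item is unnecessary. As the paper uses the theorem, the divergence hypothesis is on the continuous-time mixing times of $F_c$.
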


The rest of this paper is organized as follows.
In Section \ref{subsec:varentropy}, we recall the entropic criterion of \cite{Salez2024}, establish the gradient estimate for the logarithm of the heat kernel for chains whose support is not necessarily symmetric, and derive a bound for the cutoff window.
In Section \ref{subsec:diameter}, we give the diameter bound.
In Section \ref{subsec:cutoff}, we complete the proof of Theorem \ref{thm:main} and give a sufficient condition for \eqref{eq:main-assumption} in terms of $|\mathcal{X}_{(n)}|$.
In Section \ref{subsec:discrete}, we prove Theorem \ref{thm:discrete}.
Section \ref{sec:example} is devoted to the examples on finite Abelian groups.

\section{Proof}\label{sec:proof}

\subsection{Varentropy and the log-gradient estimate}\label{subsec:varentropy}

For distributions $\mu,\pi$  on $\mathcal{X}$, let 
\[ d_{\mathrm{KL}}(\mu\|\pi) \coloneqq  \sum_{x\in\mathcal{X}} \mu(x)\log\!\left(\frac{\mu(x)}{\pi(x)}\right), \]
where $0\log 0 \coloneqq  0$. 
Let 
\[ V_{\mathrm{KL}}(\mu\|\pi) \coloneqq  \sum_{x\in\mathcal{X}} \mu(x) \left(\log\!\left(\frac{\mu(x)}{\pi(x)}\right) - d_{\mathrm{KL}}(\mu\|\pi) \right)^{2}. \]

Furthermore, we let 
\[ d_{\mathrm{KL}}^{*}(t) \coloneqq  \max_{o\in\mathcal{X}} d_{\mathrm{KL}}\!\left(\mathscr{P}_t(o,\cdot)\|\pi\right),  \] 
and 
\[ V_{\mathrm{KL}}^{*}(t)\coloneqq  \max_{o\in\mathcal{X}} V_{\mathrm{KL}}\!\left(\mathscr{P}_{t}(o,\cdot)\|\pi\right). \]
For the $n$-th chain $\mathcal{X}_{(n)}$ of a sequence of finite irreducible Markov chains as in Section \ref{subsec:framework},
we denote by $V_{\mathrm{KL},n}^{*}(\cdot)$ the quantity $V_{\mathrm{KL}}^{*}(\cdot)$ defined for $(\mathcal{X}_{(n)}, P_{(n)})$.

\cite[Theorem 5]{Salez2024} provides that
\begin{Thm}\label{thm:Thm5Salez}
For every $\varepsilon \in (0,\tfrac{1}{2})$,
\[ t_{\mathrm{mix}}(\varepsilon) - t_{\mathrm{mix}}(1-\varepsilon) \le
\frac{2}{\varepsilon^{2}}\,t_{\mathrm{rel}}
\left(1+\sqrt{V_{\mathrm{KL}}^{*}\!\left(t_{\mathrm{mix}}(1-\varepsilon)\right)}\right). \]
\end{Thm}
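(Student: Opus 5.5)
The plan is to argue directly from the definition, for an arbitrary stochastic matrix. No curvature or support assumption is needed, since the claim is the general entropic criterion \cite[Theorem 5]{Salez2024}. Fix $\varepsilon\in(0,\tfrac12)$, put $t\coloneqq t_{\mathrm{mix}}(1-\varepsilon)$ and $V\coloneqq V_{\mathrm{KL}}^{*}(t)$. Fix $o\in\mathcal{X}$ and write $\mu\coloneqq\mathscr{P}_t(o,\cdot)$, $D\coloneqq d_{\mathrm{KL}}(\mu\|\pi)$ and $L(x)\coloneqq\log(\mu(x)/\pi(x))$. The goal is to find $h$, not depending on $o$, with
\[
h\le \frac{2}{\varepsilon^2}\,t_{\mathrm{rel}}\bigl(1+\sqrt V\bigr)
\quad\text{and}\quad
\|\mu\mathscr{P}_h-\pi\|_{\mathrm{TV}}\le\varepsilon .
\]
Since $\mu\mathscr{P}_h=\mathscr{P}_{t+h}(o,\cdot)$, this gives $t_{\mathrm{mix}}(\varepsilon)\le t+h$.

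\emph{Step 1 (the entropy is controlled).} By Chebyshev, $L$ lies in $[D-a,D+a]$ outside a set of $\mu$-mass at most $\varepsilon/2$, where $a\coloneqq\sqrt{2V/\varepsilon}$. Let $B\coloneqq\{L\ge D-a\}$. Then $\mu(B)\ge1-\varepsilon/2$, and $\pi(B)\le e^{-(D-a)}\mu(B)\le e^{-(D-a)}$. At time $t$ we have $\|\mu-\pi\|_{\mathrm{TV}}\le 1-\varepsilon$ by definition of $t$, hence
\[
1-\varepsilon\ \ge\ \mu(B)-\pi(B)\ \ge\ 1-\tfrac{\varepsilon}{2}-e^{-(D-a)} .
\]
This gives $D\le a+\log(2/\varepsilon)$.

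\emph{Step 2 (truncation).} Let $A\coloneqq\{L\le D+a\}$, so $\mu(A)\ge 1-\varepsilon/2$. Define the probability measure $\nu\coloneqq\mu(\cdot\cap A)/\mu(A)$. Then $\|\mu-\nu\|_{\mathrm{TV}}\le\varepsilon/2$, and the density of $\nu$ with respect to $\pi$ is bounded by
\[
M\coloneqq \frac{2e^{D+a}}{1}\ \le\ \frac{4}{\varepsilon}\,e^{2a}.
\]
Consequently $\chi^2(\nu\|\pi)\le M$.

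\emph{Step 3 ($L^2$ decay without reversibility).} For $g=d\nu/d\pi$, the density of $\nu\mathscr{P}_h$ is $\mathscr{P}^{*}_h g$, where $\mathscr{P}^{*}_h$ is the semigroup of the $\pi$-adjoint. Moreover
\[
\frac{d}{dh}\mathrm{Var}_\pi\bigl(\mathscr{P}^{*}_h g\bigr)=-2\,\mathcal{E}\bigl(\mathscr{P}^{*}_h g\bigr),
\]
because the Dirichlet form of the adjoint coincides with $\mathcal{E}$. By definition of $t_{\mathrm{rel}}$ this yields $\mathrm{Var}_\pi(\mathscr{P}^{*}_h g)\le e^{-2h/t_{\mathrm{rel}}}M$. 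By Cauchy--Schwarz,
\[
\|\nu\mathscr{P}_h-\pi\|_{\mathrm{TV}}\le \tfrac12\sqrt{M}\,e^{-h/t_{\mathrm{rel}}}.
\]
This is at most $\varepsilon/2$ once $h=t_{\mathrm{rel}}\bigl(\tfrac12\log M+\log(1/\varepsilon)\bigr)$. Combining with Step 2 and the contraction of total variation under $\mathscr{P}_h$ gives $\|\mu\mathscr{P}_h-\pi\|_{\mathrm{TV}}\le\varepsilon$.

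\emph{Step 4 (bookkeeping).} It remains to check that
\[
t_{\mathrm{rel}}\bigl(a+\tfrac12\log(4/\varepsilon)+\log(1/\varepsilon)\bigr)\ \le\ \frac{2}{\varepsilon^2}\,t_{\mathrm{rel}}\bigl(1+\sqrt V\bigr),
\]
using $a=\sqrt{2/\varepsilon}\,\sqrt V\le \frac{2}{\varepsilon^2}\sqrt V$ and $\log(1/\varepsilon)\lesssim 1/\varepsilon^2$ for $\varepsilon<1/2$.

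The main obstacle I expect is matching the exact constant $2/\varepsilon^2$. If the crude estimates above do not fit, I would sharpen them. The first option is a one-sided Chebyshev (Cantelli) inequality for the two tails. The second is to split the budget $\varepsilon$ unevenly between the truncation error and the $L^2$ step. The structural argument (Steps 1--3) is what I am confident in.
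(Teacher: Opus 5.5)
Your proof is correct. The paper does not prove this statement; it quotes \cite[Theorem 5]{Salez2024} and only remarks that the entropic criterion holds for every stochastic matrix. Your argument gives a self-contained justification of exactly that remark. Chebyshev for $L=\log(\mu/\pi)$ under $\mu$ does two jobs: it caps $d_{\mathrm{KL}}$ at time $t_{\mathrm{mix}}(1-\varepsilon)$ (Step~1), and it allows a truncation costing $\varepsilon/2$ in total variation (Step~2). The $L^2$ decay in Step~3 uses only that $P^{*}$ has stationary law $\pi$ and Dirichlet form $\mathcal{E}$. So neither reversibility nor any symmetry of the support enters. The citation buys brevity. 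Your route makes the support-free nature of the criterion transparent, which is the point the paper relies on in the non-symmetric setting, and it gives a sharper dependence on $\varepsilon$.

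Your concern about the constant is unnecessary. Define $h$ through the $o$-independent bound $M\le (4/\varepsilon)e^{2a}$ rather than through $M$ itself. Alternatively, use that $s\mapsto \max_{o}\|\mathscr{P}_s(o,\cdot)-\pi\|_{\mathrm{TV}}$ is non-increasing. Then
\[ \frac{h}{t_{\mathrm{rel}}}=\sqrt{\frac{2V}{\varepsilon}}+\log 2+\frac{3}{2}\log\frac{1}{\varepsilon}. \]
For $\varepsilon\in(0,1/2)$ one has $\sqrt{2/\varepsilon}\le 2/\varepsilon^{2}$ and $\log 2+\frac32\log(1/\varepsilon)\le 2/\varepsilon^{2}$. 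So the stated inequality follows as is, with no need for Cantelli or an uneven split of $\varepsilon$.

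The remaining loose ends are trivial. If $V=0$, take $a=0$ and use that $L=D$ holds $\mu$-almost surely instead of invoking Chebyshev. If $t=0$, $L$ equals $-\infty$ off the support of $\mu$; this affects nothing, since those points carry no $\mu$-mass.
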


Now we give an upper bound for $V_{\mathrm{KL}}^{*} (t)$. 
For $f \colon \mathcal{X}\to\mathbb{R}$, let 
\[ \|f\|_{\mathrm{Lip}} \coloneqq \max\left\{|f(u)-f(v)| \,\colon\, d_P (u \to v) \le 1\right\} \]
and $\|f \|_{\infty} \coloneqq \max_{x \in \mathcal{X}} |f(x)|$.

The following corresponds to \cite[Lemma 9]{Salez2024}. 

\begin{Lem}[local concentration inequality]\label{lem:semigroup}
If the Bakry--\'Emery curvature is non-negative, then 
\[ \left\|\mathscr{P}_{t}(f^2)-(\mathscr{P}_{t} f)^2\right\|_{\infty} \le t\,\|f\|_{\mathrm{Lip}}^2\]
for every $t \ge 0$ and every  $f \colon \mathcal{X} \to \mathbb{R}$.
\end{Lem}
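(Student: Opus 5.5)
The plan is the standard semigroup interpolation argument of Bakry--\'Emery type. Fix $t>0$, $x\in\mathcal{X}$ and $f$, and consider
\[ \phi(s) \coloneqq \mathscr{P}_s\bigl((\mathscr{P}_{t-s}f)^2\bigr)(x), \qquad s\in[0,t]. \]
Then $\phi(t)-\phi(0)=\mathscr{P}_t(f^2)(x)-(\mathscr{P}_tf)^2(x)$. Since $\mathscr{P}_s=e^{s\mathcal{L}}$ on a finite space, everything is smooth. Writing $g=\mathscr{P}_{t-s}f$, we get $\phi'(s)=\mathscr{P}_s\bigl(\mathcal{L}(g^2)-2g\mathcal{L}g\bigr)(x)$. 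The elementary identity $\mathcal{L}(g^2)-2g\mathcal{L}g=2\Gamma(g)$ holds pointwise for any stochastic $P$, with no symmetry needed, since $\sum_y P(x,y)(g(y)-g(x))^2 = \sum_y P(x,y)\bigl(g(y)^2-g(x)^2\bigr)-2g(x)\mathcal{L}g(x)$. Hence
\[ \mathscr{P}_t(f^2)-(\mathscr{P}_tf)^2=2\int_0^t \mathscr{P}_s\Gamma(\mathscr{P}_{t-s}f)\,ds, \]
which also shows the left side is non-negative.

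The key step is the sub-commutation $\Gamma(\mathscr{P}_uf)\le \mathscr{P}_u\Gamma(f)$ pointwise for $u\ge0$. This is the usual consequence of $\mathrm{BE}\ge0$. Set $\psi(r)=\mathscr{P}_r\Gamma(\mathscr{P}_{u-r}f)$ for $r\in[0,u]$. Then
\[ \psi'(r)=\mathscr{P}_r\bigl(\mathcal{L}\Gamma(h)-2\Gamma(h,\mathcal{L}h)\bigr), \qquad h=\mathscr{P}_{u-r}f. \]
Here $\mathcal{L}$ commutes with $\mathscr{P}_{u-r}$, and the derivative of $\Gamma(h)$ in $r$ is $-2\Gamma(h,\mathcal{L}h)$ by bilinearity. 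The curvature inequality with $\kappa=0$, applied to $h$, gives $\mathcal{L}\Gamma(h)-2\Gamma(h,\mathcal{L}h)\ge0$. Since $\mathscr{P}_r$ is positivity preserving, $\psi'\ge0$, and therefore $\Gamma(\mathscr{P}_uf)=\psi(0)\le\psi(u)=\mathscr{P}_u\Gamma(f)$. One point needs care: the definition of $\mathrm{BE}$ as a supremum. If $\mathrm{BE}\ge0$ is the supremum, the inequality holds for every $\kappa<0$, and letting $\kappa\uparrow0$ gives it at $\kappa=0$ (alternatively, the set of admissible $\kappa$ is closed).

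Combining the two steps and using the semigroup property,
\[ \mathscr{P}_t(f^2)-(\mathscr{P}_tf)^2\le 2\int_0^t \mathscr{P}_s\mathscr{P}_{t-s}\Gamma(f)\,ds = 2t\,\mathscr{P}_t\Gamma(f)\le 2t\|\Gamma(f)\|_\infty. \]

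It remains to bound $\Gamma(f)(x)=\tfrac12\sum_y P(x,y)(f(x)-f(y))^2$. Every $y$ with $P(x,y)>0$ satisfies $d_P(x\to y)\le1$: either $y=x$, with distance $0$, or the distance is exactly $1$. This is where the non-symmetric definition of $\|\cdot\|_{\mathrm{Lip}}$ is used, and only forward transitions appear, so no symmetry is required. Hence $|f(x)-f(y)|\le\|f\|_{\mathrm{Lip}}$, and $\Gamma(f)(x)\le\tfrac12\|f\|_{\mathrm{Lip}}^2$. This gives $0\le \mathscr{P}_t(f^2)-(\mathscr{P}_tf)^2\le t\|f\|_{\mathrm{Lip}}^2$ pointwise, which is the claimed sup-norm bound.

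The main point to verify is the sub-commutation step. In particular, the derivative computation and the curvature inequality must be written exactly in the paper's normalization of $\Gamma$, which carries the factor $1/2$. With that normalization, $\tfrac12\mathcal{L}\Gamma(h)\ge\Gamma(h,\mathcal{L}h)$ is precisely $\psi'\ge0$. Reversibility and symmetric support are never used.
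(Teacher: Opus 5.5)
Your proposal is correct and follows essentially the same route as the paper: the interpolation identity $\mathscr{P}_t(f^2)-(\mathscr{P}_tf)^2=2\int_0^t\mathscr{P}_{s}\Gamma(\mathscr{P}_{t-s}f)\,ds$, the sub-commutation $\Gamma(\mathscr{P}_uf)\le\mathscr{P}_u\Gamma(f)$ from non-negative curvature, and the bound $2\Gamma(f)\le\|f\|_{\mathrm{Lip}}^2$ using only forward transitions. The paper merely asserts the sub-commutation and the non-negativity needed for the sup-norm bound, and your write-up supplies both details, including the passage from $\mathrm{BE}\ge0$ to the inequality at $\kappa=0$.
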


\begin{proof}
By the assumption, we see that $\Gamma(\mathcal P_s f)\le \mathcal P_s\Gamma(f)$. 
Therefore, 
\begin{align*}
\mathcal P_t(f^2)-(\mathcal P_t f)^2 = 2\int_0^t \mathcal P_{t-s}\Gamma(\mathcal P_s f)\,ds \le 2t\,\mathcal P_t\Gamma(f) \le t\|f\|_{\mathrm{Lip}}^2.
\end{align*}
\end{proof}

For a stochastic matrix $Q$ on $\mathcal{X}$, let  
\[ \widetilde{\Delta}(Q) \coloneqq \max\left\{\frac{1}{Q(x,y)} \colon Q(x,y)>0\right\}. \]
Then $\widetilde{\Delta}(Q) \ge \Delta(Q)$.

Since $(\mathcal{X}, P)$ is irreducible, $\pi(x) > 0$ for every $x \in \mathcal{X}$.
Let $P^{*}$ be the adjoint of $P$ with respect to $\pi$, that is,
\[ P^{*}(x,y) \coloneqq \frac{\pi(y)}{\pi(x)}\,P(y,x), \quad x, y \in \mathcal{X}. \]
Then $P^{*}$ is a stochastic matrix on $\mathcal{X}$ whose stationary distribution is $\pi$,
and $P^{*}(x,y) > 0$ if and only if $P(y,x) > 0$.

We give a preliminary logarithmic gradient estimate. 

\begin{Lem}\label{lem:log-Pt-Lip}
For $t > 0$ and $o \in \mathcal{X}$,
\[ \left\|\log \frac{\mathscr{P}_t (o, \cdot)}{\pi(\cdot)}\right\|_{\mathrm{Lip}}
\le k_{(\mathcal{X}, P)}\left( \log \Delta(P^*)  + \log\left(e-1+\frac{\log \widetilde{\Delta}(\mathscr{P}_t)}{t}\right) \right). \]
\end{Lem}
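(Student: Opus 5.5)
The plan is to reduce the Lipschitz bound to a one-step comparison along a single transition, and then chain it along return paths of length at most $k_{(\mathcal{X}, P)}$. Fix $t>0$ and $o$, and write $h(z) \coloneqq \mathscr{P}_t(o,z)/\pi(z)$, which is positive everywhere by irreducibility. The key claim is the following. Whenever $P(y,x)>0$,
\[ h(y) \le C\, h(x), \qquad C \coloneqq \Delta(P^*)\left(e-1+\frac{\log \widetilde{\Delta}(\mathscr{P}_t)}{t}\right). \]
Granting this, take $u,v$ with $d_P(u\to v)\le 1$. If $u=v$ there is nothing to prove. Otherwise $P(u,v)>0$, so $h(u)\le C h(v)$. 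By Definition \ref{def:k} there is a path $v=z_0\to z_1\to\cdots\to z_m=u$ with $P(z_{i},z_{i+1})>0$ and $m\le k_{(\mathcal{X},P)}$. Applying the claim $m$ times gives $h(v)\le C^{m}h(u)$. Since $C\ge 1$ (because $\Delta(P^*)\ge 1$ and $e-1\ge 1$), both cases yield $|\log h(u)-\log h(v)|\le k_{(\mathcal{X},P)}\log C$, which is the statement.

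For the claim, I would expand the heat kernel in its Poisson series. Since $P^{n+1}(o,x)\ge P^n(o,y)P(y,x)$ and $P(y,x)\pi(y)/\pi(x)=P^*(x,y)\ge 1/\Delta(P^*)$, we get
\[ h(x)\ \ge\ \frac{1}{\Delta(P^*)}\, e^{-t}\sum_{n\ge0}\frac{t^{n+1}}{(n+1)!}\,\frac{P^n(o,y)}{\pi(y)}. \]
Compare this with $h(y)=e^{-t}\sum_n \frac{t^n}{n!}\frac{P^n(o,y)}{\pi(y)}$. The ratio of the $n$-th coefficients is $t/(n+1)$, which is at least $1/M$ whenever $n+1\le Mt$. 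So the terms of $h(y)$ with $n+1\le Mt$ are at most $M\Delta(P^*)h(x)$.

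The remaining terms $n+1>Mt$ are the main obstacle, because no termwise comparison is available there. I would bound them crudely using $P^n(o,y)\le 1$, so that they contribute at most $\mathbb{P}(\mathrm{Poi}(t)\ge Mt)/\pi(y)$. Then I would absorb this into $h(y)$ itself, using the lower bound $\mathscr{P}_t(o,y)\ge 1/\widetilde{\Delta}(\mathscr{P}_t)$; this is where $\widetilde{\Delta}(\mathscr{P}_t)$ enters. By the Chernoff bound, $\mathbb{P}(\mathrm{Poi}(t)\ge Mt)\le \exp(-t(M\log M-M+1))$. I would choose $M$ of the form $e-1+\log\widetilde{\Delta}(\mathscr{P}_t)/t$, or a slight variant, so that this tail is at most a fixed fraction of $1/\widetilde{\Delta}(\mathscr{P}_t)$. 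The choice should use $M\log M-M+1\ge M-(e-1)$ once $M\ge e$, which gives $t(M\log M - M+1)\ge \log\widetilde{\Delta}(\mathscr{P}_t)$.

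The delicate point is getting exactly the constant in the statement rather than an extra factor such as $2$. To achieve this, I would split the sum more carefully. One option is to compare the tail directly with the tail sum of $h(x)$ instead of discarding it. Another is to use the sharper inequality $M\log M-M+1 \ge (M-1)$-type bounds to get strict domination. Failing both, I would accept a harmless constant and check whether it can be absorbed in the definition of $C$.
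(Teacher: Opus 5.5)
Your reduction is the same as the paper's. The one-edge bound $h(y)\le C\,h(x)$ for $P(y,x)>0$, followed by chaining along a return path of length at most $k_{(\mathcal{X},P)}$ using $C\ge 1$, is exactly how the lemma is concluded there. The gap is the one-edge bound itself: your argument does not give it with the constant in the statement.

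Put $L\coloneqq\log\widetilde{\Delta}(\mathscr{P}_t)$, $B\coloneqq e-1+L/t$ and $\phi(M)\coloneqq M\log M-M+1$. Your truncation yields $h(y)\le \frac{M}{1-\alpha}\,\Delta(P^*)\,h(x)$ with $\alpha=\widetilde{\Delta}(\mathscr{P}_t)\,\mathbb{P}(N>Mt-1)$ and $N\sim\mathrm{Poi}(t)$. (The discarded terms are those with $n+1>Mt$, i.e.\ $N>Mt-1$, not $N\ge Mt$.)

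The inequality $\phi(M)\ge M-(e-1)$ is equivalent to $M\log M-2M+e\ge 0$. This holds for all $M>0$, with equality at $M=e$. So the choice $M=B$ only gives $\mathbb{P}(N\ge Bt)\le e^{-L}$, i.e.\ $\alpha\le 1$, which says nothing. Now take a time with $t=L$ (one exists by continuity), so that $B=e$. Then $t\phi(M)<L$ for every $M<e$, so no $M\le B$ works through Chernoff, and any $M\ge B$ gives $M/(1-\alpha)>B$.

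What your scheme actually proves is a bound like $h(y)\le 2\bigl(e-1+(L+1+\log 2)/t\bigr)\Delta(P^*)h(x)$: take $Mt-1=(e-1)t+L+\log 2$, which gives $\alpha\le 1/2$. Such a bound would survive in Proposition~\ref{thm:3.14} up to numerical constants. It is not the stated inequality, however, and a constant cannot be ``absorbed into $C$'' because $C$ is prescribed by the statement.

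The missing idea, which the paper takes from \cite[Lemma 10]{Salez2024}, is to move the whole comparison to the single point $x$ and replace the hard cut by Jensen's inequality. First, $P^*(x,y)h(y)\le (P^*h)(x)$ and $(P^*h)(x)/h(x)=(\mathscr{P}_tP)(o,x)/\mathscr{P}_t(o,x)$. Let $(Y_n)$ be the discrete chain started at $o$, let $N\sim\mathrm{Poi}(t)$ be independent of it, and let $E\coloneqq\{Y_N=x\}$. Then $\mathbb{P}(E)=\mathscr{P}_t(o,x)$ and $\mathbb{E}[N\mathbf{1}_E]=\sum_{n}n\,e^{-t}\frac{t^n}{n!}P^n(o,x)=t\,(\mathscr{P}_tP)(o,x)$, so
\[ \frac{(\mathscr{P}_tP)(o,x)}{\mathscr{P}_t(o,x)}=\frac{\mathbb{E}[N\mid E]}{t}\le\frac{1}{t}\log\mathbb{E}\bigl[e^{N}\bigm| E\bigr]\le\frac{1}{t}\log\frac{\mathbb{E}[e^{N}]}{\mathbb{P}(E)}=e-1+\frac{1}{t}\log\frac{1}{\mathscr{P}_t(o,x)}\le B. \]
Combined with $P^*(x,y)\ge 1/\Delta(P^*)$, this is exactly your claim with the stated constant. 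The Jensen step in effect optimizes over all truncation levels at once, which is why nothing is lost.

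The choice of base point matters. Your termwise comparison tilts the Poisson weights by $P^n(o,y)$ and leads to $1/\mathbb{E}_Q[t/(N+1)]$ under that tilt $Q$. Even with the same Jensen argument, that route only gives $e-1+\bigl(1+\log(1/\mathscr{P}_t(o,y))\bigr)/t$.
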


We denote $z \to w$ if $d_P (z \to w) = 1$. 
We remark that if $z \to w$, then $z \ne w$. 

\begin{proof} 
For ease of notation, let
\[ f(y) \coloneqq \frac{\mathscr{P}_{t} (o,y)}{\pi(y)}. \]
This is positive for every $y \in \mathcal{X}$ and $t > 0$.

In the same manner as in the proof of \cite[Lemma 10]{Salez2024}, 
we see that 
\[ \sum_{y} P^*(x,y)\,\frac{f(y)}{f(x)} \le e-1+\frac{1}{t}\log\left(\frac{1}{\mathscr{P}_{t} (o,x)}\right). \]

We remark that it can happen that $o=x$; this is the reason why $\widetilde{\Delta}(\mathscr{P}_t)$, rather than $\Delta(\mathscr{P}_t)$, appears in the estimate.
Then we see that
\[ \max_{y:\,y\to x}\frac{f(y)}{f(x)} \le \Delta(P^*)\left(e-1+\frac{1}{t}\log \widetilde{\Delta}(\mathscr{P}_t)\right), \]
that is, 
\[ \max_{y:\,y\to x}\left(\log f(y)-\log f(x)\right) \le \log\Delta(P^*)+\log\left(e-1+\frac{1}{t}\log \widetilde{\Delta}(\mathscr{P}_t)\right).\]

If $y \to x$, then $d_P (x \to y) \le k_{(\mathcal{X}, P)}$ by the definition of $k_{(\mathcal{X}, P)}$,
and hence there exist $\ell \le k_{(\mathcal{X}, P)}$ and $x_1,\dots, x_\ell \in \mathcal{X}$
such that $x \to x_1\to \cdots \to x_\ell = y$.
Hence,
\[ \max_{y:\,y\to x}\left(\log f(x)-\log f(y)\right) \le k_{(\mathcal{X}, P)}\left(\log\Delta(P^*)+\log\left(e-1+\frac{1}{t}\log \widetilde{\Delta}(\mathscr{P}_t)\right) \right). \qedhere \]
\end{proof} 

We give an upper bound for $\Delta(P^*)$. 

\begin{Lem}\label{lem:delta-star}
$\Delta(P^*) \le \Delta(P)^{k_{(\mathcal{X}, P)}+1}$.
\end{Lem}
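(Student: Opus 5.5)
We need $1/P^*(x,y) = \pi(x)/(\pi(y)P(y,x)) \le \Delta(P)^{k+1}$ whenever $P(y,x)>0$ and $x\ne y$. Since $1/P(y,x)\le \Delta(P)$, the plan is to show that $\pi(x)/\pi(y)\le \Delta(P)^{k}$ whenever $y\to x$.

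The ratio $\pi(x)/\pi(y)$ is controlled via stationarity along a path from $x$ back to $y$. By the definition of $k_{(\mathcal{X},P)}$, if $y\to x$ then $d_P(x\to y)=\ell\le k_{(\mathcal{X},P)}$. So there is a shortest path $x=x_0\to x_1\to\cdots\to x_\ell=y$ with consecutive points distinct. Stationarity $\pi = \pi P$ gives, for each $i$,
\[ \pi(x_{i+1}) = \sum_z \pi(z)P(z,x_{i+1}) \ge \pi(x_i)P(x_i,x_{i+1}) \ge \frac{\pi(x_i)}{\Delta(P)}, \]
using $x_i\ne x_{i+1}$ and $P(x_i,x_{i+1})>0$, which is exactly what $\Delta(P)$ bounds. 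Iterating along the path yields
\[ \pi(y)\ge \Delta(P)^{-\ell}\pi(x)\ge \Delta(P)^{-k_{(\mathcal{X},P)}}\pi(x), \]
where the last step uses $\Delta(P)\ge 1$, true since all entries are at most $1$.

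Combining the two bounds,
\[ \frac{1}{P^*(x,y)} = \frac{\pi(x)}{\pi(y)}\cdot\frac{1}{P(y,x)} \le \Delta(P)^{k_{(\mathcal{X},P)}}\Delta(P), \]
and taking the maximum over admissible pairs gives the claim. There is no real obstacle. The only care needed is to use off-diagonal entries only, so that $\Delta(P)$ rather than $\widetilde{\Delta}(P)$ appears, and to note that $\ell\le k_{(\mathcal{X},P)}$ follows directly from the definition applied to the pair $(y,x)$ with $d_P(y\to x)=1$.
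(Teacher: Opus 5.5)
Your proof is correct and is essentially the paper's argument. The paper uses the same shortest return path $x=x_0\to\cdots\to x_\ell=y$ with $\ell\le k_{(\mathcal{X},P)}$, but expresses the comparison of $\pi(x)$ and $\pi(y)$ as $(P^*)^\ell(y,x)\le 1$ via the identity $(P^*)^\ell(x,y)=\frac{\pi(y)}{\pi(x)}P^\ell(y,x)$; this is the same inequality $\pi(y)\ge \pi(x)P^\ell(x,y)$ that you get by applying $\pi=\pi P$ one step at a time, and your version is slightly more direct because it skips the induction on powers of the adjoint.
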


\begin{proof} 
If $P^*(x,y) > 0$ and $x \ne y$, then $y\to x$ and hence there exist
\(x = x_0,x_1,\dots,x_\ell = y, \ \ell \le k_{(\mathcal{X}, P)}, \)
such that \(x_i \to x_{i+1}\) for every  \(i \in \{0,\dots,\ell-1\} \). 
Assume this is a shortest path, that is, $\ell = d_P (x \to y)$.

By induction on $\ell$, 
we see that for all $\ell \in \mathbb{N}$,
\[ (P^*)^\ell(x,y)=\frac{\pi(y)}{\pi(x)}\,P^\ell(y,x), \ x, y \in \mathcal{X}. \]
Hence, 
\[ P^*(x,y)(P^*)^\ell(y,x) = \frac{\pi(y)}{\pi(x)}P(y,x)\frac{\pi(x)}{\pi(y)}P^\ell(x,y) = P(y,x)\,P^\ell(x,y). \]

Hence, 
\[ \frac{1}{P^*(x,y)} \le \frac{1}{P^*(x,y)(P^*)^\ell(y,x)} = \frac{1}{P(y,x)\,P^\ell(x,y)} \]
\[ \le \frac{1}{P(y,x)}\frac{1}{P(x_0,x_1)} \cdots \frac{1}{P(x_{\ell-1},y)} \le \Delta(P)^{\ell+1}.\]
Since $\Delta(P)\ge 1$ and $\ell\le k_{(\mathcal{X}, P)}$,
\[ \frac{1}{P^*(x,y)} \le \Delta(P)^{k_{(\mathcal{X}, P)}+1}. \qedhere \]
\end{proof}

Let
\[ \mathrm{diam}(\mathcal{X}) \coloneqq \max\left\{d_{P}(x \to y) \,\middle|\, x, y \in \mathcal{X}\right\}, \]
which is finite since $(\mathcal{X}, P)$ is irreducible.
For the $n$-th chain $\mathcal{X}_{(n)}$ of a sequence of finite irreducible Markov chains as in Section \ref{subsec:framework},
we denote by $\mathrm{diam}(\mathcal{X}_{(n)})$ the quantity $\mathrm{diam}(\mathcal{X})$ defined for $(\mathcal{X}_{(n)}, P_{(n)})$.

We give an upper bound for $\widetilde{\Delta}(\mathscr{P}_t)$.
Here and in what follows, we denote by $\mathbb{P}$ and $\mathbb{E}$ the probability and the expectation on a probability space on which the random variables under consideration are defined.
We first prepare an estimate for the Poisson distribution.
It is the inequality $m(D) < D$ for the median $m(D)$ of the gamma distribution with parameter $D$, which is due to Chen and Rubin \cite{ChenRubin1986}; the proof below follows \cite{ChenRubin1986, BergPedersen2006}. 

\begin{Lem}\label{lem:poisson-median}
Let $D \in \mathbb{N}$ and $\lambda \ge D$, and let $N$ be a Poisson random variable with mean $\lambda$.
Then $\mathbb{P}(N \ge D) > 1/2$.
\end{Lem}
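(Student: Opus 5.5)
The plan is to reduce to a statement about gamma distributions and then compare the gamma density at points symmetric about its mode, following Chen and Rubin. The reduction uses the Poisson–gamma duality: if $G_D$ is a gamma random variable with shape $D$ and scale $1$ (a sum of $D$ independent standard exponentials), then
\[
\mathbb{P}(N \ge D) = \mathbb{P}(G_D \le \lambda).
\]
This holds because $N$ counts the points of a rate-one Poisson process on $[0,\lambda]$, and $G_D$ is the arrival time of the $D$-th point. Since the gamma distribution function is non-decreasing in $\lambda$, it suffices to treat $\lambda = D$, that is, to show $\mathbb{P}(G_D \le D) > 1/2$, or equivalently that the median of $G_D$ is strictly less than $D$.

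For the gamma statement, write the density $g(x) = x^{D-1}e^{-x}/(D-1)!$. The mean is $D$ and the mode is $D-1$. The case $D=1$ is direct: $\mathbb{P}(G_1 \le 1) = 1 - e^{-1} > 1/2$. For $D \ge 2$ I would follow Berg and Pedersen and compare the density at points $D-u$ and $D+u$ for $u \in (0, D)$. Set
\[
h(u) = \log g(D-u) - \log g(D+u) = (D-1)\log\frac{D-u}{D+u} + 2u .
\]
Then $h(0)=0$, and one computes
\[
h'(u) = 2 - \frac{2D(D-1)}{D^2-u^2}.
\]
Hence $h$ first decreases and then increases, so its sign is not uniform; a pointwise comparison alone will not settle it.

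The route I would therefore take is a tail comparison. Write $\mathbb{P}(G_D \le D) - \mathbb{P}(G_D > D)$ as an integral of $g(D-u) - g(D+u)$ over $u \in (0,D)$, minus the extra mass $\mathbb{P}(G_D > 2D)$. Alternatively, use the classical identity
\[
\mathbb{P}(N \ge D) - \tfrac12 = \tfrac12\bigl(\mathbb{P}(N \ge D) - \mathbb{P}(N \le D-1)\bigr)
\]
for $N$ Poisson with mean $D$, and show directly that
\[
\sum_{j \ge D} \frac{D^j}{j!} > \sum_{j < D} \frac{D^j}{j!}.
\]
Pairing $j = D+i$ with $j = D-1-i$, the ratio of terms is
\[
\frac{D^{2i+1}\,(D-1-i)!}{(D+i)!} = \prod_{m=-i}^{i} \frac{D}{D+m},
\]
and pairing $m$ with $-m$ gives factors $D^2/(D^2-m^2) \ge 1$. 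So each upper term dominates its paired lower term, for $i = 0, \dots, D-1$. The terms with $j \ge 2D$ are strictly positive and left over, which gives the strict inequality.

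The main obstacle is that the clean pairing argument must be checked carefully: the product includes the unpaired factor $m=0$, which equals $1$, and the index ranges must match. If it does, this elementary discrete argument replaces the analytic Chen–Rubin proof, and the monotonicity in $\lambda$ from the first paragraph finishes the lemma.
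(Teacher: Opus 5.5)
Your proof is correct, and its key step differs from the paper's.

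The reduction to $\lambda = D$ is essentially the same. The paper writes $\mathbb{P}(N \ge D) = \frac{1}{(D-1)!}\int_0^\lambda e^{-s}s^{D-1}\,ds$, which is your Poisson--gamma duality, and uses that this is non-decreasing in $\lambda$.

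For $\lambda = D$, however, the paper stays continuous. The substitution $s = De^{-u}$ turns $\int_0^D e^{-s}s^{D-1}\,ds$ and $\int_D^\infty e^{-s}s^{D-1}\,ds$ into $D^D\int_0^\infty e^{-D(e^{-u}+u)}\,du$ and $D^D\int_0^\infty e^{-D(e^{u}-u)}\,du$. The pointwise inequality $e^{u}-u > e^{-u}+u$ for $u>0$ (that is, $\sinh u > u$) then shows the first integral exceeds half of $(D-1)!$.

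You instead compare Poisson weights directly, and the pairing you were unsure about does check out:
\begin{itemize}
\item For $i = 0,\dots,D-1$, the indices $j = D+i$ and $j = D-1-i$ together exhaust $\{0,\dots,2D-1\}$.
\item The ratio of paired terms is $D^{2i+1}/\prod_{m=-i}^{i}(D+m)$. This is at least $1$ by AM--GM applied to the $2i+1$ integers $D-i,\dots,D+i$, whose mean is $D$; they are all positive because $i \le D-1$.
\item The leftover terms with $j \ge 2D$ are strictly positive, which gives the strict inequality.
\end{itemize}

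What each route buys: yours is entirely elementary, needing only AM--GM and no change of variables. It is tied to integer $D$, but that is all the lemma requires. The paper's substitution works verbatim for any real shape parameter $D>0$, so it yields the general Chen--Rubin bound that the gamma median is below its mean.

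You can delete the exploratory density comparison in your second paragraph. There you also have the shape of $h$ reversed: for $D \ge 2$, $h'(0) = 2/D > 0$ and $h'(u) \to -\infty$ as $u \to D$. So $h$ first increases and then decreases, although your conclusion that its sign is not uniform is still right.
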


\begin{proof}
For $\mu \ge 0$, let $g(\mu) \coloneqq 1 - \sum_{j=0}^{D-1} e^{-\mu} \frac{\mu^{j}}{j!}$, so that $\mathbb{P}(N \ge D) = g(\lambda)$.
Then $g(0) = 0$ and $g'(\mu) = e^{-\mu} \frac{\mu^{D-1}}{(D-1)!}$, and hence, by $\lambda \ge D$,
\[ \mathbb{P}(N \ge D) = \frac{1}{(D-1)!} \int_{0}^{\lambda} e^{-s} s^{D-1}\, ds
 \ge \frac{1}{(D-1)!} \int_{0}^{D} e^{-s} s^{D-1}\, ds. \]
By the substitution $s = D e^{-u}$,
\[ \int_{0}^{D} e^{-s} s^{D-1}\, ds = D^{D} \int_{0}^{\infty} e^{-D(e^{-u} + u)}\, du \]
and
\[ \int_{D}^{\infty} e^{-s} s^{D-1}\, ds = D^{D} \int_{-\infty}^{0} e^{-D(e^{-u} + u)}\, du = D^{D} \int_{0}^{\infty} e^{-D(e^{u} - u)}\, du. \]
Since $e^{u} - u > e^{-u} + u$ for $u > 0$, the former is larger than the latter, and hence
\[ \int_{0}^{D} e^{-s} s^{D-1}\, ds > \frac{1}{2} \int_{0}^{\infty} e^{-s} s^{D-1}\, ds = \frac{(D-1)!}{2}. \qedhere \]
\end{proof}

The following corresponds to  \cite[Eq. (16)]{Salez2024}. 

\begin{Lem}\label{lem:Pt-upper}
Let $\theta \in (0, 1/2]$ and $t \ge \theta\,\mathrm{diam}(\mathcal{X})$. Then
\[ \widetilde{\Delta}(\mathscr{P}_t) \le 2 \left(\frac{\Delta(P)}{\theta}\right)^{\mathrm{diam}(\mathcal{X})}. \]
\end{Lem}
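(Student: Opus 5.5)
We need a lower bound on $\mathscr{P}_t(x,y)$ for every pair with $\mathscr{P}_t(x,y)>0$. Since $t>0$ and the chain is irreducible, this holds for every pair, so it suffices to show that
\[ \mathscr{P}_t(x,y) \ge \tfrac12 \left(\theta/\Delta(P)\right)^{D}, \qquad D \coloneqq \mathrm{diam}(\mathcal{X}), \]
for all $x,y\in\mathcal{X}$. The plan is to represent the continuous-time chain as the discrete chain run at the jump times of a Poisson process of rate one, so that $\mathscr{P}_t(x,y)=\mathbb{E}[P^{N}(x,y)]$ with $N$ Poisson of mean $t$. We then force the walk to follow a shortest path from $x$ to $y$ during the jumps it actually makes.

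Fix $x,y$, let $m=d_P(x\to y)\le D$, and choose a shortest path $x=x_0\to x_1\to\cdots\to x_m=y$. The difficulty is that $N$ is random, and $P^n(x,y)$ need not be large for $n\neq m$ because $P$ may have no holding probability. To get around this, we thin the Poisson jumps. The jump times lying in $[0,t]$ form a Poisson process, and we condition on the event that exactly $m$ of the first $N$ jumps are "effective" in a suitable sense.

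A cleaner route is as follows. Since $P^n(x,y)\ge$ (probability of following the path), we want a bound on a mixture, and it is easier to pick a time window. Because $\mathscr{P}_t=\mathscr{P}_{t-s}\mathscr{P}_s$, we only need $\mathscr{P}_s(x,y)$ at a suitable time, since the outside factor is a probability. That alone does not help, however, since $\mathscr{P}_{t-s}$ mixes.

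Instead, I use the structure $\mathscr{P}_t(x,y)\ge \sum_{n}\mathbb{P}(N=n)P^n(x,y)$ together with a path of length exactly $n$. For $n\ge m$, I would like to extend the path, but without laziness it cannot be padded. So I split time differently: write $t=t_1+\cdots+t_m$ with each $t_i=t/m\ge\theta$, and use the semigroup property
\[ \mathscr{P}_t(x,y)\ge \prod_{i=1}^{m}\mathscr{P}_{t/m}(x_{i-1},x_i). \]
Each factor satisfies $\mathscr{P}_{s}(u,v)\ge e^{-s}sP(u,v)$ from the $n=1$ term, which gives $e^{-t}(t/m)^m\Delta(P)^{-m}$. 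The factor $e^{-t}$ is too lossy when $t$ is large, though.

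Hence I would take the following approach. Set $s=\theta m\le t$ (or $s=\theta D$ scaled to the path) and write $\mathscr{P}_t(x,y)=\sum_z\mathscr{P}_{t-s}(x,z)\mathscr{P}_s(z,y)$. This reduces the problem to bounding $\min_z\mathscr{P}_s(z,y)$ for $s=\theta D$, using $\mathscr{P}_s(z,y)\ge e^{-s}\frac{s^{d}}{d!}\Delta(P)^{-d}$ with $d=d_P(z\to y)\le D$. The quantities $s^d/d!$ and $e^{-s}$ are then controlled by $\theta\le1/2$.

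The main obstacle is producing exactly the constant $2(\Delta/\theta)^D$. My first attempt, to be checked, is as follows. Writing $\mathbb{P}(N_s=d)$ via a gamma integral, Lemma \ref{lem:poisson-median} gives $\mathbb{P}(N_{t}\ge D)>1/2$ when $t\ge D$. I would condition on $N\ge d$ and use uniform order statistics of the jump times to show that, with conditional probability at least $\theta^{d}$, the first $d$ jumps land so that the walk traverses the path and stays put afterward in the sense of the adjoint/backward decomposition, i.e.\ decomposing at the last $d$ jumps before $t$ and using stationarity. Expect fiddling to get $\theta^D/2$.
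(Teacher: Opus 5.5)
Your proposal is incomplete as written, but the route in your paragraph beginning ``Hence I would take the following approach'' is sound and yields a complete proof that differs from the paper's. The one inequality that route hinges on is only asserted (``controlled by $\theta\le 1/2$''), and you then treat it as the ``main obstacle''.

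The fallback you sketch for that obstacle does not work:
\begin{itemize}
\item Your $N_t$ has mean $t$, and the hypothesis only gives $t\ge\theta D$, so Lemma \ref{lem:poisson-median} does not apply.
\item ``Traverses the path and stays put afterward'' is impossible when $P(y,y)=0$ and further jumps occur. This is exactly the obstruction you had already identified.
\item Decomposing at the last $d$ jumps leaves the walk at an uncontrolled point, from which $y$ need not be reachable in exactly $d$ steps.
\item Stationarity of $\pi$ says nothing about a walk started at $x$.
\end{itemize}

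The missing step in your own route is elementary. With $s=\theta D\le t$, a shortest path from $z$ to $y$ and $\Delta(P)\ge 1$ give
\[ \mathscr{P}_t(x,y)\ \ge\ \min_{z}\mathscr{P}_s(z,y)\ \ge\ \Delta(P)^{-D}\min_{0\le d\le D} p(d), \qquad p(d)=e^{-s}\frac{s^{d}}{d!}. \]
Since $p(d+1)/p(d)=s/(d+1)$, the sequence $p$ is unimodal. Its minimum over $\{0,\dots,D\}$ is therefore attained at $d=0$ or $d=D$.
\begin{itemize}
\item For $d=0$: $e^{-\theta}\ge 1-\theta\ge\theta$, so $p(0)\ge\theta^{D}$.
\item For $d=D$: $p(D)=\theta^{D}e^{-\theta D}D^{D}/D!\ge\theta^{D}b_D$ with $b_D=e^{-D/2}D^{D}/D!$. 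Since $b_{D+1}/b_D=e^{-1/2}(1+1/D)^{D}\ge 2e^{-1/2}>1$, you get $b_D\ge b_1=e^{-1/2}>1/2$.
\end{itemize}
Hence $\mathscr{P}_t(x,y)\ge\frac12(\theta/\Delta(P))^{D}$ for all $x,y$, which is the claim.

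The paper does not split the semigroup. It writes $\mathscr{P}_t=e^{(t/\theta)(\widehat{P}-I)}$ with the lazy matrix $\widehat{P}=(1-\theta)I+\theta P$, i.e.\ a Poisson mixture, with mean $t/\theta\ge D$, of powers of $\widehat{P}$. The holding probability $1-\theta\ge\theta/\Delta(P)$ lets a shortest path be padded to length exactly $D$. This gives $\widehat{P}^{\,j}(x,y)\ge(\theta/\Delta(P))^{D}$ for every $j\ge D$, and the factor $1/2$ comes from $\mathbb{P}(N\ge D)>1/2$ via Lemma \ref{lem:poisson-median}.

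This is your ``effective jumps'' idea from the second paragraph, made precise: view the rate-one clock as a $\theta$-thinning of a rate-$1/\theta$ clock, rather than thinning the rate-one jumps. Once completed, your route needs only a single Poisson term and the semigroup property, and avoids the Chen--Rubin median bound. The paper's route gets the constant from a clean probabilistic fact and controls all $\widehat{P}^{\,j}$, $j\ge D$, uniformly.
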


\begin{proof}
Let $D \coloneqq \mathrm{diam}(\mathcal{X})$.
If $D = 0$, then $|\mathcal{X}| = 1$ and $\widetilde{\Delta}(\mathscr{P}_t) = 1$, and the assertion holds.
Assume that $D \ge 1$.
Let $\widehat{P} \coloneqq (1-\theta) I + \theta P$.
Then $\widehat{P}$ is a stochastic matrix on $\mathcal{X}$.
Since $\widehat{P} - I = \theta (P - I)$ and $\widehat{P}$ commutes with $I$,
\[ \mathscr{P}_t = e^{t(P-I)} = e^{(t/\theta)(\widehat{P} - I)} = \sum_{j=0}^{\infty} e^{-t/\theta} \frac{(t/\theta)^{j}}{j!}\, \widehat{P}^{\,j}. \]
Let $x, y \in \mathcal{X}$ and $\ell \coloneqq d_{P}(x \to y) \le D$.
There exist $x = x_0, x_1, \dots, x_\ell = y$ such that $x_i \to x_{i+1}$ for every $i \in \{0, \dots, \ell-1\}$,
and hence $\widehat{P}(x_i, x_{i+1}) = \theta P(x_i, x_{i+1}) \ge \theta/\Delta(P)$.
Since $\widehat{P}(y,y) \ge 1 - \theta \ge \theta/\Delta(P)$ by $\theta \le 1/2$ and $\Delta(P) \ge 1$,
\[ \widehat{P}^{\,D}(x,y) \ge \prod_{i=0}^{\ell-1} \widehat{P}(x_i, x_{i+1}) \cdot \widehat{P}(y,y)^{D-\ell} \ge \left(\frac{\theta}{\Delta(P)}\right)^{D}. \]
For $j \ge D$, since $\widehat{P}^{\,j-D}$ is a stochastic matrix,
\[ \widehat{P}^{\,j}(x,y) = \sum_{z \in \mathcal{X}} \widehat{P}^{\,j-D}(x,z)\, \widehat{P}^{\,D}(z,y) \ge \left(\frac{\theta}{\Delta(P)}\right)^{D}. \]
Let $N$ be a Poisson random variable with mean $t/\theta$.
Then
\[ \mathscr{P}_t(x,y) \ge \sum_{j \ge D} e^{-t/\theta} \frac{(t/\theta)^{j}}{j!} \left(\frac{\theta}{\Delta(P)}\right)^{D}
 = \mathbb{P}(N \ge D) \left(\frac{\theta}{\Delta(P)}\right)^{D}. \]
Since $t/\theta \ge D$, Lemma \ref{lem:poisson-median} yields $\mathbb{P}(N \ge D) \ge 1/2$, and hence
$\mathscr{P}_t(x,y) \ge \frac{1}{2} \left(\theta/\Delta(P)\right)^{D}$.
Since $x, y \in \mathcal{X}$ are arbitrary, the assertion follows.
\end{proof}

For $\theta = 1/4$, Lemma \ref{lem:Pt-upper} is the estimate in \cite[Eq. (16)]{Salez2024}, and the proof above follows the one there.
We use it with $\theta = 1/(k_{(\mathcal{X}, P)}+3)$; see the proof of Proposition \ref{thm:7.2} below, where the condition $t \ge \mathrm{diam}(\mathcal{X})/(k_{(\mathcal{X}, P)}+3)$ is derived from Proposition \ref{prop:6.3}.

By Lemma \ref{lem:log-Pt-Lip}, Lemma \ref{lem:delta-star} and Lemma \ref{lem:Pt-upper}, we obtain the following assertion, which corresponds to \cite[Lemma 10]{Salez2024}. 

\begin{Prop}[logarithmic gradient estimate]\label{thm:3.14}
If $t \ge \mathrm{diam}(\mathcal{X})/(k_{(\mathcal{X}, P)}+3)$, then for every $o \in \mathcal{X}$,
\[ \left\|\log\frac{\mathscr{P}_t (o,\cdot)}{\pi(\cdot)}\right\|_{\mathrm{Lip}} \le k_{(\mathcal{X}, P)} \left(k_{(\mathcal{X}, P)}+1\right) \log\Delta(P) \]
\[ + k_{(\mathcal{X}, P)} \log\left(e - 1 + (k_{(\mathcal{X}, P)}+3) \log\left(2 (k_{(\mathcal{X}, P)}+3) \Delta(P)\right)\right). \]
In particular,
\[ \left\|\log\frac{\mathscr{P}_t (o,\cdot)}{\pi(\cdot)}\right\|_{\mathrm{Lip}} \le 3k_{(\mathcal{X}, P)}^2(\log\Delta(P) + 4). \]
\end{Prop}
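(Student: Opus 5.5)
Throughout, write $k \coloneqq k_{(\mathcal{X}, P)}$, $D \coloneqq \mathrm{diam}(\mathcal{X})$ and $L \coloneqq \log \Delta(P) \ge 0$. The plan is to substitute Lemma \ref{lem:delta-star} and Lemma \ref{lem:Pt-upper} into Lemma \ref{lem:log-Pt-Lip}, using that the right-hand side there is non-decreasing in $\log \Delta(P^*)$ and in $\log \widetilde{\Delta}(\mathscr{P}_t)/t$. The second inequality is then elementary.

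\emph{Step 1: the first term.} By Lemma \ref{lem:delta-star}, $\log \Delta(P^*) \le (k+1)L$. Multiplied by the prefactor $k$ from Lemma \ref{lem:log-Pt-Lip}, this gives the term $k(k+1)\log\Delta(P)$.

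\emph{Step 2: the second term.} Apply Lemma \ref{lem:Pt-upper} with $\theta = 1/(k+3)$. Since $k \ge 1$, we have $\theta \le 1/4 \le 1/2$, and the hypothesis $t \ge D/(k+3)$ is exactly $t \ge \theta D$. This yields
\[ \log \widetilde{\Delta}(\mathscr{P}_t) \le \log 2 + D\log\left((k+3)\Delta(P)\right). \]
Since $|\mathcal{X}| \ge 2$ and the chain is irreducible, $D \ge 1$. Dividing by $t \ge D/(k+3)$ then gives
\[ \frac{\log \widetilde{\Delta}(\mathscr{P}_t)}{t} \le (k+3)\left(\frac{\log 2}{D} + \log\left((k+3)\Delta(P)\right)\right) \le (k+3)\log\left(2(k+3)\Delta(P)\right). \]
Inserting this into Lemma \ref{lem:log-Pt-Lip} produces the stated first bound.

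\emph{Step 3: the simplified bound.} Use $k(k+1)L \le 2k^2 L$. It then suffices to show
\[ \log\left(e-1+(k+3)\left(\log(2(k+3)) + L\right)\right) \le kL + 12k, \]
because multiplying by $k$ gives at most $k^2 L + 12k^2$, and the total is at most $3k^2(L+4)$. To prove this, note that $e-1+(k+3)(\log(2(k+3))+L) \le (k+3)\left(e-1+\log(2(k+3))\right)(1+L)$. Taking logarithms and using $\log(1+L) \le L \le kL$, it remains to check
\[ \log(k+3) + \log\left(e-1+\log(2(k+3))\right) \le 12k \quad \text{for all } k \ge 1. \]
At $k=1$ the left side is about $1.39 + 1.33 < 12$. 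The left side grows only logarithmically in $k$ while the right side grows linearly, so the inequality holds for all $k \ge 1$.

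The only delicate point is Step 2: one must verify that the choice of $\theta$ is admissible in Lemma \ref{lem:Pt-upper} and use $D \ge 1$ to absorb $\log 2/D$. Everything else is monotonicity together with the crude numeric estimate in Step 3.
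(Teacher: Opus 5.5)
Your proposal is correct and follows the paper's proof: Lemma~\ref{lem:delta-star} and Lemma~\ref{lem:Pt-upper} with $\theta = 1/(k+3)$ are substituted into Lemma~\ref{lem:log-Pt-Lip}, and $D \ge 1$ is used to absorb $\log 2/t$. For the simplified bound you factor the argument of the logarithm as $(k+3)\left(e-1+\log(2(k+3))\right)(1+L)$ instead of using the paper's concavity step $\log(a+b) \le \log a + b/a$; this works equally well, but your closing ``logarithmic versus linear growth'' remark should be made precise, for example by noting that the left-hand side has derivative at most $1/2$ in $k$, so the check at $k=1$ suffices.
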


\begin{proof}
Let $k \coloneqq k_{(\mathcal{X}, P)}$, $\Delta \coloneqq \Delta(P)$ and $D \coloneqq \mathrm{diam}(\mathcal{X})$.
Since $k \ge 1$, there exist $x, y \in \mathcal{X}$ with $d_{P}(x \to y) = 1$, and hence $D \ge 1$.
Let $\theta \coloneqq 1/(k+3) \in (0, 1/4]$.
Then $t \ge \theta D$, and by Lemma \ref{lem:Pt-upper}, $\widetilde{\Delta}(\mathscr{P}_t) \le 2 ((k+3)\Delta)^{D}$.
Since $1/t \le (k+3)/D \le k+3$,
\[ \frac{1}{t} \log \widetilde{\Delta}(\mathscr{P}_t) \le \frac{\log 2}{t} + \frac{D}{t} \log((k+3)\Delta) \le (k+3) \log\left(2(k+3)\Delta\right). \]
By this, Lemma \ref{lem:log-Pt-Lip} and Lemma \ref{lem:delta-star}, the first assertion follows.

We show the second assertion.
Let $a \coloneqq e - 1 + (k+3)\log(2(k+3))$ and $b \coloneqq (k+3) \log \Delta$.
Then $b \ge 0$, and $a \ge (k+3) \log 8 \ge k+3$.
By the concavity of $\log$, we have $\log(a + b) \le \log a + b/a \le \log a + \log \Delta$.
Since $\log y \le y - 1$ for $y > 0$, we have $\log(2(k+3)) \le 2(k+3) - 1$, and hence $a \le 2(k+3)^{2} - (k+3) + e - 1 \le 2(k+3)^{2}$.
Therefore, by $\log(k+3) \le k+2$,
\[ \log a \le \log 2 + 2\log(k+3) \le \log 2 + 2(k+2) \le 12k. \]
By these estimates and the first assertion,
\begin{align*} 
\left\|\log\frac{\mathscr{P}_t (o,\cdot)}{\pi(\cdot)}\right\|_{\mathrm{Lip}} 
&\le k \left( (k+1)\log\Delta + \log\Delta + 12k \right) \\
&\le k \left( 3k \log\Delta + 12k \right) = 3k^{2}(\log\Delta + 4). \qedhere 
\end{align*}
\end{proof}

Now we obtain that 
\begin{Prop}\label{thm:5.1}
Assume the Bakry--\'Emery curvature is non-negative. 
Let $\varepsilon \in (0,1/2)$.
Assume that $(k_{(\mathcal{X}, P)}+3)\,t_{\mathrm{mix}}(1-\varepsilon) \ge \mathrm{diam}(\mathcal{X})$.
Then
\[ t_{\mathrm{mix}}(\varepsilon)-t_{\mathrm{mix}}(1-\varepsilon) \le \frac{2t_{\mathrm{rel}}}{\varepsilon^2} \left(1 + 3k_{(\mathcal{X}, P)}^2 \left(\log\Delta(P)+4\right)\sqrt{t_{\mathrm{mix}}(1-\varepsilon)}\right). \]
\end{Prop}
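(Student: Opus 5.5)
The plan is to combine Theorem \ref{thm:Thm5Salez} with a bound on $V_{\mathrm{KL}}^{*}(t)$ at $t = t_{\mathrm{mix}}(1-\varepsilon)$. By Theorem \ref{thm:Thm5Salez}, it suffices to show
\[ \sqrt{V_{\mathrm{KL}}^{*}(t)} \le 3k_{(\mathcal{X}, P)}^2\left(\log\Delta(P)+4\right)\sqrt{t}, \qquad t \coloneqq t_{\mathrm{mix}}(1-\varepsilon). \]
Equivalently, for every $o \in \mathcal{X}$ we need
\[ V_{\mathrm{KL}}(\mathscr{P}_t(o,\cdot)\|\pi) \le t\, L^2, \qquad L \coloneqq 3k^2(\log\Delta(P)+4), \]
where $k = k_{(\mathcal{X}, P)}$.

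Fix $o$ and set $f \coloneqq \log\left(\mathscr{P}_t(o,\cdot)/\pi(\cdot)\right)$. This is well defined since $t>0$; if $t = 0$, the hypothesis forces $\mathrm{diam}(\mathcal{X}) = 0$, which is excluded because $|\mathcal{X}| \ge 2$. By definition, $V_{\mathrm{KL}}(\mathscr{P}_t(o,\cdot)\|\pi)$ is the variance of $f$ under the measure $\mathscr{P}_t(o,\cdot)$, with $d_{\mathrm{KL}}$ as its mean. That variance equals $\mathscr{P}_t(f^2)(o) - (\mathscr{P}_t f)(o)^2$.

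The two ingredients are then as follows.
\begin{itemize}
\item The hypothesis $(k+3)t \ge \mathrm{diam}(\mathcal{X})$ is exactly the condition in Proposition \ref{thm:3.14}, which gives $\|f\|_{\mathrm{Lip}} \le L$.
\item Lemma \ref{lem:semigroup}, valid under non-negative curvature, bounds $\mathscr{P}_t(f^2)(o) - (\mathscr{P}_t f)(o)^2$ by $t\|f\|_{\mathrm{Lip}}^2$.
\end{itemize}
Combining them gives $V_{\mathrm{KL}} \le tL^2$.

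One subtlety needs care. In Lemma \ref{lem:semigroup}, $f$ is a fixed function and the semigroup is applied to it. Here $f$ depends on $t$ and $o$, but that does not matter, because the lemma holds for every function and every time. We just apply it with the same $t$ and evaluate at the point $o$.

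Taking square roots and the maximum over $o$ gives $\sqrt{V_{\mathrm{KL}}^*(t)} \le L\sqrt{t}$. Inserting this into Theorem \ref{thm:Thm5Salez} yields the claim. I expect no real obstacle beyond checking that the Lipschitz seminorm used in Lemma \ref{lem:semigroup} matches the one in Proposition \ref{thm:3.14}. Both use the directed seminorm $\|\cdot\|_{\mathrm{Lip}}$ over pairs with $d_P(u\to v)\le 1$, so they agree.
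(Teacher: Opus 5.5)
Your proposal is correct and matches the paper's proof: you combine Theorem \ref{thm:Thm5Salez} with $V_{\mathrm{KL}}^{*}(t)\le t\,\max_{o}\|f_o\|_{\mathrm{Lip}}^2$, which comes from Lemma \ref{lem:semigroup} once $V_{\mathrm{KL}}$ is read as the variance of $f_o$ under $\mathscr{P}_t(o,\cdot)$, and with the gradient bound of Proposition \ref{thm:3.14}, whose hypothesis is exactly $(k_{(\mathcal{X},P)}+3)\,t\ge\mathrm{diam}(\mathcal{X})$. You also note that this hypothesis forces $t_{\mathrm{mix}}(1-\varepsilon)>0$ because $\mathrm{diam}(\mathcal{X})\ge 1$; this is a small point the paper leaves implicit.
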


\begin{proof}
By Theorem \ref{thm:Thm5Salez},
\[ t_{\mathrm{mix}}(\varepsilon)-t_{\mathrm{mix}}(1-\varepsilon) \le \frac{2}{\varepsilon^2}\,t_{\mathrm{rel}} \left(1+\sqrt{V_{\mathrm{KL}}^{*}\!\left(t_{\mathrm{mix}}(1-\varepsilon)\right)}\right).\]

By Lemma \ref{lem:semigroup} and Proposition \ref{thm:3.14}, 
it holds that for $t \ge \mathrm{diam}(\mathcal{X})/(k_{(\mathcal{X}, P)}+3)$ and for $f_o=\log\left(\frac{\mathscr{P}_{t}(o,\cdot)}{\pi(\cdot)}\right)$, 
\[ V_{\mathrm{KL}}^{*}(t) \le \max_{o}\left\|\mathscr{P}_{t}(f_o^2)-(\mathscr{P}_{t} f_o)^2\right\|_\infty  \le t\,\max_{o}\|f_o\|_{\mathrm{Lip}}^2 \le t\left(3k_{(\mathcal{X}, P)}^2\left(\log\Delta(P)+4\right)\right)^2. \qedhere \]
\end{proof}

\subsection{Diameter bound}\label{subsec:diameter}

The following corresponds to \cite[Lemma 11]{Salez2024}. 

\begin{Prop}\label{prop:6.3}
For every $\varepsilon \in (0,1)$,
\[ \mathrm{diam}(\mathcal{X}) \le (1+k_{(\mathcal{X}, P)})\left(t_{\mathrm{mix}}(\varepsilon)+\sqrt{\frac{2t_{\mathrm{mix}}(\varepsilon)}{1-\varepsilon}}+k_{(\mathcal{X}, P)}\sqrt{\frac{t_{\mathrm{rel}}}{1-\varepsilon}}\right).\]
\end{Prop}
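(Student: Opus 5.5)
Fix $x,y$ with $d_P(x\to y)=D:=\mathrm{diam}(\mathcal{X})$ and follow Salez's strategy: test a Lipschitz distance function against $\mathscr{P}_t(x,\cdot)$ at $t=t_{\mathrm{mix}}(\varepsilon)$. Take $f(z)\coloneqq d_P(x\to z)$, or rather its symmetrized version. The relevant Lipschitz property for the local concentration estimate (Lemma \ref{lem:semigroup}) uses $\|f\|_{\mathrm{Lip}}$, which controls $|f(u)-f(v)|$ whenever $u\to v$. For $f=d_P(x\to\cdot)$ we have $f(v)\le f(u)+1$ if $u\to v$, and $f(u)\le f(v)+k$ since $d_P(v\to u)\le k\coloneqq k_{(\mathcal{X},P)}$. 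Hence $\|f\|_{\mathrm{Lip}}\le k$, and this bound, together with $\Gamma(f)\le k^2/2$, produces the factor $k$ in front of $\sqrt{t_{\mathrm{rel}}}$.

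\textbf{Step 1: the chain moves slowly.} Under $\mathscr{P}_t(x,\cdot)$, $f(X_t)\le N_t$, where $N_t$ is the Poisson number of jumps, because every jump increases $f$ by at most $1$. Chebyshev then puts most of the mass of $f(X_t)$ below $t+\sqrt{2t/(1-\varepsilon)}$: more precisely, $\mathbb{P}(N_t> t+\sqrt{2t/(1-\varepsilon)})\le (1-\varepsilon)/2$.

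\textbf{Step 2: $\pi$ puts mass on large $f$.} Since $t=t_{\mathrm{mix}}(\varepsilon)$, we have $\mathscr{P}_t(x,A)\ge \pi(A)-\varepsilon$ for every set $A$. Taking $A=\{f\le t+\sqrt{2t/(1-\varepsilon)}\}$ gives $\pi(A)\ge (1-\varepsilon)/2+\ldots$, so a substantial part of the $\pi$-mass sits below that level. On the other hand, by Poincaré, $\mathrm{Var}_\pi(f)\le t_{\mathrm{rel}}\mathcal{E}(f)\le t_{\mathrm{rel}}k^2/2$, and Chebyshev under $\pi$ keeps $f$ within $k\sqrt{t_{\mathrm{rel}}/(1-\varepsilon)}$ of $\mathbb{E}_\pi f$ with mass more than $1-(1-\varepsilon)/2$. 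Combining these yields $\mathbb{E}_\pi f\lesssim t+\sqrt{2t/(1-\varepsilon)}+k\sqrt{t_{\mathrm{rel}}/(1-\varepsilon)}$, and then $\pi(f\le M)$ is large for $M$ equal to the bracket in the statement.

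\textbf{Step 3: pass to the diameter.} A lower bound on $\pi(f\le M)$ alone does not bound $D$. The trick is to repeat the argument with the reversed distance $g(z)\coloneqq d_P(z\to y)$, again Lipschitz with constant $k$. For $g$ use the same mixing argument started from $y$'s side, or apply the mixing bound to the chain started at each point: $\mathscr{P}_t(z,\cdot)$ stays where $d_P(z\to\cdot)\le M$. The plan is to show that there exists $z$ with $d_P(x\to z)\le M$ and $d_P(z\to y)\le kM$. Then $D\le d_P(x\to z)+d_P(z\to y)\le (1+k)M$, which explains the factor $(1+k)$: one leg is measured forward and the other costs a factor $k$ to reverse a forward path. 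Concretely, take $z$ in the $\pi$-typical set $\{d_P(y\to\cdot)\le M\}$, which has $\pi$-mass $>1/2$, intersected with $\{d_P(x\to\cdot)\le M\}$, which also has mass $>1/2$. A forward path from $y$ to $z$ of length at most $M$ reverses edge by edge into a path from $z$ to $y$ of length at most $kM$.

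\textbf{Main obstacle.} I expect the hardest part to be calibrating the constants so that each of these two sets gets $\pi$-mass strictly above $1/2$ with exactly the stated constants $\sqrt{2t/(1-\varepsilon)}$ and $k\sqrt{t_{\mathrm{rel}}/(1-\varepsilon)}$. I would try a one-sided Chebyshev or Cantelli bound with the mixing slack $\varepsilon$ split evenly between the two sides.
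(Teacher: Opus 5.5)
Your Steps 1 and 2 are the paper's argument, and Step 2 already gives the estimate you need: $\mathbb{E}_\pi[d_P(o\to U)]\le M$ for every $o$, where $U\sim\pi$ and $M$ is the bracket in the statement. Two small fixes there. The mixing inequality you need is $\pi(A)\ge\mathscr{P}_t(x,A)-\varepsilon$, not the reverse. Chebyshev gives mass only $\ge(1+\varepsilon)/2$, so for a strict overlap use $\varepsilon'\in(\varepsilon,1)$ and let $\varepsilon'\downarrow\varepsilon$, as the paper does.

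\textbf{The gap is in Step 3} (and in the sentence ending Step 2). You claim $\{d_P(x\to\cdot)\le M\}$ has $\pi$-mass $>1/2$. This does not follow from $\mathbb{E}_\pi f\le M$. Nor can it be extracted from the two facts you have: $\pi(f\le a)\ge(1-\varepsilon)/2$ with $a=t+\sqrt{2t/(1-\varepsilon)}$, and $\mathrm{Var}_\pi(f)\le k^2t_{\mathrm{rel}}/2$. Set $s=k\sqrt{t_{\mathrm{rel}}/(1-\varepsilon)}$. Consider a law with mass $(1-\varepsilon)/2$ at $a$ and mass $(1+\varepsilon)/2$ at $a+b$, where $s<b\le s\sqrt{2/(1+\varepsilon)}$. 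It satisfies both constraints, yet gives $\{f\le M\}=\{f\le a+s\}$ mass only $(1-\varepsilon)/2<1/2$. So no Cantelli-type calibration can make your symmetric intersection work with the stated constants. Chebyshev around the mean gives mass $\ge(1+\varepsilon)/2$ only at level $M+s$. That would prove the proposition with $2k\sqrt{t_{\mathrm{rel}}/(1-\varepsilon)}$ in place of $k\sqrt{t_{\mathrm{rel}}/(1-\varepsilon)}$.

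\textbf{The missing idea is to average instead of intersecting.} For every $z$, your edge-by-edge reversal gives $d_P(x\to y)\le d_P(x\to z)+d_P(z\to y)\le d_P(x\to z)+k\,d_P(y\to z)$. Take $z=U\sim\pi$ and expectations, and apply Step 2 at $x$ and at $y$:
\[
\mathrm{diam}(\mathcal{X})\le\mathbb{E}_\pi[d_P(x\to U)]+k\,\mathbb{E}_\pi[d_P(y\to U)]\le(1+k)M.
\]
This is exactly how the paper concludes.

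Your intersection idea can also be rescued, but only asymmetrically. The set $\{d_P(y\to\cdot)\le a\}$ has mass $\ge(1-\varepsilon)/2$. For $\lambda>s$, the set $\{d_P(x\to\cdot)\le M+\lambda\}$ has mass $>(1+\varepsilon)/2$. Intersecting them gives $\mathrm{diam}(\mathcal{X})\le(1+k)a+s+\lambda$. Letting $\lambda\downarrow s$ and using $2\le 1+k$ finishes the proof.
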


\begin{proof}
Fix $o \in \mathcal{X}$. 
Let $t \coloneqq t_{\mathrm{mix}}(\varepsilon) \ge 0$. 
Let
\[ A_{o,t,\varepsilon} \coloneqq \left\{x\in\mathcal{X}\ \middle|\ d_P (o \to x) \le t+\sqrt{\frac{2t}{1-\varepsilon}}\right\}. \]

Let $(Y_n)_{n \ge 0}$ be a Markov chain on $\mathcal{X}$ with transition probability $P$ and $Y_0 = o$,
and let $(N_t)_{t \ge 0}$ be a Poisson process with rate $1$ which is independent of $(Y_n)_{n \ge 0}$.
Then the distribution of $Y_{N_t}$ is $\mathscr{P}_t (o, \cdot)$.
Since $P(Y_{n}, Y_{n+1}) > 0$ almost surely for every $n \ge 0$, we see that $d_P (o \to Y_n) \le n$ almost surely for every $n \ge 0$,
and hence $d_P (o \to Y_{N_t}) \le N_t$ almost surely;
that is, the distribution of $d_P (o \to Y_{N_t})$ is stochastically dominated by the Poisson distribution with mean $t$.

Hence
\[ \mathscr{P}_t\left(o, A_{o,t,\varepsilon}^c\right) = \mathbb{P} \left(Y_{N_t} \notin A_{o,t,\varepsilon}\right) \]
\[ = \mathbb{P} \!\left(d_P (o \to Y_{N_t}) > t+\sqrt{\frac{2t}{1-\varepsilon}}\right) \le \mathbb{P}\!\left(N_t >  t+\sqrt{\frac{2t}{1-\varepsilon}}\right). \]

Assume that $t > 0$. 
Since $\mathbb{E}[N_t] = \mathrm{Var}(N_t) = t > 0$, by the Chebyshev inequality,
\[ \mathbb{P}\!\left(N_t > t+\sqrt{\frac{2t}{1-\varepsilon}}\right) \le  \frac{1-\varepsilon}{2}. \]
Therefore, 
\begin{equation}\label{eq:Pt-lower-annulus}
\mathscr{P}_t \left(o,A_{o,t,\varepsilon}\right)\ge \frac{1+\varepsilon}{2}. 
\end{equation}
If $t=0$, then $A_{o,0,\varepsilon} = \{o\}$ and $\mathcal P_0(o,A_{o,0,\varepsilon})=1$, and hence, \eqref{eq:Pt-lower-annulus} holds.

Since $t = t_{\mathrm{mix}}(\varepsilon)$, we obtain that
$\mathscr{P}_t \left(o,A_{o,t,\varepsilon}\right)\le \pi(A_{o,t,\varepsilon})+\varepsilon$. 
By this and \eqref{eq:Pt-lower-annulus}, it holds that 
\[ \pi\left(A_{o,t,\varepsilon}\right) \ge \frac{1-\varepsilon}{2}.\]

Let $f(x)\coloneqq d_P (o \to x)$ for $x \in \mathcal{X}$, and let $U$ be a random variable with values in $\mathcal{X}$ whose distribution is $\pi$.
Then
\begin{equation}\label{eq:dist-half}
\mathbb{P} \left(f(U)\le t+\sqrt{\frac{2t}{1-\varepsilon}}\right) = \pi\left(A_{o,t,\varepsilon}\right) \ge \frac{1-\varepsilon}{2}.
\end{equation}

We remark that for every $x \in \mathcal{X}$, $2\Gamma(f,f)(x) \le \|f\|_{\mathrm{Lip}}^2$.
Hence, $2\mathcal{E}(f) \le \|f\|_{\mathrm{Lip}}^2$.
By \eqref{eq:def-relax} and the estimate that $\|f\|_{\mathrm{Lip}} \le k_{(\mathcal{X}, P)}$,
we obtain that
\[ \mathrm{Var}_\pi (f)\le t_{\mathrm{rel}}\mathcal{E}(f) \le \frac{1}{2}\, t_{\mathrm{rel}} \left\|f \right\|_{\mathrm{Lip}}^2 \le \frac{1}{2}\, t_{\mathrm{rel}} k_{(\mathcal{X}, P)}^2. \]

By this estimate and Chebyshev's inequality, we see that for every $\varepsilon^{\prime} \in (\varepsilon, 1)$, 
\begin{equation*}
\mathbb{P}\left(\left|f(U)- \mathbb{E}[f(U)]\right|\le k_{(\mathcal{X}, P)}\sqrt{\frac{t_{\mathrm{rel}}}{1-\varepsilon^{\prime} }} \right)\ge \frac{1+\varepsilon^{\prime} }{2}.
\end{equation*}

By this and \eqref{eq:dist-half}, 
\[ \left\{f(U)\le t_{\mathrm{mix}}(\varepsilon) + \sqrt{\frac{2 t_{\mathrm{mix}}(\varepsilon)}{1-\varepsilon}}\right\} \cap \left\{\left|f(U)- \mathbb{E}[f(U)]\right|\le k_{(\mathcal{X}, P)}\sqrt{\frac{t_{\mathrm{rel}}}{1-\varepsilon^{\prime} }}\right\} \ne \emptyset. \]

Therefore,
\[  \mathbb{E}\left[d_P (o \to U) \right] = \mathbb{E}[f(U)] \le t_{\mathrm{mix}}(\varepsilon) + \sqrt{\frac{2t_{\mathrm{mix}}(\varepsilon)}{1-\varepsilon}} +k_{(\mathcal{X}, P)}\sqrt{\frac{t_{\mathrm{rel}}}{1-\varepsilon^{\prime} }}. \]
By letting $\varepsilon^{\prime} \to \varepsilon$,
we obtain that
\begin{equation}\label{eq:exp-dist-pi}
\mathbb{E}\left[d_P (o \to U) \right] \le t_{\mathrm{mix}}(\varepsilon) + \sqrt{\frac{2t_{\mathrm{mix}}(\varepsilon) }{1-\varepsilon}} +k_{(\mathcal{X}, P)}\sqrt{\frac{t_{\mathrm{rel}}}{1-\varepsilon}}.
\end{equation}

We see that
$d_P (x \to y) \le k_{(\mathcal{X}, P)}\,d_P (y \to x)$ for $x, y \in \mathcal{X}$,
by induction on the value of $d_P (y\to x)$.
Hence, for every $o^{\prime}, x \in \mathcal{X}$,
\[ d_P (o \to o^{\prime}) \le d_P (o\to x) + d_P (x\to o') \le d_P (o\to x)+k_{(\mathcal{X}, P)}\,d_P (o^{\prime} \to x). \]

By this and \eqref{eq:exp-dist-pi},
\[ d_P (o \to o') \le \mathbb{E}[d_P (o\to U)] + k_{(\mathcal{X}, P)}\, \mathbb{E}\left[d_P (o^{\prime}  \to U)\right] \]
\[ \le (k_{(\mathcal{X}, P)}+1)\left(t_{\mathrm{mix}}(\varepsilon) + \sqrt{\frac{2 t_{\mathrm{mix}}(\varepsilon)}{1-\varepsilon}} +k_{(\mathcal{X}, P)}\sqrt{\frac{t_{\mathrm{rel}}}{1-\varepsilon}} \right). \qedhere \]
\end{proof}

\subsection{Cutoff}\label{subsec:cutoff}

In this subsection, we complete the proof of Theorem \ref{thm:main}.
We first give a lower bound for the relaxation time.

\begin{Lem}\label{lem:trel-lower}
If $|\mathcal{X}| \ge 2$, then $t_{\mathrm{rel}} \ge \frac{1}{2}$.
\end{Lem}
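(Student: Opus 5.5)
Since $t_{\mathrm{rel}}$ is a supremum, it suffices to exhibit one test function $f$ with $\mathcal{E}(f)>0$ and $\mathrm{Var}_\pi(f)/\mathcal{E}(f)\ge 1/2$. The plan is to expand in terms of $P$. For any $f$ we have
\[ \mathcal{E}(f) = \frac12\sum_{x,y}\pi(x)P(x,y)\left(f(x)-f(y)\right)^2 . \]
Using stationarity of $\pi$ ($\sum_x \pi(x)P(x,y)=\pi(y)$) and expanding the square, this becomes
\[ \mathcal{E}(f) = \sum_x \pi(x)f(x)^2 - \sum_{x,y}\pi(x)P(x,y)f(x)f(y) = \langle f, (I-P)f\rangle_\pi . \]
No reversibility is needed for this identity. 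Because $\mathcal{E}(f+c)=\mathcal{E}(f)$ for constants $c$, we may restrict to $f$ with $\mathbb{E}_\pi[f]=0$, and then $\mathrm{Var}_\pi(f)=\|f\|_\pi^2$.

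Next we bound the cross term. Since $\pi$ is stationary, $P$ is a contraction on $L^2(\pi)$: by Jensen, $(Pf)(x)^2\le (P f^2)(x)$, and integrating against $\pi$ gives $\|Pf\|_\pi\le\|f\|_\pi$. Cauchy--Schwarz then yields $|\langle f,Pf\rangle_\pi|\le \|f\|_\pi^2$, and hence
\[ \mathcal{E}(f)\le 2\|f\|_\pi^2 = 2\,\mathrm{Var}_\pi(f). \]
Equivalently, by the pointwise bound $(a-b)^2\le 2a^2+2b^2$ together with stationarity,
\[ \mathcal{E}(f)\le \sum_{x,y}\pi(x)P(x,y)\left(f(x)^2+f(y)^2\right)=2\|f\|_\pi^2 . \]
This second route avoids operator language entirely, so we will use it.

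It remains to check that some centered $f$ has $\mathcal{E}(f)>0$. Since $|\mathcal{X}|\ge 2$, there is a nonconstant centered $f$, for example $f=\mathbf 1_{\{x_0\}}-\pi(x_0)$. Irreducibility gives a pair $x\ne y$ with $P(x,y)>0$ and $f(x)\ne f(y)$. Indeed, if $f(x)=f(y)$ for every edge $x\to y$, then $f$ would be constant along paths and hence constant on $\mathcal{X}$. Together with $\pi(x)>0$, this gives $\mathcal{E}(f)>0$. Combining with the previous step, $\mathrm{Var}_\pi(f)/\mathcal{E}(f)\ge 1/2$, and therefore $t_{\mathrm{rel}}\ge 1/2$.

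The main point requiring care is the contraction step $\sum_{x,y}\pi(x)P(x,y)f(y)^2=\sum_y\pi(y)f(y)^2$ in the non-reversible setting. This step uses only stationarity of $\pi$, not symmetry of the support, so the argument goes through without the symmetric support condition. No step appears to be a genuine obstacle.
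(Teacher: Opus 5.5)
Your proof is correct and follows essentially the same route as the paper: you prove the bound $\mathcal{E}(f)\le 2\,\mathrm{Var}_\pi(f)$ (which the paper calls easy) via $(a-b)^2\le 2a^2+2b^2$ and stationarity of $\pi$, and then test it on an indicator $\mathbf{1}_{\{x_0\}}$ with positive Dirichlet form. The only cosmetic difference is that the paper chooses $x_0$ as the tail of an edge $x_0\to y_0$, which gives $\mathcal{E}(\mathbf{1}_{\{x_0\}})\ge \pi(x_0)P(x_0,y_0)/2>0$ directly, whereas you get positivity from a general irreducibility argument applied to the centered indicator.
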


\begin{proof}
It is easy to see that for every $f : \mathcal{X} \to \mathbb{R}$,
\begin{equation}\label{eq:E-var}
\mathcal{E}(f) \le 2\,\mathrm{Var}_{\pi}(f).
\end{equation}
Since $|\mathcal{X}| \ge 2$ and $(\mathcal{X}, P)$ is irreducible,
there exist $x_0, y_0 \in \mathcal{X}$ such that $x_0 \ne y_0$ and $P(x_0, y_0) > 0$.
Let $f_0 \coloneqq \mathbf{1}_{\{x_0\}}$ be the indicator function of $\{x_0\}$.
Then
\[ \mathcal{E}(f_0) \ge \pi(x_0)\,\Gamma(f_0)(x_0) \ge \frac{\pi(x_0)\,P(x_0,y_0)}{2} > 0. \]
By \eqref{eq:def-relax} and \eqref{eq:E-var},
\[ t_{\mathrm{rel}} \ge \frac{\mathrm{Var}_{\pi}(f_0)}{\mathcal{E}(f_0)} \ge \frac{1}{2}. \qedhere \]
\end{proof}

We give an upper bound for the varentropy at the mixing time, which corresponds to \cite[Theorem 6]{Salez2024}. 

\begin{Prop}\label{thm:7.2}
Assume that every $\mathcal{X}_{(n)}$ has non-negative Bakry--\'Emery curvature.
Let $\varepsilon \in (0,1)$.
Assume that
\begin{equation}\label{eq:assump-varentropy}
\lim_{n \to \infty} \frac{k_n}{\sqrt{t_{\mathrm{mix}}^{(n)}(\varepsilon)}} = 0
\quad \text{and} \quad
\lim_{n \to \infty} \frac{k_n^{2}\sqrt{t_{\mathrm{rel}}^{(n)}}}{t_{\mathrm{mix}}^{(n)}(\varepsilon)} = 0.
\end{equation}
Then there exists $N \in \mathbb{N}$ such that for every $n \ge N$,
\[ V_{\mathrm{KL},n}^{*}\!\left(t_{\mathrm{mix}}^{(n)}(\varepsilon)\right) \le 144\,k_n^{4}\left(1+\log \Delta_n\right)^{2} t_{\mathrm{mix}}^{(n)}(\varepsilon). \]
\end{Prop}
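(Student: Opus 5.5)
The plan is to combine the local concentration inequality (Lemma \ref{lem:semigroup}) with the logarithmic gradient estimate (Proposition \ref{thm:3.14}), exactly as in the proof of Proposition \ref{thm:5.1}. The only thing that is not automatic is the hypothesis of Proposition \ref{thm:3.14}, namely
\[ t_{\mathrm{mix}}^{(n)}(\varepsilon) \ge \mathrm{diam}(\mathcal{X}_{(n)})/(k_n+3). \]
We will verify this for all large $n$ by means of the diameter bound in Proposition \ref{prop:6.3}. Once it holds, Proposition \ref{thm:5.1}'s argument gives
\[ V_{\mathrm{KL},n}^{*}(t) \le t\bigl(3k_n^{2}(\log\Delta_n+4)\bigr)^{2} \qquad \text{at } t=t_{\mathrm{mix}}^{(n)}(\varepsilon). \]
Since $\log\Delta_n+4\le 4(1+\log\Delta_n)$, this is at most $144\,k_n^{4}(1+\log\Delta_n)^{2}\,t$, which is the claim.

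To verify the diameter condition, write $t_n=t_{\mathrm{mix}}^{(n)}(\varepsilon)$ and $k=k_n$. Proposition \ref{prop:6.3} gives
\[ \frac{\mathrm{diam}(\mathcal{X}_{(n)})}{k+3} \le \frac{k+1}{k+3}\left(t_n+\sqrt{\frac{2t_n}{1-\varepsilon}}+k\sqrt{\frac{t_{\mathrm{rel}}^{(n)}}{1-\varepsilon}}\right). \]
It therefore suffices to show that
\[ \frac{k+1}{k+3}\left(1+\sqrt{\frac{2}{(1-\varepsilon)t_n}}+\frac{k\sqrt{t_{\mathrm{rel}}^{(n)}}}{t_n\sqrt{1-\varepsilon}}\right)\le 1 \]
for large $n$. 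Since $(k+1)/(k+3)=1-2/(k+3)$, this reduces to showing that the two error terms together are at most $2/(k+3)$, up to the factor $(k+1)/(k+3)<1$. Because $k\ge 1$, we have $2/(k+3)\ge 1/(2k)$, so it is enough that both
\[ k\sqrt{\frac{2}{(1-\varepsilon)t_n}} \qquad\text{and}\qquad \frac{k^{2}\sqrt{t_{\mathrm{rel}}^{(n)}}}{t_n\sqrt{1-\varepsilon}} \]
tend to zero. These are exactly the two limits assumed in \eqref{eq:assump-varentropy}. Hence there is an $N$ such that, for all $n\ge N$, their sum is below $1/(2k)\le 2/(k+3)$, and the condition holds.

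One edge case needs care: Proposition \ref{thm:3.14} needs $t>0$, since Lemma \ref{lem:log-Pt-Lip} is stated for $t>0$. The first limit in \eqref{eq:assump-varentropy} forces $t_n\to\infty$ because $k_n\ge1$, so $t_n>0$ for large $n$. No other difficulty is expected. The main point to check carefully is the bookkeeping between the factor $(k+1)/(k+3)$ and the $k$-dependent error terms, so that the hypotheses on $k_n/\sqrt{t_n}$ and $k_n^2\sqrt{t_{\mathrm{rel}}}/t_n$ are exactly what is needed.
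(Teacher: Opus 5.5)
Your proposal is correct and follows the paper's proof: you verify $t_{\mathrm{mix}}^{(n)}(\varepsilon)\ge \mathrm{diam}(\mathcal{X}_{(n)})/(k_n+3)$ for large $n$ using Proposition \ref{prop:6.3} and the two limits in \eqref{eq:assump-varentropy}, then apply Lemma \ref{lem:semigroup} and Proposition \ref{thm:3.14} with $\log\Delta_n+4\le 4(1+\log\Delta_n)$. The only difference is bookkeeping: the paper uses $k_n+1\le 2k_n$ to show the excess over $(k_n+1)t_n$ is $o(t_n)\le 2t_n$, whereas you divide by $t_n$ and use $2/(k+3)\ge 1/(2k)$; in your last step, say explicitly that the two displayed quantities eventually sum to at most $1/2$, which gives that the error terms themselves sum to at most $1/(2k_n)$.
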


\begin{proof}
Let $t_n \coloneqq t_{\mathrm{mix}}^{(n)}(\varepsilon)$.
Since $k_n \ge 1$, the first assumption of \eqref{eq:assump-varentropy} implies that
$\lim_{n \to \infty} t_n = \infty$.
By Proposition \ref{prop:6.3},
\[ \mathrm{diam}(\mathcal{X}_{(n)}) \le (k_n+1)\left(t_n + \sqrt{\frac{2t_n}{1-\varepsilon}} + k_n \sqrt{\frac{t_{\mathrm{rel}}^{(n)}}{1-\varepsilon}}\right). \]
Since $k_n \ge 1$, we see that $k_n + 1 \le 2k_n$, and hence
\[ (k_n+1)\left(\sqrt{\frac{2t_n}{1-\varepsilon}} + k_n \sqrt{\frac{t_{\mathrm{rel}}^{(n)}}{1-\varepsilon}}\right) \le \frac{2\sqrt{2}}{\sqrt{1-\varepsilon}} \left(k_n\sqrt{t_n} + k_n^{2}\sqrt{t_{\mathrm{rel}}^{(n)}}\right). \]
By \eqref{eq:assump-varentropy},
\[ k_n \sqrt{t_n} + k_n^{2}\sqrt{t_{\mathrm{rel}}^{(n)}} = o(t_n), \quad n \to \infty. \]
Therefore there exists $N \in \mathbb{N}$ such that for every $n \ge N$, $t_n \ge \mathrm{diam}(\mathcal{X}_{(n)})/(k_n+3)$.

Let $n \ge N$.
For $o \in \mathcal{X}_{(n)}$, let
$f_o \coloneqq \log\left(\frac{\mathscr{P}_{t_n}^{(n)}(o,\cdot)}{\pi_{(n)}(\cdot)}\right)$.
In the same manner as in the proof of Proposition \ref{thm:5.1},
by Lemma \ref{lem:semigroup} and Proposition \ref{thm:3.14},
\[ V_{\mathrm{KL},n}^{*}(t_n) \le \max_{o \in \mathcal{X}_{(n)}} \left\|\mathscr{P}_{t_n}^{(n)}(f_o^2) - (\mathscr{P}_{t_n}^{(n)} f_o)^2\right\|_{\infty} \]
\[ \le t_n \max_{o \in \mathcal{X}_{(n)}} \|f_o\|_{\mathrm{Lip}}^{2} \le t_n \left(3k_n^{2}\left(\log \Delta_n + 4\right)\right)^{2}. \]
Since $\Delta_n \ge 1$, we see that $\log \Delta_n + 4 \le 4\left(1 + \log \Delta_n\right)$, and hence
\[ V_{\mathrm{KL},n}^{*}(t_n) \le 144\,k_n^{4}\left(1+\log \Delta_n\right)^{2} t_n. \qedhere \]
\end{proof}

We now complete the proof of the main result.

\begin{proof}[Proof of Theorem \ref{thm:main}]
It suffices to show that for every $\varepsilon \in \left(0,1/2\right)$,
\begin{equation}\label{eq:cutoff-goal}
\lim_{n\to\infty}\frac{t_{\mathrm{mix}}^{(n)}(\varepsilon)}{t_{\mathrm{mix}}^{(n)}(1-\varepsilon)}=1.
\end{equation}
Fix $\varepsilon \in \left(0,1/2\right)$.
Since $\varepsilon < 1-\varepsilon$, it holds that
$t_{\mathrm{mix}}^{(n)}(\varepsilon) \ge t_{\mathrm{mix}}^{(n)}(1-\varepsilon)$ for every $n \ge 1$.

Since $k_n \ge 1$, there exist $x, y \in \mathcal{X}_{(n)}$ such that $d_{P_{(n)}}(x \to y) = 1$,
and in particular $|\mathcal{X}_{(n)}| \ge 2$.
Hence, by Lemma \ref{lem:trel-lower} and $\Delta_n \ge 1$,
\begin{equation}\label{eq:lower-half}
k_n^{2}\left(1+\log \Delta_n\right) t_{\mathrm{rel}}^{(n)} \ge t_{\mathrm{rel}}^{(n)} \ge 1/2.
\end{equation}
By this and \eqref{eq:main-assumption},
\[ \frac{1}{2\sqrt{t_{\mathrm{mix}}^{(n)}(1-\varepsilon)}} \le \frac{k_n^{2}\left(1+\log \Delta_n\right) t_{\mathrm{rel}}^{(n)}}{\sqrt{t_{\mathrm{mix}}^{(n)}(1-\varepsilon)}} \to 0, \quad n \to \infty, \]
and hence
\begin{equation}\label{eq:tmix-infty}
\lim_{n\to\infty} t_{\mathrm{mix}}^{(n)}(1-\varepsilon) = \infty.
\end{equation}

We now verify the assumptions of Proposition \ref{thm:7.2} with $1-\varepsilon \in \left(1/2,1\right)$ in place of $\varepsilon$.
Since $k_n \ge 1$ and $2\left(1+\log \Delta_n\right) t_{\mathrm{rel}}^{(n)} \ge 1$ by \eqref{eq:lower-half}, it follows from \eqref{eq:main-assumption} that
\[ \frac{k_n}{\sqrt{t_{\mathrm{mix}}^{(n)}(1-\varepsilon)}} \le \frac{k_n^{2}}{\sqrt{t_{\mathrm{mix}}^{(n)}(1-\varepsilon)}} \le \frac{2k_n^{2}\left(1+\log \Delta_n\right) t_{\mathrm{rel}}^{(n)}}{\sqrt{t_{\mathrm{mix}}^{(n)}(1-\varepsilon)}} \to 0, \quad n \to \infty. \]
Since $t_{\mathrm{rel}}^{(n)} \ge 1/2$, we see that $\sqrt{t_{\mathrm{rel}}^{(n)}} \le \sqrt{2}\,t_{\mathrm{rel}}^{(n)}$.
By \eqref{eq:tmix-infty}, it holds that $t_{\mathrm{mix}}^{(n)}(1-\varepsilon) \ge \sqrt{t_{\mathrm{mix}}^{(n)}(1-\varepsilon)}$ for every sufficiently large $n$, and hence, by \eqref{eq:main-assumption},
\[ \frac{k_n^{2}\sqrt{t_{\mathrm{rel}}^{(n)}}}{t_{\mathrm{mix}}^{(n)}(1-\varepsilon)} \le \frac{\sqrt{2}\,k_n^{2}\left(1+\log \Delta_n\right) t_{\mathrm{rel}}^{(n)}}{\sqrt{t_{\mathrm{mix}}^{(n)}(1-\varepsilon)}} \to 0, \quad n \to \infty. \]
Therefore, by Proposition \ref{thm:7.2}, there exists $N \in \mathbb{N}$ such that for every $n \ge N$,
\[ V_{\mathrm{KL},n}^{*}\!\left(t_{\mathrm{mix}}^{(n)}(1-\varepsilon)\right) \le 144\,k_n^{4}\left(1+\log \Delta_n\right)^{2} t_{\mathrm{mix}}^{(n)}(1-\varepsilon), \]
that is,
\[ \sqrt{V_{\mathrm{KL},n}^{*}\!\left(t_{\mathrm{mix}}^{(n)}(1-\varepsilon)\right)} \le 12\,k_n^{2}\left(1+\log \Delta_n\right)\sqrt{t_{\mathrm{mix}}^{(n)}(1-\varepsilon)}. \]

By Theorem \ref{thm:Thm5Salez}, for every $n \ge N$,
\[ 0 \le t_{\mathrm{mix}}^{(n)}(\varepsilon) - t_{\mathrm{mix}}^{(n)}(1-\varepsilon) \le \frac{2t_{\mathrm{rel}}^{(n)}}{\varepsilon^{2}} \left(1 + 12\,k_n^{2}\left(1+\log \Delta_n\right)\sqrt{t_{\mathrm{mix}}^{(n)}(1-\varepsilon)}\right). \]
By \eqref{eq:tmix-infty}, $t_{\mathrm{mix}}^{(n)}(\varepsilon) \ge t_{\mathrm{mix}}^{(n)}(1-\varepsilon) > 0$ for every sufficiently large $n$.
Hence, dividing by $t_{\mathrm{mix}}^{(n)}(\varepsilon)$, we obtain that for every sufficiently large $n$,
\begin{equation}\label{eq:ratio-bound}
0 \le 1 - \frac{t_{\mathrm{mix}}^{(n)}(1-\varepsilon)}{t_{\mathrm{mix}}^{(n)}(\varepsilon)}
\le \frac{2t_{\mathrm{rel}}^{(n)}}{\varepsilon^{2}\,t_{\mathrm{mix}}^{(n)}(\varepsilon)}
+ \frac{24\,k_n^{2}\left(1+\log \Delta_n\right) t_{\mathrm{rel}}^{(n)} \sqrt{t_{\mathrm{mix}}^{(n)}(1-\varepsilon)}}{\varepsilon^{2}\,t_{\mathrm{mix}}^{(n)}(\varepsilon)}.
\end{equation}
Since $k_n^{2}\left(1+\log \Delta_n\right) \ge 1$ and
$t_{\mathrm{mix}}^{(n)}(\varepsilon) \ge t_{\mathrm{mix}}^{(n)}(1-\varepsilon) \ge \sqrt{t_{\mathrm{mix}}^{(n)}(1-\varepsilon)}$ for every sufficiently large $n$, it follows from \eqref{eq:main-assumption} that
\[ \frac{t_{\mathrm{rel}}^{(n)}}{t_{\mathrm{mix}}^{(n)}(\varepsilon)} \le \frac{k_n^{2}\left(1+\log \Delta_n\right) t_{\mathrm{rel}}^{(n)}}{\sqrt{t_{\mathrm{mix}}^{(n)}(1-\varepsilon)}} \to 0, \quad n \to \infty. \]
Furthermore, by $t_{\mathrm{mix}}^{(n)}(\varepsilon) \ge t_{\mathrm{mix}}^{(n)}(1-\varepsilon)$ and \eqref{eq:main-assumption},
\[ \frac{k_n^{2}\left(1+\log \Delta_n\right) t_{\mathrm{rel}}^{(n)} \sqrt{t_{\mathrm{mix}}^{(n)}(1-\varepsilon)}}{t_{\mathrm{mix}}^{(n)}(\varepsilon)}
\le \frac{k_n^{2}\left(1+\log \Delta_n\right) t_{\mathrm{rel}}^{(n)}}{\sqrt{t_{\mathrm{mix}}^{(n)}(1-\varepsilon)}} \to 0, \quad n \to \infty. \]
By these estimates and \eqref{eq:ratio-bound}, \eqref{eq:cutoff-goal} holds.
Since $\varepsilon \in \left(0,1/2\right)$ is arbitrary, a cutoff occurs for $\left(\mathcal{X}_{(n)} \right)_n$.
\end{proof}

We give two remarks on the quantity $\Delta_n$ appearing in \eqref{eq:main-assumption}.

\begin{Rem}\label{rem:delta-symmetric}
Assume that $|\mathcal{X}| \ge 3$ and that the support of $P$ is symmetric, as in \cite{Salez2024}.
Then $\Delta(P) \ge 2$, and in particular $\log \Delta(P) \ge \log 2 > 0$.

Assume that $\Delta(P) < 2$.
Then, for every $x, y \in \mathcal{X}$ with $P(x,y) > 0$ and $x \ne y$,
we see that $P(x,y) \ge \frac{1}{\Delta(P)} > \frac{1}{2}$.
Since $\sum_{y \in \mathcal{X}} P(x,y) = 1$,
for every $x \in \mathcal{X}$ there exists at most one $y \ne x$ such that $P(x,y) > 0$,
and, by irreducibility, there exists at least one such $y$.
Hence for every $x \in \mathcal{X}$ there exists a unique $\sigma(x) \ne x$ such that $P(x, \sigma(x)) > 0$.
Since the support of $P$ is symmetric, we see that $P(\sigma(x), x) > 0$, and hence $\sigma(\sigma(x)) = x$.
Then, for every $x \in \mathcal{X}$, every path starting from $x$ stays in $\{x, \sigma(x)\}$,
which contradicts irreducibility and $|\mathcal{X}| \ge 3$.
\end{Rem}

\begin{Rem}\label{rem:delta-one}
In Theorem \ref{thm:main}, we do not assume that $\Delta_n \ge 2$,
while it is obvious that $\Delta_n \ge 1$.
The bound $\Delta(P) \ge 1$ is attained:
if $\mathcal{X} = \{x_1, x_2\}$ and $P(x_1, x_2) = P(x_2, x_1) = 1$, then $\Delta(P) = 1$.
The support of $P$ may also be non-symmetric in this situation:
for the directed cycle of length $m \ge 3$, that is,
$\mathcal{X} = \{x_1, \dots, x_m\}$ and $P(x_i, x_{i+1}) = 1$ for $1 \le i \le m$, where $x_{m+1} \coloneqq x_1$,
we see that $\Delta(P) = 1$.
This is the reason why we adopt the factor $1 + \log \Delta_n$ instead of $\log \Delta_n$ in \eqref{eq:main-assumption}.
\end{Rem}

The assumption \eqref{eq:main-assumption} may be complicated to verify directly.
We conclude this section by giving a sufficient condition for \eqref{eq:main-assumption} in terms of the sizes $|\mathcal{X}_{(n)}|$.
We first give a version of the Moore bound for directed graphs.

\begin{Lem}[Moore bound]\label{lem:moore}
\[ \mathrm{diam}(\mathcal{X}) \ge \frac{\log |\mathcal{X}|}{1 + \log \Delta(P)}. \]
\end{Lem}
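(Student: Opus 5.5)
Fix $o \in \mathcal{X}$ and set $D \coloneqq \mathrm{diam}(\mathcal{X})$. The plan is a counting argument on the ball of radius $D$ around $o$: every state can be reached from $o$ in at most $D$ steps, so all of $\mathcal{X}$ fits in that ball. Since each state has only a bounded number of successors, this forces $|\mathcal{X}| \le (\text{branching number})^{D}$. Taking logarithms then gives the claim, provided the logarithm of the branching number is bounded by $1+\log\Delta(P)$.

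First I bound the out-degree. For $x \in \mathcal{X}$, let $N(x) \coloneqq \{y \ne x : P(x,y) > 0\}$. Each $y \in N(x)$ satisfies $P(x,y) \ge 1/\Delta(P)$, and the row of $P$ sums to one. Hence $|N(x)| \le \Delta(P)$, and with the point $x$ itself included, $|N(x)\cup\{x\}| \le 1+\Delta(P)$.

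Next comes the counting. By definition of $D$, every $y \in \mathcal{X}$ satisfies $d_P(o \to y) \le D$, so $y$ is reached from $o$ by a path of length at most $D$. Padding such a path with "stay" steps gives a sequence of exactly $D$ moves, each either staying put or stepping into $N(\cdot)$. The number of such sequences is at most $(1+\Delta(P))^{D}$, so $|\mathcal{X}| \le (1+\Delta(P))^{D}$.

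Finally I convert this to the stated form, which is the only delicate step. Taking logarithms gives $\log|\mathcal{X}| \le D\log(1+\Delta(P))$, so it suffices to show $\log(1+\Delta) \le 1+\log\Delta$ for $\Delta \ge 1$. Writing $\log(1+\Delta) = \log\Delta + \log(1+1/\Delta)$, this reduces to $\log(1+1/\Delta) \le \log 2 \le 1$, which holds because $\Delta \ge 1$. Therefore $\log|\mathcal{X}| \le D\,(1+\log\Delta(P))$.

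Since $1+\log\Delta(P) \ge 1 > 0$, dividing by it yields $\mathrm{diam}(\mathcal{X}) \ge \log|\mathcal{X}|/(1+\log\Delta(P))$. The degenerate case $|\mathcal{X}| = 1$, where both sides are zero, is excluded by the standing assumption $|\mathcal{X}| \ge 2$ in any case.
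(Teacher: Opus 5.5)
Your proof is correct and follows essentially the paper's argument: bound the out-degree by $\Delta(P)$, note that the ball of radius $\mathrm{diam}(\mathcal{X})$ around a fixed point is all of $\mathcal{X}$, deduce $|\mathcal{X}| \le (1+\Delta(P))^{\mathrm{diam}(\mathcal{X})}$, and finish with $\log(1+\Delta(P)) \le \log 2 + \log\Delta(P)$. The only difference is cosmetic: you count padded walks with ``stay'' steps directly, whereas the paper bounds the sphere sizes by $|S_k| \le \lfloor\Delta(P)\rfloor^k$ and then sums them using $\sum_{k=0}^{N} D^k \le (1+D)^N$.
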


\begin{proof}
If $|\mathcal{X}| = 1$, then both sides are equal to $0$.
Assume that $|\mathcal{X}| \ge 2$.
Let $N \coloneqq \mathrm{diam}(\mathcal{X}) \ge 1$ and $D \coloneqq \lfloor \Delta(P) \rfloor \ge 1$.
If $P(x,y) > 0$ and $x \ne y$, then $P(x,y) \ge 1/\Delta(P)$.
Since $\sum_{y \in \mathcal{X}} P(x,y) = 1$, it holds that for every $x \in \mathcal{X}$,
\[ \left|\left\{y \in \mathcal{X} \,\middle|\, P(x,y) > 0, \ y \ne x \right\}\right| \le \Delta(P), \]
and hence the left-hand side is at most $D$.

Fix $x_0 \in \mathcal{X}$ and let
\[ S_k \coloneqq \left\{y \in \mathcal{X} \,\middle|\, d_{P}(x_0 \to y) = k \right\}, \quad k \in \mathbb{N}_0. \]
Let $k \ge 1$ and $y \in S_k$.
Then there exists $z \in \mathcal{X}$ such that $d_{P}(x_0 \to z) = k-1$ and $P(z,y) > 0$,
and we see that $z \ne y$, since otherwise $d_{P}(x_0 \to y) \le k-1$.
Hence
\[ S_k \subset \bigcup_{z \in S_{k-1}} \left\{w \in \mathcal{X} \,\middle|\, P(z,w) > 0, \ w \ne z \right\}, \]
and therefore $|S_k| \le D\,|S_{k-1}|$ for every $k \ge 1$.
By induction, $|S_k| \le D^{k}$ for every $k \in \mathbb{N}_0$.
Since $d_{P}(x_0 \to y) \le N$ for every $y \in \mathcal{X}$,
\[ |\mathcal{X}| = \sum_{k=0}^{N} |S_k| \le \sum_{k=0}^{N} D^{k} \le (1+D)^{N}, \]
where the last inequality follows by induction on $N \ge 1$: indeed,
\[ \sum_{k=0}^{N} D^{k} = 1 + D \sum_{k=0}^{N-1} D^{k} \le (1+D)^{N-1} + D\,(1+D)^{N-1} = (1+D)^{N}. \]
Since $\Delta(P) \ge 1$, we see that $1 + D \le 2\Delta(P)$, and hence
\[ \log |\mathcal{X}| \le N \log(1+D) \le N \left(\log 2 + \log \Delta(P)\right) \le N \left(1 + \log \Delta(P)\right). \qedhere \]
\end{proof}

\begin{Prop}\label{prop:sufficient}
Assume that
\begin{equation}\label{eq:suff}
\lim_{n \to \infty} t_{\mathrm{rel}}^{(n)} \sqrt{\frac{(1+k_n)^{5}\left(1+\log \Delta_n\right)^{3}}{\log |\mathcal{X}_{(n)}|}} = 0.
\end{equation}
Then, for every $\varepsilon \in \left[1/2, 1\right)$,
\begin{equation}\label{eq:suff-2} 
\lim_{n \to \infty} \frac{k_n^{2}\left(1+\log \Delta_n\right) t_{\mathrm{rel}}^{(n)}}{\sqrt{t_{\mathrm{mix}}^{(n)}(\varepsilon)}} = 0. 
\end{equation}
In particular, \eqref{eq:main-assumption} holds for every $\varepsilon \in \left(0,1/2\right)$, and hence,
if every $\mathcal{X}_{(n)}$ has non-negative Bakry--\'Emery curvature,
then a cutoff occurs for $\left(\mathcal{X}_{(n)}\right)_n$.
\end{Prop}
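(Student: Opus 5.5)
Since $t_{\mathrm{mix}}^{(n)}(\cdot)$ is non-increasing, it suffices to treat $\varepsilon$ close to $1$. More precisely, I will prove a lower bound on $t_{\mathrm{mix}}^{(n)}(\varepsilon)$ of order $\log|\mathcal{X}_{(n)}|/\big((1+k_n)(1+\log\Delta_n)\big)$. The idea is to combine the diameter bound of Proposition \ref{prop:6.3} with the Moore bound of Lemma \ref{lem:moore}. Write $k=k_n$, $\Delta=\Delta_n$, $t=t_{\mathrm{mix}}^{(n)}(\varepsilon)$, $T=t_{\mathrm{rel}}^{(n)}$ and $L=\log|\mathcal{X}_{(n)}|$.

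\textbf{Step 1: lower bound on $t$.} Lemma \ref{lem:moore} and Proposition \ref{prop:6.3} give
\[ \frac{L}{1+\log\Delta}\le \mathrm{diam}(\mathcal{X}_{(n)}) \le (1+k)\Big(t+\sqrt{2t/(1-\varepsilon)}+k\sqrt{T/(1-\varepsilon)}\Big). \]
Put $M:=L/\big((1+k)(1+\log\Delta)\big)$. Then
\[ M\le t+\sqrt{2t/(1-\varepsilon)}+k\sqrt{T/(1-\varepsilon)}. \]

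\textbf{Step 2: the $k\sqrt{T}$ term is negligible.} Assumption \eqref{eq:suff} says $T^2(1+k)^5(1+\log\Delta)^3/L\to0$. Since $T\ge1/2$ by Lemma \ref{lem:trel-lower}, this implies
\[ \frac{k^2 T}{M}\le \frac{2T^2(1+k)^3(1+\log\Delta)}{L}\to0, \]
so $k\sqrt{T}=o(\sqrt M)$. In particular $M\to\infty$, because $T(1+k)^{5/2}\ge 1/2$ forces $L/(1+\log\Delta)^3\to\infty$. Now fix $\varepsilon$. The inequality
\[ M\le t+c_\varepsilon\sqrt t+o(\sqrt M),\qquad c_\varepsilon=\sqrt{2/(1-\varepsilon)}, \]
yields $t\ge M/2$ for large $n$. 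To see this, suppose $t<M/2$. Then $t+c_\varepsilon\sqrt t\le M/2+c_\varepsilon\sqrt{M}$, and together with the $o(\sqrt M)$ term this is less than $M$ once $M$ is large, a contradiction.

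\textbf{Step 3: conclude.} With $t\ge M/2$,
\[ \frac{k^4(1+\log\Delta)^2T^2}{t}\le \frac{2k^4(1+\log\Delta)^2T^2(1+k)(1+\log\Delta)}{L}\le \frac{2T^2(1+k)^5(1+\log\Delta)^3}{L}\to0. \]
This is the square of \eqref{eq:suff-2}. For $\varepsilon'\in(0,1/2)$, apply this with $\varepsilon=1-\varepsilon'\in[1/2,1)$ to get \eqref{eq:main-assumption}. Cutoff then follows from Theorem \ref{thm:main}.

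The main obstacle is Step 2: absorbing the $\sqrt t$ and $k\sqrt T$ terms from the diameter bound. This requires first deducing $M\to\infty$ and $k^2T=o(M)$ from \eqref{eq:suff} using $T\ge1/2$. The exponent $5$ on $(1+k)$ is exactly what makes both Step 2 and Step 3 go through.
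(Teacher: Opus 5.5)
Your proposal is correct and follows essentially the same route as the paper: you combine the Moore bound with Proposition \ref{prop:6.3}, absorb the $k_n\sqrt{t_{\mathrm{rel}}^{(n)}}$ term, and obtain a lower bound on $t_{\mathrm{mix}}^{(n)}(\varepsilon)$ of order $\log|\mathcal{X}_{(n)}|/\bigl((1+k_n)(1+\log\Delta_n)\bigr)$, which gives \eqref{eq:suff-2} directly. One small fix: the reason you give for $M\to\infty$ only yields $L/(1+\log\Delta)^3\to\infty$, which does not control the factor $1+k$ in $M$, so derive it instead from $k^2T/M\to 0$ together with $k^2T\ge 1/2$.
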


\begin{proof}
As in the proof of Theorem \ref{thm:main}, we see that $|\mathcal{X}_{(n)}| \ge 2$,
and hence $\log |\mathcal{X}_{(n)}| \ge \log 2 > 0$ and, by Lemma \ref{lem:trel-lower}, $t_{\mathrm{rel}}^{(n)} \ge 1/2$.
Let
\[ R_n \coloneqq \frac{\log |\mathcal{X}_{(n)}|}{(1+k_n)^{5}\left(1+\log \Delta_n\right)^{3}}
\quad \text{and} \quad
L_n \coloneqq \frac{\log |\mathcal{X}_{(n)}|}{(1+k_n)\left(1+\log \Delta_n\right)}. \]
Then \eqref{eq:suff} states that $\lim_{n \to \infty} t_{\mathrm{rel}}^{(n)}/\sqrt{R_n} = 0$, and
\begin{equation}\label{eq:LnRn}
L_n = (1+k_n)^{4}\left(1+\log \Delta_n\right)^{2} R_n \ge R_n.
\end{equation}
Since $t_{\mathrm{rel}}^{(n)} \ge 1/2$, it follows from \eqref{eq:suff} that
\begin{equation}\label{eq:Rn-infty}
\lim_{n \to \infty} R_n = \infty,
\end{equation}
and hence, by \eqref{eq:LnRn}, $\lim_{n \to \infty} L_n = \infty$.
Furthermore, by \eqref{eq:LnRn} and $k_n \le 1+k_n$,
\begin{equation}\label{eq:kn-sqrt-trel}
\frac{k_n \sqrt{t_{\mathrm{rel}}^{(n)}}}{L_n} \le \frac{\sqrt{t_{\mathrm{rel}}^{(n)}}}{(1+k_n)^{3}\left(1+\log \Delta_n\right)^{2} R_n} \le \frac{\sqrt{t_{\mathrm{rel}}^{(n)}}}{R_n} \to 0, \quad n \to \infty.
\end{equation}
Here the convergence in \eqref{eq:kn-sqrt-trel} follows from \eqref{eq:suff} and \eqref{eq:Rn-infty}, since
\[ \frac{\sqrt{t_{\mathrm{rel}}^{(n)}}}{R_n} = \left(\frac{t_{\mathrm{rel}}^{(n)}}{\sqrt{R_n}}\right)^{1/2} \frac{1}{R_n^{3/4}}. \]

Fix $\varepsilon \in \left[1/2, 1\right)$ and let $t_n \coloneqq t_{\mathrm{mix}}^{(n)}(\varepsilon)$ and $c_{\varepsilon} \coloneqq \sqrt{\frac{2}{1-\varepsilon}}$.
By applying Proposition \ref{prop:6.3} and Lemma \ref{lem:moore},
\[ L_n \le t_n + \sqrt{\frac{2t_n}{1-\varepsilon}} + k_n \sqrt{\frac{t_{\mathrm{rel}}^{(n)}}{1-\varepsilon}} \le t_n + c_{\varepsilon}\sqrt{t_n} + c_{\varepsilon}\,k_n \sqrt{t_{\mathrm{rel}}^{(n)}}. \]
By \eqref{eq:kn-sqrt-trel}, it holds that $c_{\varepsilon}\,k_n \sqrt{t_{\mathrm{rel}}^{(n)}} \le \frac{L_n}{2}$ for every sufficiently large $n$, and hence
\[ t_n + c_{\varepsilon}\sqrt{t_n} \ge \frac{L_n}{2} \]
for every sufficiently large $n$.
If $t_n \le 1$, then $t_n + c_{\varepsilon}\sqrt{t_n} \le 1 + c_{\varepsilon}$.
Since $\lim_{n \to \infty} L_n = \infty$, we see that $t_n > 1$ for every sufficiently large $n$,
and hence $\sqrt{t_n} \le t_n$ and
\[ t_n \ge \frac{L_n}{2\left(1 + c_{\varepsilon}\right)} \]
for every sufficiently large $n$.
Therefore, by \eqref{eq:LnRn}, it holds that for every sufficiently large $n$,
\begin{align*} \frac{\left(k_n^{2}\left(1+\log \Delta_n\right) t_{\mathrm{rel}}^{(n)}\right)^{2}}{t_n}
&\le \frac{2\left(1 + c_{\varepsilon}\right) k_n^{4}\left(1+\log \Delta_n\right)^{2} \left(t_{\mathrm{rel}}^{(n)}\right)^{2}}{L_n} \\
&\le 2\left(1 + c_{\varepsilon}\right) \left(\frac{t_{\mathrm{rel}}^{(n)}}{\sqrt{R_n}}\right)^{2} \to 0, \quad n \to \infty, 
\end{align*}
where the convergence follows from \eqref{eq:suff}.
Hence \eqref{eq:suff-2} holds. 
For every $\varepsilon \in \left(0,1/2\right)$, applying this with $1-\varepsilon \in \left(1/2,1\right)$ in place of $\varepsilon$ yields \eqref{eq:main-assumption}.
The last assertion follows from Theorem \ref{thm:main}.
\end{proof}

\subsection{Discrete time}\label{subsec:discrete}

In this subsection, we prove Theorem \ref{thm:discrete}.
Throughout this subsection, we fix $\delta \in \left(0,1\right)$ and assume that $(\mathcal{X}, P)$ is $\delta$-lazy.

Let $Q \coloneqq (1-\delta)^{-1} (P - \delta I)$.
Then, $P = \delta I + (1-\delta) Q$,
that is, $P$ is the $\delta$-lazy walk of $Q$ in the sense of \cite{CSC2013}.
We denote by $(\mathscr{P}_t^{Q})_{t \ge 0}$ the continuous-time semigroup associated with $(\mathcal{X}, Q)$, that is,
\[ \mathscr{P}_t^{Q} (x,y) \coloneqq e^{-t} \sum_{n=0}^{\infty} \frac{t^n}{n!} \, Q^n (x,y), \quad x,y \in \mathcal{X},\ t \ge 0, \]
and define $t_{\mathrm{mix}}^{Q}(\cdot)$ by replacing $\mathscr{P}_t$ with $\mathscr{P}_t^{Q}$ in the definition of $t_{\mathrm{mix}}(\cdot)$.

\begin{Lem}\label{lem:lazy-comparison}
The following hold.\\
(1) $Q$ is a stochastic matrix on $\mathcal{X}$, $(\mathcal{X}, Q)$ is irreducible, and the stationary distribution of $Q$ is $\pi$.\\
(2) $d_{Q}(x \to y) = d_{P}(x \to y)$ for every $x, y \in \mathcal{X}$. In particular, $k_{(\mathcal{X}, Q)} = k_{(\mathcal{X}, P)}$.\\
(3) $\Delta(Q) = (1-\delta)\,\Delta(P) \le \Delta(P)$.\\
(4) The relaxation time of $(\mathcal{X}, Q)$ is equal to $(1-\delta)\,t_{\mathrm{rel}}$.\\
(5) $\textup{BE}(\mathcal{X}, Q) = (1-\delta)^{-1}\,\textup{BE}(\mathcal{X}, P)$.\\
(6) $\mathscr{P}_t^{Q} = \mathscr{P}_{t/(1-\delta)}$ for every $t \ge 0$. In particular, $t_{\mathrm{mix}}^{Q}(\varepsilon) = (1-\delta)\,t_{\mathrm{mix}}(\varepsilon)$ for every $\varepsilon \in (0,1)$.
\end{Lem}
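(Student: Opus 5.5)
The plan is to reduce every item to two elementary identities relating $Q$ and $P$:
\[ Q(x,y) = \frac{P(x,y)}{1-\delta} \quad (x \ne y), \qquad Q - I = \frac{P - I}{1-\delta}. \]
Both follow at once from $P = \delta I + (1-\delta)Q$. Since every quantity in (1)--(6) depends only on the off-diagonal entries of the chain or on the generator $P-I$, each item becomes a short computation.

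For (1), I will check the entries and the stationary distribution directly. Off the diagonal $Q(x,y) \ge 0$, and on the diagonal $Q(x,x) = (P(x,x)-\delta)/(1-\delta) \ge 0$ by $\delta$-laziness. The row sums are $(1-\delta)/(1-\delta) = 1$. Stationarity follows from $\pi Q = (\pi P - \delta \pi)/(1-\delta) = \pi$. Irreducibility of $Q$ will follow from (2), so I prove (2) first.

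For (2), the first identity shows that $P$ and $Q$ have the same off-diagonal support. It remains to see that $d_P(x\to y)$ is determined by the off-diagonal support alone. For $x=y$ both distances are $0$. For $x \ne y$, the condition $P^k(x,y)>0$ means there is a path $x=x_0,\dots,x_k=y$ with $P(x_{i},x_{i+1})>0$ for every $i$. Deleting the steps with $x_i = x_{i+1}$ gives a path of length at most $k$ that uses only off-diagonal transitions. Conversely, any such off-diagonal path of length $m$ gives $P^m(x,y)>0$. Hence $d_P(x \to y)$ is the length of a shortest off-diagonal path. The same holds for $Q$, so $d_Q = d_P$. Since $k_{(\mathcal{X},\cdot)}$ is defined through $d$ alone, $k_{(\mathcal{X},Q)} = k_{(\mathcal{X},P)}$, and irreducibility of $Q$ follows as well.

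For (3), the definition of $\Delta$ uses only off-diagonal entries. The relation $1/Q(x,y) = (1-\delta)/P(x,y)$ for $x \ne y$ therefore gives $\Delta(Q) = (1-\delta)\Delta(P)$. Since $1-\delta \le 1$, this is at most $\Delta(P)$.

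For (4) and (5), the diagonal terms vanish in $\Gamma$ and in $\mathcal{L}$ because of the factors $f(x)-f(y)$. Hence
\[ \Gamma_Q = (1-\delta)^{-1}\Gamma_P, \qquad \mathcal{L}_Q = (1-\delta)^{-1}\mathcal{L}_P, \qquad \mathcal{E}_Q = (1-\delta)^{-1}\mathcal{E}_P. \]
The variance is unchanged because $\pi$ is the same, which gives $t_{\mathrm{rel}}^{Q} = (1-\delta)t_{\mathrm{rel}}$ for (4). For (5), the curvature inequality for $Q$ with constant $\kappa$ reads
\[ (1-\delta)^{-2}\left(\tfrac12\mathcal{L}_P\Gamma_P(f) - \Gamma_P(f,\mathcal{L}_Pf)\right) \ge \kappa (1-\delta)^{-1}\Gamma_P(f). \]
Multiplying by $(1-\delta)^2$ shows that this is exactly the inequality for $P$ with constant $(1-\delta)\kappa$. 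Taking suprema gives $\textup{BE}(\mathcal{X},Q) = (1-\delta)^{-1}\textup{BE}(\mathcal{X},P)$.

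For (6), write $\mathscr{P}^Q_t = e^{t(Q-I)}$. By the second identity this equals $e^{(t/(1-\delta))(P-I)} = \mathscr{P}_{t/(1-\delta)}$. The mixing-time relation $t_{\mathrm{mix}}^{Q}(\varepsilon) = (1-\delta)t_{\mathrm{mix}}(\varepsilon)$ then follows immediately from the definition by the change of variable $t \mapsto t/(1-\delta)$.

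None of the steps is a real obstacle. The only point needing care is (2): one must argue that loop steps never help in reaching $y$ faster, so that the distance depends only on the off-diagonal support.
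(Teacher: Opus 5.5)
Your proposal is correct and follows the paper's proof item by item. The only divergence is in (2): the paper expands $P^{k} = \sum_{j=0}^{k} \binom{k}{j}\delta^{k-j}(1-\delta)^{j} Q^{j}$ with positive coefficients to get $d_Q \le d_P$, and uses $P^{j} \ge (1-\delta)^{j} Q^{j}$ for the reverse inequality. You instead observe that $d_P(x \to y)$ depends only on the off-diagonal support, since loop steps can be deleted from any path; this is equally short and shows slightly more, namely that $d_P$ is unchanged by any modification of the diagonal entries.
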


\begin{proof}
(1)
Since $(\mathcal{X}, P)$ is $\delta$-lazy, we see that
$Q(x,x) = (1-\delta)^{-1}\left(P(x,x) - \delta\right) \ge 0$ for every $x \in \mathcal{X}$,
and $Q(x,y) = (1-\delta)^{-1} P(x,y) \ge 0$ for every $x, y \in \mathcal{X}$ with $x \ne y$.
Furthermore, $\sum_{y \in \mathcal{X}} Q(x,y) = (1-\delta)^{-1}(1 - \delta) = 1$ for every $x \in \mathcal{X}$, and
\[ \pi Q = (1-\delta)^{-1}\left(\pi P - \delta \pi\right) = (1-\delta)^{-1}\left(\pi - \delta \pi\right) = \pi. \]
The irreducibility of $(\mathcal{X}, Q)$ follows from (2).

(2)
Since $Q$ commutes with $I$, it holds that for every $k \in \mathbb{N}_0$,
\begin{equation}\label{eq:Pk-Qj}
P^{k} = \left(\delta I + (1-\delta) Q\right)^{k} = \sum_{j=0}^{k} \binom{k}{j} \delta^{k-j} (1-\delta)^{j}\, Q^{j}.
\end{equation}
Since $\delta \in \left(0,1\right)$, all the coefficients in \eqref{eq:Pk-Qj} are positive, and hence,
for every $x, y \in \mathcal{X}$ and $k \in \mathbb{N}_0$,
$P^{k}(x,y) > 0$ if and only if $Q^{j}(x,y) > 0$ for some $j \in \{0, 1, \dots, k\}$.
If $k = d_{P}(x \to y)$, then $P^{k}(x,y) > 0$, and hence $Q^{j}(x,y) > 0$ for some $j \le k$, which yields $d_{Q}(x \to y) \le d_{P}(x \to y)$.
Conversely, if $j = d_{Q}(x \to y)$, then $P^{j}(x,y) \ge (1-\delta)^{j} Q^{j}(x,y) > 0$, and hence $d_{P}(x \to y) \le d_{Q}(x \to y)$.
Since $k_{(\mathcal{X}, P)}$ and $k_{(\mathcal{X}, Q)}$ are determined by $d_{P}$ and $d_{Q}$ respectively, we obtain $k_{(\mathcal{X}, Q)} = k_{(\mathcal{X}, P)}$.

(3)
For $x \ne y$, we see that $Q(x,y) > 0$ if and only if $P(x,y) > 0$, and in this case $\frac{1}{Q(x,y)} = \frac{1-\delta}{P(x,y)}$.
Taking the maximum yields $\Delta(Q) = (1-\delta)\,\Delta(P)$.

(4)
Let $\Gamma^{Q}$ and $\mathcal{E}^{Q}$ denote the carr\'e du champ and the Dirichlet form defined for $(\mathcal{X}, Q)$, respectively.
Since the terms with $y = x$ vanish in the definition of $\Gamma$,
\begin{equation}\label{eq:Gamma-Q}
\Gamma^{Q}(f,g)(x) = \frac{1}{2} \sum_{y \ne x} \left(f(x)-f(y)\right)\left(g(x)-g(y)\right) Q(x,y) = (1-\delta)^{-1}\,\Gamma(f,g)(x),
\end{equation}
and hence $\mathcal{E}^{Q}(f) = (1-\delta)^{-1}\,\mathcal{E}(f)$ for every $f \colon \mathcal{X} \to \mathbb{R}$.
By \eqref{eq:def-relax}, the relaxation time of $(\mathcal{X}, Q)$ is equal to $(1-\delta)\,t_{\mathrm{rel}}$.

(5)
Let $\mathcal{L}^{Q}$ denote the Laplacian defined for $(\mathcal{X}, Q)$.
In the same manner as in \eqref{eq:Gamma-Q}, we see that $\mathcal{L}^{Q} f = (1-\delta)^{-1}\,\mathcal{L} f$.
By this and \eqref{eq:Gamma-Q}, for every $\kappa \in \mathbb{R}$, $f \colon \mathcal{X} \to \mathbb{R}$ and $x \in \mathcal{X}$,
\[ \frac{1}{2} \mathcal{L}^{Q}\Gamma^{Q}(f)(x) - \Gamma^{Q}(f, \mathcal{L}^{Q} f)(x) - \frac{\kappa}{1-\delta}\,\Gamma^{Q}(f)(x) \]
\[ = (1-\delta)^{-2}\left(\frac{1}{2} \mathcal{L}\Gamma(f)(x) - \Gamma(f, \mathcal{L} f)(x) - \kappa\,\Gamma(f)(x)\right). \]
Hence $\kappa$ satisfies the defining inequality of $\textup{BE}(\mathcal{X}, P)$ if and only if $\kappa/(1-\delta)$ satisfies that of $\textup{BE}(\mathcal{X}, Q)$.
Taking the suprema yields the assertion.

(6)
Since $tQ$ and $-tI$ commute, we see that
$\mathscr{P}_t^{Q} = e^{-t}\sum_{n=0}^{\infty} \frac{t^n}{n!} Q^n = e^{t(Q - I)}$, and similarly $\mathscr{P}_t = e^{t(P-I)}$.
Since $Q - I = (1-\delta)^{-1}(P - I)$,
\[ \mathscr{P}_t^{Q} = \exp\left(\frac{t}{1-\delta}\left(P - I\right)\right) = \mathscr{P}_{t/(1-\delta)}. \]
Hence, for every $t \ge 0$,
$\max_{x \in \mathcal{X}}\left\|\mathscr{P}_t^{Q}(x, \cdot) - \pi\right\|_{\mathrm{TV}} = \max_{x \in \mathcal{X}}\left\|\mathscr{P}_{t/(1-\delta)}(x, \cdot) - \pi\right\|_{\mathrm{TV}}$,
and therefore $t_{\mathrm{mix}}^{Q}(\varepsilon) = (1-\delta)\,t_{\mathrm{mix}}(\varepsilon)$ for every $\varepsilon \in (0,1)$.
\end{proof}

Now we proceed to the proof. 

\begin{proof}[Proof of Theorem \ref{thm:discrete}]
For $n \ge 1$, let $Q_{(n)} \coloneqq (1-\delta)^{-1}\left(P_{(n)} - \delta I\right)$.
We denote by $k_n^{Q}$, $\Delta_n^{Q}$, $t_{\mathrm{rel}}^{Q,(n)}$, $(\mathscr{P}_t^{Q,(n)})_{t \ge 0}$ and $t_{\mathrm{mix}}^{Q,(n)}(\cdot)$
the quantities $k_{(\mathcal{X}, Q)}$, $\Delta(Q)$, the relaxation time, the continuous-time semigroup and its mixing time defined for $(\mathcal{X}_{(n)}, Q_{(n)})$, respectively.

By Lemma \ref{lem:lazy-comparison} (1) and (5),
$\left(\mathcal{X}_{(n)},\,Q_{(n)},\pi_{(n)}\right), n \ge 1$, is a sequence of finite irreducible Markov chains with non-negative Bakry--\'Emery curvature.
By Lemma \ref{lem:lazy-comparison} (2), (3), (4) and (6), for every $\varepsilon \in \left(0,1/2\right)$,
\begin{align*} 
\frac{(k_n^{Q})^{2}\left(1+\log \Delta_n^{Q}\right) t_{\mathrm{rel}}^{Q,(n)}}{\sqrt{t_{\mathrm{mix}}^{Q,(n)}(1-\varepsilon)}}
&= \frac{k_n^{2}\left(1+\log\left((1-\delta)\Delta_n\right)\right) (1-\delta)\,t_{\mathrm{rel}}^{(n)}}{\sqrt{(1-\delta)\,t_{\mathrm{mix}}^{(n)}(1-\varepsilon)}} \\
&\le \sqrt{1-\delta}\,\frac{k_n^{2}\left(1+\log \Delta_n\right) t_{\mathrm{rel}}^{(n)}}{\sqrt{t_{\mathrm{mix}}^{(n)}(1-\varepsilon)}}, 
\end{align*}
which converges to $0$ as $n \to \infty$ by \eqref{eq:main-assumption}.
Hence the sequence $\left(\mathcal{X}_{(n)},\,Q_{(n)},\pi_{(n)}\right)_{n}$ satisfies the assumption of Theorem \ref{thm:main},
and therefore, by Theorem \ref{thm:main}, for every $\varepsilon \in (0,1)$,
\begin{equation}\label{eq:Q-cutoff}
\lim_{n\to\infty} \frac{t_{\mathrm{mix}}^{Q,(n)}(\varepsilon)}{t_{\mathrm{mix}}^{Q,(n)}(1-\varepsilon)} = 1.
\end{equation}
Furthermore, in the same manner as in the proof of Theorem \ref{thm:main} (see \eqref{eq:tmix-infty}),
we see that $\lim_{n\to\infty} t_{\mathrm{mix}}^{Q,(n)}(\eta) = \infty$ for every $\eta \in \left(1/2,1\right)$.
Since $t_{\mathrm{mix}}^{Q,(n)}(\cdot)$ is non-increasing, it holds that
\begin{equation}\label{eq:Q-tmix-infty}
\lim_{n\to\infty} t_{\mathrm{mix}}^{Q,(n)}(\varepsilon) = \infty \quad \text{for every } \varepsilon \in (0,1).
\end{equation}

Let $F \coloneqq \left\{\left(\mathcal{X}_{(n)},\,Q_{(n)},\pi_{(n)}\right) \right\}_{n \ge 1}$.
In the notation of \cite{CSC2013},
the family of the $\delta$-lazy walks associated with $F$ is
\[ F_{\delta} = \left\{\left(\mathcal{X}_{(n)},\,\delta I + (1-\delta) Q_{(n)},\pi_{(n)}\right) \right\}_{n \ge 1} = \left\{\left(\mathcal{X}_{(n)},\,P_{(n)},\pi_{(n)}\right)  \right\}_{n \ge 1}, \]
and the family of the continuous-time chains associated with $F$ is
$F_{c} = \left\{\left(\mathcal{X}_{(n)},\,(\mathscr{P}_t^{Q,(n)})_{t \ge 0},\pi_{(n)}\right)  \right\}_{n \ge 1}$.
The total variation mixing times of the $n$-th chains of $F_{\delta}$ and $F_{c}$ are $t_{\mathrm{mix},d}^{(n)}(\cdot)$ and $t_{\mathrm{mix}}^{Q,(n)}(\cdot)$, respectively.

We now show that $F_{c}$ has a total variation cutoff in the sense of \cite[Definition 2.1]{CSC2013}.
Let $0 < \varepsilon < \eta < 1$ and $\alpha \coloneqq \min\{\varepsilon, 1-\eta\}$.
Then $\alpha \in \left(0, 1/2\right]$ and $\alpha \le \varepsilon < \eta \le 1-\alpha$, and hence, for every sufficiently large $n$,
\[ 1 \le \frac{t_{\mathrm{mix}}^{Q,(n)}(\varepsilon)}{t_{\mathrm{mix}}^{Q,(n)}(\eta)} \le \frac{t_{\mathrm{mix}}^{Q,(n)}(\alpha)}{t_{\mathrm{mix}}^{Q,(n)}(1-\alpha)}. \]
By this and \eqref{eq:Q-cutoff},
\begin{equation}\label{eq:Q-ratio-pairs}
\lim_{n\to\infty} \frac{t_{\mathrm{mix}}^{Q,(n)}(\varepsilon)}{t_{\mathrm{mix}}^{Q,(n)}(\eta)} = 1 \quad \text{for every } 0 < \varepsilon < \eta < 1.
\end{equation}
By \eqref{eq:Q-tmix-infty}, \eqref{eq:Q-ratio-pairs} and \cite[Remark 2.1]{CSC2013},
$F_{c}$ has a total variation cutoff.

By \eqref{eq:Q-tmix-infty}, the assumption of \cite[Theorem 3.1]{CSC2013} is satisfied for $F$.
Therefore, by \cite[Theorem 3.1]{CSC2013}, $F_{\delta}$ has a total variation cutoff, and furthermore,
\begin{equation}\label{eq:CSC-ratio}
\lim_{n\to\infty} \frac{t_{\mathrm{mix}}^{Q,(n)}(\varepsilon)}{t_{\mathrm{mix},d}^{(n)}(\varepsilon)} = 1-\delta \quad \text{for every } \varepsilon \in (0,1).
\end{equation}
By \eqref{eq:Q-tmix-infty} and \eqref{eq:CSC-ratio},
we see that $\lim_{n\to\infty} t_{\mathrm{mix},d}^{(n)}(\varepsilon) = \infty$ for every $\varepsilon \in (0,1)$;
in particular, $t_{\mathrm{mix},d}^{(n)}(\varepsilon) > 0$ for every sufficiently large $n$.
Hence, for every $\varepsilon \in (0,1)$,
\[ \frac{t_{\mathrm{mix},d}^{(n)}(\varepsilon)}{t_{\mathrm{mix},d}^{(n)}(1-\varepsilon)}
= \frac{t_{\mathrm{mix},d}^{(n)}(\varepsilon)}{t_{\mathrm{mix}}^{Q,(n)}(\varepsilon)} \cdot
\frac{t_{\mathrm{mix}}^{Q,(n)}(\varepsilon)}{t_{\mathrm{mix}}^{Q,(n)}(1-\varepsilon)} \cdot
\frac{t_{\mathrm{mix}}^{Q,(n)}(1-\varepsilon)}{t_{\mathrm{mix},d}^{(n)}(1-\varepsilon)} \]
\[ \to \frac{1}{1-\delta} \cdot 1 \cdot (1-\delta) = 1, \quad n \to \infty, \]
by \eqref{eq:Q-cutoff} and \eqref{eq:CSC-ratio}.
This completes the proof.
\end{proof}

\begin{Rem}\label{rem:discrete-continuous}
By Lemma \ref{lem:lazy-comparison} (6) and \eqref{eq:CSC-ratio},
under the assumptions of Theorem \ref{thm:discrete},
\[ \lim_{n\to\infty} \frac{t_{\mathrm{mix}}^{(n)}(\varepsilon)}{t_{\mathrm{mix},d}^{(n)}(\varepsilon)} = 1 \quad \text{for every } \varepsilon \in (0,1), \]
that is, the discrete mixing time of $P_{(n)}$ is asymptotically equivalent to the mixing time of the continuous-time chain associated with $(\mathcal{X}_{(n)}, P_{(n)})$.
\end{Rem}

\section{Examples}\label{sec:example}

\subsection{Abelian groups}\label{subsec:abelian}

We show that the Bakry--\'Emery curvature of the random walk on a finite Abelian group associated with a generating set is non-negative,
where the generating set is not assumed to be symmetric with respect to the group operation.
This extends the finite-group case of \cite[Theorem 2.3]{KKRT2016}, in which the generating set is assumed to be symmetric.

In this subsection, we write finite Abelian groups {\it multiplicatively}.
Let $G$ be a finite Abelian group and let $S$ be a non-empty generating set of $G$.
Define the stochastic matrix $P_{S}$ associated with $S$ on $G$ by
\begin{equation}\label{eq:tr-pb-def} 
P_{S}(x, xs) \coloneqq \frac{1}{|S|}, \quad x \in G,\ s \in S, 
\end{equation}
and $P_{S}(x,y) \coloneqq 0$ if $y \notin xS \coloneqq \left\{xs \,\middle|\, s \in S\right\}$. 
Let $S^{-1} \coloneqq \left\{s^{-1} \,\middle|\, s \in S\right\}$. 

For a stochastic matrix $R$ on a finite set $\mathcal{X}$,
we denote by $\mathcal{L}^{R}$ and $\Gamma^{R}$ the Laplacian and the carr\'e du champ defined for $(\mathcal{X}, R)$,
as in the proof of Lemma \ref{lem:lazy-comparison},
and define the iterated gradient by
\[ \Gamma_{2}^{R}(f)(x) \coloneqq \frac{1}{2} \mathcal{L}^{R}\Gamma^{R}(f)(x) - \Gamma^{R}(f, \mathcal{L}^{R} f)(x), \quad f \colon \mathcal{X} \to \mathbb{R},\ x \in \mathcal{X}. \]
By the definition of the Bakry--\'Emery curvature,
if $\Gamma_{2}^{R}(f)(x) \ge 0$ for every $f \colon \mathcal{X} \to \mathbb{R}$ and $x \in \mathcal{X}$,
then $\textup{BE}(\mathcal{X}, R) \ge 0$. 

The following corresponds to \cite[Theorem 2.3]{KKRT2016}. 

\begin{Thm}\label{thm:abelian}
Let $G$ be a finite Abelian group and let $S$ be a non-empty generating set of $G$. 
Then $(G, P_{S})$ is a finite irreducible Markov chain whose stationary distribution is the uniform distribution on $G$, and
$\textup{BE}(G, P_{S}) \ge 0$.
\end{Thm}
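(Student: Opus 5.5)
The plan is to dispose of the structural claims first and then compute $\Gamma_2$ explicitly via translation operators.

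\emph{Structural claims.} Irreducibility holds because $S$ generates the finite group $G$: every $g\in G$ is a product of elements of $S$. No inverses are needed, since $s^{-1}=s^{\operatorname{ord}(s)-1}$. The uniform distribution is stationary because $P_S$ is doubly stochastic. Indeed, for fixed $y$, the map $s\mapsto ys^{-1}$ is injective, so $\sum_x P_S(x,y)=|\{s\in S\}|/|S|=1$.

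\emph{Setting up the curvature computation.} By the remark preceding the theorem, it suffices to show $\Gamma_2(f)(x)\ge 0$ for every $f$ and $x$. For $s\in G$ define the translation $(T_s f)(x)\coloneqq f(xs)$ and the difference operator $\nabla_s f\coloneqq T_sf-f$. Then
\[ \mathcal{L}=\frac{1}{|S|}\sum_{s\in S}\nabla_s, \qquad \Gamma(f,g)=\frac{1}{2|S|}\sum_{s\in S}\nabla_s f\,\nabla_s g. \]
Commutativity of $G$ gives $T_sT_u=T_uT_s$, and hence all the $\nabla_s$ commute with each other and with $\mathcal{L}$.

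\emph{Computing $\Gamma_2$.} Write $a_s\coloneqq\nabla_s f(x)$ and $b_{s,u}\coloneqq\nabla_u\nabla_s f(x)$; note $b_{s,u}=b_{u,s}$ by commutativity. Applying the product rule $\nabla_u(gh)=\nabla_u g\,h+g\,\nabla_u h+\nabla_u g\,\nabla_u h$ to $g=h=\nabla_s f$ gives
\[ \tfrac12\mathcal{L}\Gamma(f)=\frac{1}{4|S|^2}\sum_{s,u}\bigl(2a_s b_{s,u}+b_{s,u}^2\bigr). \]
Using $\nabla_s\mathcal{L}f=\mathcal{L}\nabla_s f$, we get
\[ \Gamma(f,\mathcal{L}f)=\frac{1}{2|S|^2}\sum_{s,u}a_s b_{s,u}. \]
Therefore
\[ \Gamma_2(f)(x)=\frac{1}{4|S|^2}\sum_{s,u\in S} b_{s,u}^2\ \ge 0. \]

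\emph{Main point of care.} The cancellation of the cross terms is the step where commutativity and the symmetry $b_{s,u}=b_{u,s}$ are essential. In particular, the identity $\nabla_u\nabla_s=\nabla_s\nabla_u$ must hold as operators, and this holds because $(xs)u=(xu)s$. Symmetry of $S$ is never used. Once the cross terms cancel, the conclusion $\textup{BE}(G,P_S)\ge 0$ follows from the definition of the Bakry--\'Emery curvature.
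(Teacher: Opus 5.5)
Your proof is correct, and the structural part (finite order of the generators for irreducibility, double stochasticity for stationarity) is the same as the paper's.

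For the curvature bound you reach exactly the paper's identity \eqref{eq:abelian-squares}, since $b_{s,u}=\nabla_u\nabla_s f(x)=f(xus)-f(xu)-f(xs)+f(x)$. The derivation, however, is different.

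The paper first normalizes $f(x)=0$. It then applies the general pointwise formula of Lemma~\ref{lem:gamma2-formula}, which is valid for every stochastic matrix, and expands it for $P_S$. Next it symmetrizes $\sum_{s,t}f(xst)f(xs)$ using $xst=xts$, and finally completes the square.

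You instead work with the difference operators $\nabla_s$. The discrete product rule produces the squares $b_{s,u}^2$ directly, and the cross terms cancel term by term.

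What your route buys:
\begin{itemize}
\item It avoids the normalization, the auxiliary lemma and the square completion.
\item It pinpoints the single place where commutativity enters, namely $\nabla_s\mathcal{L}=\mathcal{L}\nabla_s$ for $s\in S$. The symmetry $b_{s,u}=b_{u,s}$ that you emphasize is not needed beyond this.
\item Consequently, the same computation goes through verbatim whenever the translations $T_s$, $s\in S$, commute with $\mathcal{L}$.
\end{itemize}

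What the paper's route buys: it isolates a formula for $\Gamma_2$ that holds for an arbitrary chain, and it reduces the Abelian case to an elementary expansion.
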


We do not assume that $S$ is symmetric, that is, it can happen that $S \ne S^{-1}$. 
In order to prove Theorem \ref{thm:abelian}, we use the following lemma, which holds for every stochastic matrix.

\begin{Lem}\label{lem:gamma2-formula}
Let $R$ be a stochastic matrix on a finite set $\mathcal{X}$.
Let $x \in \mathcal{X}$ and $f \colon \mathcal{X} \to \mathbb{R}$ with $f(x) = 0$.
Then
\[ 2\,\Gamma_{2}^{R}(f)(x) = \left(\sum_{y \in \mathcal{X}} R(x,y) f(y)\right)^{2} - \sum_{y \in \mathcal{X}} R(x,y) f(y)^{2} \]
\[ + \frac{1}{2} \sum_{y, z \in \mathcal{X}} R(x,y) R(y,z) \left(f(z) - 2f(y)\right)^{2}. \]
\end{Lem}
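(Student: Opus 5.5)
Since $f(x)=0$, every term in $2\Gamma_2^R(f)(x) = \mathcal{L}^R\Gamma^R(f)(x) - 2\Gamma^R(f,\mathcal{L}^R f)(x)$ can be expanded explicitly using $\sum_y R(x,y)=1$. No symmetry or reversibility is needed, because all sums run forward along transitions of $R$. The plan is to write both terms as finite sums indexed by $y$ and $(y,z)$, and then complete the square in the double sum.

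\emph{Step 1: the term $\mathcal{L}^R\Gamma^R(f)(x)$.} By definition,
\[ \mathcal{L}^R\Gamma^R(f)(x) = \sum_y R(x,y)\,\Gamma^R(f)(y) - \Gamma^R(f)(x). \]
We have $\Gamma^R(f)(x) = \tfrac12\sum_y R(x,y) f(y)^2$ because $f(x)=0$. For the other part,
\[ \Gamma^R(f)(y) = \tfrac12 \sum_z R(y,z)\,\bigl(f(z)-f(y)\bigr)^2 . \]
Hence
\[ \mathcal{L}^R\Gamma^R(f)(x) = \tfrac12\sum_{y,z}R(x,y)R(y,z)\bigl(f(z)-f(y)\bigr)^2 - \tfrac12\sum_y R(x,y)f(y)^2. \]

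\emph{Step 2: the term $2\Gamma^R(f,\mathcal{L}^Rf)(x)$.} Write $g=\mathcal{L}^R f$. Using $f(x)=0$,
\[ 2\Gamma^R(f,g)(x) = \sum_y R(x,y)\,f(y)\bigl(g(y)-g(x)\bigr). \]
Here $g(x)=\sum_y R(x,y)f(y) =: m$ and $g(y)=\sum_z R(y,z)f(z) - f(y)$. Therefore
\[ 2\Gamma^R(f,\mathcal{L}^Rf)(x) = \sum_{y,z} R(x,y)R(y,z)f(y)f(z) - \sum_y R(x,y)f(y)^2 - m^2 . \]

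\emph{Step 3: combine and complete the square.} Subtracting Step 2 from Step 1 gives
\[ 2\Gamma_2^R(f)(x) = m^2 + \tfrac12\sum_y R(x,y)f(y)^2 + \sum_{y,z}R(x,y)R(y,z)\Bigl[\tfrac12(f(z)-f(y))^2 - f(y)f(z)\Bigr]. \]
The bracket equals
\[ \tfrac12 f(z)^2 - 2f(y)f(z) + \tfrac12 f(y)^2 = \tfrac12\bigl(f(z)-2f(y)\bigr)^2 - \tfrac32 f(y)^2 . \]
Since $\sum_z R(y,z)=1$, the $-\tfrac32 f(y)^2$ part contributes $-\tfrac32\sum_y R(x,y)f(y)^2$. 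Combining this with the $+\tfrac12\sum_y R(x,y)f(y)^2$ already present leaves $-\sum_y R(x,y)f(y)^2$, which yields the claimed identity.

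The only delicate point is the bookkeeping of the factor $\tfrac12$ and of the sign conventions in $\Gamma$ and $\mathcal{L}$. The completion of the square in Step 3 is the step where an arithmetic slip is most likely, so I would check it on a two-point example.
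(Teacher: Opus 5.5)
Your proof is correct and follows the paper's argument. Both expand $\mathcal{L}^R\Gamma^R(f)(x)$ and $2\Gamma^R(f,\mathcal{L}^R f)(x)$ explicitly using $f(x)=0$, then complete the square in the double sum using $\sum_z R(y,z)=1$; the only cosmetic difference is that the paper keeps the cross term as $f(y)(f(z)-f(y))$ and applies $\tfrac12(a-b)^2 - b(a-b) = \tfrac12(a-2b)^2 - \tfrac12 b^2$, whereas you expand $\mathcal{L}^R f(y)$ first, and your bookkeeping of the $\tfrac12$, $\tfrac32$ and $-1$ coefficients checks out.
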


\begin{proof}
Since $f(x) = 0$, we see that
\[ \Gamma^{R}(f)(x) = \frac{1}{2} \sum_{y \in \mathcal{X}} R(x,y) f(y)^{2}, \quad \mathcal{L}^{R} f(x) = \sum_{y \in \mathcal{X}} R(x,y) f(y), \]
and hence
\begin{align*} 
\mathcal{L}^{R}\Gamma^{R}(f)(x) &= \sum_{y \in \mathcal{X}} R(x,y)\,\Gamma^{R}(f)(y) - \Gamma^{R}(f)(x) \\
&= \frac{1}{2} \sum_{y, z \in \mathcal{X}} R(x,y) R(y,z) \left(f(z) - f(y)\right)^{2} - \frac{1}{2} \sum_{y \in \mathcal{X}} R(x,y) f(y)^{2}. 
\end{align*}
Furthermore, by $f(x) = 0$,
\[ \Gamma^{R}(f, \mathcal{L}^{R} f)(x) = \frac{1}{2} \sum_{y \in \mathcal{X}} R(x,y) f(y) \left(\mathcal{L}^{R} f(y) - \mathcal{L}^{R} f(x)\right) \]
\[ = \frac{1}{2} \sum_{y, z \in \mathcal{X}} R(x,y) R(y,z)\, f(y) \left(f(z) - f(y)\right) - \frac{1}{2} \left(\sum_{y \in \mathcal{X}} R(x,y) f(y)\right)^{2}. \]
By the elementary identity
$\frac{1}{2}(a - b)^{2} - b(a - b) = \frac{1}{2}(a - 2b)^{2} - \frac{1}{2} b^{2}$ for $a, b \in \mathbb{R}$,
applied to $a = f(z)$ and $b = f(y)$, together with $\sum_{z \in \mathcal{X}} R(y,z) = 1$,
\[ \frac{1}{2} \sum_{y, z \in \mathcal{X}} R(x,y) R(y,z) \left(f(z) - f(y)\right)^{2} - \sum_{y, z \in \mathcal{X}} R(x,y) R(y,z)\, f(y) \left(f(z) - f(y)\right) \]
\[ = \frac{1}{2} \sum_{y, z} R(x,y) R(y,z) \left(f(z) - 2f(y)\right)^{2} - \frac{1}{2} \sum_{y} R(x,y) f(y)^{2}. \]
Therefore,
\[ 2\,\Gamma_{2}^{R}(f)(x) = \mathcal{L}^{R}\Gamma^{R}(f)(x) - 2\,\Gamma^{R}(f, \mathcal{L}^{R} f)(x) \]
\[ = \left(\sum_{y \in \mathcal{X}} R(x,y) f(y)\right)^{2} - \sum_{y \in \mathcal{X}} R(x,y) f(y)^{2}  + \frac{1}{2} \sum_{y, z \in \mathcal{X}} R(x,y) R(y,z) \left(f(z) - 2f(y)\right)^{2}. \qedhere \]
\end{proof}

\begin{proof}[Proof of Theorem \ref{thm:abelian}]
Let $d \coloneqq |S|$.
Every $s \in S$ has finite order, so $s^{-1}$ is a non-negative power of $s$.
Thus $S$ generates $G$ as a semigroup, and $(G,P_S)$ is irreducible.
Moreover, $\sum_{x \in G} P_S(x,y) = \frac{|yS^{-1}|}{d} = 1$ for every $y \in G$,
so the uniform distribution is stationary.

Fix $x \in G$ and $f \colon G \to \mathbb{R}$.
Since $\Gamma_2^{P_S}(f)(x)$ is unchanged by adding a constant to $f$, we can assume that $f(x)=0$.
By expanding the formula in Lemma \ref{lem:gamma2-formula}, we see that 
\begin{equation}\label{eq:abelian-expansion}
2d^2\,\Gamma_2^{P_S}(f)(x) = \left(\sum_{s \in S} f(xs)\right)^2+d\sum_{s \in S} f(xs)^2 + \frac12\sum_{s,t \in S}\left(f(xst)^2-4f(xst)f(xs)\right).
\end{equation}
Since $xst=xts$, interchanging $s$ and $t$ yields
\[ \sum_{s,t \in S} f(xst)f(xs) = \frac12\sum_{s,t \in S} f(xst)\left(f(xs)+f(xt)\right). \]
Also,
\[ \frac12 \sum_{s,t \in S}\left(f(xs)+f(xt)\right)^2 = d\sum_{s \in S} f(xs)^2+\left(\sum_{s \in S} f(xs)\right)^2.\]

By substituting these identities into \eqref{eq:abelian-expansion} and completing the square,
we obtain that 
\begin{equation}\label{eq:abelian-squares}
\Gamma_2^{P_S}(f)(x) = \frac{1}{4d^2}\sum_{s,t \in S}\left(f(xst)-f(xs)-f(xt)+f(x)\right)^2\ge0.
\end{equation}
Here the sum is over all ordered pairs $(s,t) \in S^2$.
Thus $\textup{BE}(G,P_S)\ge0$.
\end{proof}

\begin{Exa}\label{exa:z12}
We give a finite Abelian group with an asymmetric generating set.
Consider the quotient group $G = \mathbb{Z} / 12\mathbb{Z}$, which we write additively, so that $S^{-1}$ reads $-S = \left\{-s \,\middle|\, s \in S\right\}$ for $S \subset G$.
Let $S = \{1,4,7\}$ and let $P_S$ be the stochastic matrix associated with $S$.
Then $S \ne -S$, $S$ generates $G$, and $k_{(G, P_S)} = 2$. 
Indeed, $S$ generates $G$ since $1 \in S$.
Furthermore, $-1 = 4 + 7$, $-4 = 1 + 7$ and $-7 = 1 + 4$, while none of $-1$, $-4$ and $-7$ belongs to $S$.
Hence $S \ne -S$ and $d_{P_S}(0 \to -s) = 2$ for every $s \in S$, and $k_{(G, P_S)} = 2$ follows from Lemma \ref{lem:cayley-spectral} (3) below.
\end{Exa}

\subsection{A random example on $(\mathbb{Z}/3\mathbb{Z})^{d}$}\label{subsec:random}

In this subsection, we give a sequence of random walks on finite Abelian groups with \emph{non-symmetric} generating sets to which Theorem \ref{thm:main} applies almost surely.
It is a counterpart, without the symmetric support condition, of \cite[Corollary 4]{Salez2024},
where the generating sets are symmetric.
The groups are $(\mathbb{Z}/3\mathbb{Z})^{d}$, $d \ge 1$.
The reason for this choice is that every non-zero element of $(\mathbb{Z}/3\mathbb{Z})^{d}$ has order $3$,
which forces $k_{(G, P_{S})} \le 2$ for \emph{every} generating set $S$ (Lemma \ref{lem:k-exponent-3} below).
Consequently the whole generating set can be chosen at random, without any symmetrization.

Throughout this subsection, we deal with additive groups and write finite Abelian groups {\it additively}, with identity $0$.
Thus, for a generating set $S$ of a finite Abelian group $G$, the stochastic matrix $P_{S}$ of \eqref{eq:tr-pb-def} reads
\[ P_{S}(x, x+s) = \frac{1}{|S|}, \quad x \in G,\ s \in S, \]
and $S^{-1}$ reads $-S \coloneqq \left\{-s \,\middle|\, s \in S\right\}$. 

We deal with the Fourier analysis on finite groups. 
We denote by $\widehat{G}$ the group of characters of $G$, that is, of homomorphisms $\chi \colon G \to \left\{z \in \mathbb{C} \,\middle|\, |z| = 1\right\}$,
and by $1$ the trivial character.
For a non-empty subset $X$ of $G$, let
\[ \widehat{p_{X}}(\chi) \coloneqq \frac{1}{|X|} \sum_{x \in X} \chi(x), \quad \chi \in \widehat{G}, \qquad
   \lambda_{*}(X) \coloneqq \max_{\chi \in \widehat{G},\ \chi \ne 1} \mathrm{Re}\, \widehat{p_{X}}(\chi). \]

We first express the quantities appearing in \eqref{eq:main-assumption} in terms of $S$.

\begin{Lem}\label{lem:cayley-spectral}
Let $G$ be a finite Abelian group with $|G| \ge 2$ and let $S$ be a generating set of $G$.
Then the following hold for $(G, P_{S})$.\\
(1) $\Delta(P_{S}) = |S|$.\\
(2) $\lambda_{*}(S) < 1$ and $t_{\mathrm{rel}} = \left(1 - \lambda_{*}(S)\right)^{-1}$.\\
(3) $k_{(G, P_{S})} = \max\left\{ d_{P_{S}}(0 \to -s) \,\middle|\, s \in S \setminus \{0\} \right\}$.\\
(4) $k_{(G, P_{S})} = 1$ if and only if $S = -S$.
\end{Lem}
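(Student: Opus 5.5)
The plan is to verify the four items separately, using throughout two facts: translation invariance of $P_S$, and the fact that the characters of $G$ form an orthonormal eigenbasis of $P_S$ in $L^2(\text{uniform})$.

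For (1), fix $x \in G$. The map $s \mapsto x+s$ is injective, so every off-diagonal entry $P_S(x,y)>0$ equals exactly $1/|S|$. This holds whether or not $0 \in S$, since $0$ only affects the diagonal. Because $|G|\ge 2$ and $S$ generates $G$, some $s\in S\setminus\{0\}$ exists, so the set of off-diagonal positive entries is non-empty. Hence $\Delta(P_S)=|S|$.

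For (3), write $d(x\to y)$ for $d_{P_S}(x\to y)$. Translation invariance gives $P_S^k(x,y)=P_S^k(0,y-x)$, so $d(x\to y)=d(0\to y-x)$. The condition $d(x\to y)=1$ means $y\ne x$ and $y-x\in S$, that is, $y-x=s\in S\setminus\{0\}$. In that case $d(y\to x)=d(0\to -s)$, and taking the maximum over such pairs gives (3). For (4), $k_{(G,P_S)}=1$ holds if and only if $d(0\to -s)=1$ for every $s\in S\setminus\{0\}$, i.e. $-s\in S$ for all such $s$. Since $0=-0$ imposes no condition, this is equivalent to $S=-S$. Alternatively, (4) follows from the general remark after Definition \ref{def:k} on symmetric support, since $P_S(x,y)>0 \iff y-x\in S$.

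For (2), first check that the uniform distribution $\pi$ is stationary (Theorem \ref{thm:abelian}). Next compute, for $\chi \in \widehat G$,
\[ P_S\chi(x)=\frac{1}{|S|}\sum_{s\in S}\chi(x+s)=\widehat{p_S}(\chi)\,\chi(x), \]
so each character is an eigenfunction. The characters form an orthonormal basis of $L^2(\pi)$. Extending $\mathcal E$ sesquilinearly, or working with real $f$ directly, the definition of $\Gamma$ together with the double stochasticity of $P_S$ gives
\[ \mathcal E(f)=\langle f,f\rangle_\pi-\mathrm{Re}\langle P_Sf,f\rangle_\pi . \]
Expanding $f=\sum_\chi \hat f(\chi)\chi$ then yields
\[ \mathcal E(f)=\sum_{\chi}|\hat f(\chi)|^2\bigl(1-\mathrm{Re}\,\widehat{p_S}(\chi)\bigr), \qquad \mathrm{Var}_\pi(f)=\sum_{\chi\ne 1}|\hat f(\chi)|^2 . \]

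It remains to show $\lambda_*(S)<1$. Since $|\chi(s)|=1$, we have $\mathrm{Re}\,\widehat{p_S}(\chi)=1$ only if $\chi(s)=1$ for all $s\in S$. Because $S$ generates $G$, this forces $\chi=1$. Hence $\lambda_*(S)<1$, and consequently $\mathcal E(f)>0$ for every non-constant $f$.

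The ratio $\mathrm{Var}_\pi(f)/\mathcal E(f)$ is therefore at most $(1-\lambda_*(S))^{-1}$. To show equality is attained with a real function, take $f=\mathrm{Re}\,\chi_0$ or $\mathrm{Im}\,\chi_0$ for a maximizing $\chi_0$. Both $\chi_0$ and $\bar\chi_0$ have the same $\mathrm{Re}\,\widehat{p_S}$, so the ratio equals $(1-\lambda_*(S))^{-1}$ exactly.

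The only mildly delicate point is this passage between real functions, as used in \eqref{eq:def-relax}, and complex characters; the rest is routine bookkeeping.
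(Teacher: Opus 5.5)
Your proof is correct and follows essentially the same route as the paper: translation invariance for (1), (3) and (4), and for (2) a Fourier/Plancherel expression of $\mathrm{Var}_\pi$ and $\mathcal{E}$ with the extremal function $\mathrm{Re}\,\chi_0$. The only difference is that you obtain the Dirichlet form through $\mathcal{E}(f)=\langle f,f\rangle_\pi-\mathrm{Re}\langle P_S f,f\rangle_\pi$ and the eigenfunction relation, where the paper applies Plancherel to each increment $f(\cdot+s)-f(\cdot)$; this is an equivalent bookkeeping choice. One small caution: commit to $\mathrm{Re}\,\chi_0$ rather than ``$\mathrm{Re}\,\chi_0$ or $\mathrm{Im}\,\chi_0$'', since $\mathrm{Im}\,\chi_0\equiv 0$ when $\chi_0$ is real-valued, whereas $\mathrm{Re}\,\chi_0$ is non-constant for every $\chi_0\ne 1$.
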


\begin{proof}
Let $N \coloneqq |G|$ and let $\pi$ be the uniform distribution on $G$, which is the stationary distribution of $(G, P_{S})$ by Theorem \ref{thm:abelian}.

(1)
For $x, y \in G$ with $x \ne y$, we have $P_{S}(x,y) > 0$ if and only if $y - x \in S$, and in this case $P_{S}(x,y) = 1/|S|$.
Since $|G| \ge 2$ and $S$ generates $G$, we see that $S \setminus \{0\} \ne \emptyset$, and hence there exist $x \ne y$ with $P_{S}(x,y) > 0$.
Therefore $\Delta(P_{S}) = |S|$.

(2)
For $f \colon G \to \mathbb{R}$, let $\widehat{f}(\chi) \coloneqq \frac{1}{N} \sum_{x \in G} f(x) \overline{\chi(x)}$, $\chi \in \widehat{G}$. 
This is the Fourier transform of $f$. 
Then the inversion formula says that $f = \sum_{\chi \in \widehat{G}} \widehat{f}(\chi) \chi$ and the Plancherel formula says that $\frac{1}{N} \sum_{x \in G} f(x)^{2} = \sum_{\chi \in \widehat{G}} |\widehat{f}(\chi)|^{2}$.
Since $\mathbb{E}_{\pi}[f] = \widehat{f}(1)$, we obtain that 
\begin{equation}\label{eq:var-char}
\mathrm{Var}_{\pi}(f) = \sum_{\chi \ne 1} |\widehat{f}(\chi)|^{2}. 
\end{equation} 

Fix $s \in S$. Since $f(x+s) - f(x) = \sum_{\chi} \widehat{f}(\chi) \left(\chi(s) - 1\right) \chi(x)$ for $x \in G$,
\[ \frac{1}{N} \sum_{x \in G} \left(f(x+s) - f(x)\right)^{2} = \sum_{\chi} |\widehat{f}(\chi)|^{2} \left|\chi(s) - 1\right|^{2}  = 2 \sum_{\chi} |\widehat{f}(\chi)|^{2} \left(1 - \mathrm{Re}\,\chi(s)\right). \]
Averaging over $s \in S$, we obtain that 
\begin{equation}\label{eq:DF-char} 
\mathcal{E}(f) = \frac{1}{2N|S|} \sum_{x \in G} \sum_{s \in S} \left(f(x+s) - f(x)\right)^{2}   = \sum_{\chi \ne 1} |\widehat{f}(\chi)|^{2} \left(1 - \mathrm{Re}\, \widehat{p_{S}}(\chi)\right). 
\end{equation}
If $\mathrm{Re}\, \widehat{p_{S}}(\chi) = 1$ for some $\chi \in \widehat{G}$, then $\chi(s) = 1$ for every $s \in S$, and hence $\chi = 1$ on the subgroup generated by $S$, which is $G$. 
Hence $\chi \equiv 1$ on $G$. 
This shows $\lambda_{*}(S) < 1$. 
By \eqref{eq:var-char} and \eqref{eq:DF-char}, 
$\mathcal{E}(f) \ge \left(1 - \lambda_{*}(S)\right) \mathrm{Var}_{\pi}(f)$ for every $f$,
and hence $t_{\mathrm{rel}} \le \left(1 - \lambda_{*}(S)\right)^{-1}$.

Conversely, let $\chi_{0} \in \widehat{G} \setminus \{1\}$ satisfy $\mathrm{Re}\, \widehat{p_{S}}(\chi_{0}) = \lambda_{*}(S)$ and let $f_{0} \coloneqq \mathrm{Re}\, \chi_{0} = \frac{1}{2}\left(\chi_{0} + \overline{\chi_{0}}\right)$.
Then $f_{0}$ is real-valued and non-constant, $\widehat{f_{0}}$ vanishes outside $\{\chi_{0}, \overline{\chi_{0}}\}$,
and $\mathrm{Re}\, \widehat{p_{S}}(\overline{\chi_{0}}) = \mathrm{Re}\, \overline{\widehat{p_{S}}(\chi_{0})} = \lambda_{*}(S)$.
By \eqref{eq:DF-char}, 
$\mathcal{E}(f_{0}) = \left(1 - \lambda_{*}(S)\right) \mathrm{Var}_{\pi}(f_{0})$. 
Since $\chi_0 \ne 1$, $\mathrm{Var}_{\pi}(f_{0}) > 0$. 
By \eqref{eq:def-relax}, $t_{\mathrm{rel}} \ge \left(1 - \lambda_{*}(S)\right)^{-1}$.

(3)
Since $P_{S}(x+g, y+g) = P_{S}(x,y)$ for every $x, y, g \in G$, we have $P_{S}^{k}(x+g, y+g) = P_{S}^{k}(x,y)$ for every $k \in \mathbb{N}_{0}$ by induction,
and hence $d_{P_{S}}(x \to y) = d_{P_{S}}(0 \to y - x)$.
By the definition of $d_{P_{S}}$, we have $d_{P_{S}}(x \to y) = 1$ if and only if $y \ne x$ and $P_{S}(x,y) > 0$, that is, $y - x \in S \setminus \{0\}$.
For such $x, y$, putting $s \coloneqq y - x$, we obtain $d_{P_{S}}(y \to x) = d_{P_{S}}(0 \to x - y) = d_{P_{S}}(0 \to -s)$.
Taking the maximum over all such pairs yields the assertion.

(4)
Assume $S = -S$. 
For $s \in S \setminus \{0\}$, we have $-s \in S \setminus \{0\}$, and hence $P_{S}(0, -s) > 0$ and $-s \ne 0$, that is, $d_{P_{S}}(0 \to -s) = 1$.
By (3), $k_{(G, P_{S})} = 1$.

Conversely, assume $k_{(G, P_{S})} = 1$. 
By (3), $d_{P_{S}}(0 \to -s) = 1$ for every $s \in S \setminus \{0\}$, and hence $-s \in S$.
Since $-S \subset S$, and $|-S| = |S|$, we see that $S = -S$.
\end{proof}

\begin{Lem}\label{lem:k-exponent-3}
Let $d \ge 1$, $G \coloneqq (\mathbb{Z}/3\mathbb{Z})^{d}$, and let $S$ be a generating set of $G$.
Then $k_{(G, P_{S})} \le 2$, and $k_{(G, P_{S})} = 2$ if and only if $S \ne -S$.
\end{Lem}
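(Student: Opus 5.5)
The plan is to reduce everything to Lemma \ref{lem:cayley-spectral} (3) and (4), working in the additive group $G=(\mathbb{Z}/3\mathbb{Z})^{d}$. By Lemma \ref{lem:cayley-spectral} (3),
\[ k_{(G,P_S)} = \max\left\{ d_{P_S}(0\to -s) \,\middle|\, s\in S\setminus\{0\}\right\}, \]
so it suffices to show that $d_{P_S}(0\to -s)\le 2$ for every $s\in S\setminus\{0\}$. Note that $|G|\ge 3 \ge 2$, so the lemma applies.

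The key observation is that every element of $G$ has order dividing $3$. Hence, for $s\in S\setminus\{0\}$,
\[ -s = 2s = s+s. \]
Concretely, the path $0\to s\to 2s=-s$ uses two steps of size $s$. Each step has probability $1/|S|>0$, so $P_S^{2}(0,-s)\ge |S|^{-2}>0$. This gives $d_{P_S}(0\to -s)\le 2$, and taking the maximum over $s$ yields $k_{(G,P_S)}\le 2$.

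For the characterization, I combine this with Lemma \ref{lem:cayley-spectral} (4), which says $k_{(G,P_S)}=1$ if and only if $S=-S$. The discussion after Definition \ref{def:k} gives $k_{(G,P_S)}\ge 1$. Together with $k\le 2$, this forces $k_{(G,P_S)}\in\{1,2\}$, and hence $k_{(G,P_S)}=2$ exactly when $k\ne 1$, that is, when $S\ne -S$.

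No step is a real obstacle. The one point to check is that $S$ may contain $0$, and the definition of $k$ only ranges over $s\ne 0$, which is already built into Lemma \ref{lem:cayley-spectral} (3). Nothing beyond the exponent-$3$ identity $-s=2s$ is needed.
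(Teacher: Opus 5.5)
Your proposal is correct and follows essentially the same argument as the paper: it bounds $d_{P_S}(0\to -s)\le 2$ via the path $0\to s\to 2s=-s$ (so $P_S^{2}(0,-s)\ge |S|^{-2}>0$), then applies Lemma \ref{lem:cayley-spectral} (3) to get $k_{(G,P_S)}\le 2$. The characterization then follows from Lemma \ref{lem:cayley-spectral} (4) together with $k_{(G,P_S)}\ge 1$.
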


\begin{proof}
Let $s \in S \setminus \{0\}$. 
Since $3s = 0$, 
\[ P_{S}^{2}(0, -s) \ge P_{S}(0, s)\, P_{S}(s, s+s) = \frac{1}{|S|^{2}} > 0, \]
that is, $d_{P_{S}}(0 \to -s) \le 2$. 
By Lemma \ref{lem:cayley-spectral} (3), $k_{(G, P_{S})} \le 2$.
The second assertion follows from Lemma \ref{lem:cayley-spectral} (4) and $k_{(G, P_{S})} \ge 1$.
\end{proof}

We now consider random generating sets. The following lemma is a quantitative form of the fact that random Cayley graphs of Abelian groups are expanders \cite{AR1994}.

\begin{Lem}\label{lem:random-subset}
Let $G$ be a finite Abelian group with $N \coloneqq |G| \ge 2$, let $1 \le m \le N-1$,
and let $X$ be a random subset of $G$ which is uniformly distributed over the $m$-element subsets of $G \setminus \{0\}$.
Then, \\
(1) For every $\rho \in (0,1)$,
\[ \mathbb{P}\left(\lambda_{*}(X) \ge \rho\right) \le (N-1) \exp\left(-\frac{m \rho^{2}}{2}\right). \]
(2) If $-g \ne g$ for every $g \in G \setminus \{0\}$, then
\[ \mathbb{P}\left(X = -X\right) \le \frac{m-1}{N-2}. \]
\end{Lem}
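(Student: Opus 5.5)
For part (1), I will fix a non-trivial character $\chi$, bound $\mathbb{P}(\mathrm{Re}\,\widehat{p_X}(\chi) \ge \rho)$ by a sub-Gaussian tail estimate, and take a union bound over the $N-1$ non-trivial characters. Write $\mathrm{Re}\,\widehat{p_X}(\chi) = \frac{1}{m}\sum_{x \in X} \mathrm{Re}\,\chi(x)$. The set $X$ is a uniform sample of size $m$ drawn \emph{without replacement} from the population $G\setminus\{0\}$. The values $a_g \coloneqq \mathrm{Re}\,\chi(g)$ lie in $[-1,1]$.

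By orthogonality, $\sum_{g\in G}\chi(g)=0$, so $\sum_{g\ne 0} a_g = -1$. Hence the population mean is $\mu = -1/(N-1) < 0$. Hoeffding's inequality for sampling without replacement (Hoeffding 1963, Theorem 4, which says such samples are dominated in convex order by samples with replacement) then gives
\[ \mathbb{P}\left(\frac1m\sum_{x\in X} a_x - \mu \ge \rho\right) \le \exp\left(-\frac{2m\rho^2}{(1-(-1))^2}\right) = \exp\left(-\frac{m\rho^2}{2}\right). \]
Since $\mu<0$, the event $\{\mathrm{Re}\,\widehat{p_X}(\chi)\ge\rho\}$ is contained in the event on the left. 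Summing over the $N-1$ non-trivial $\chi$ yields (1).

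The only delicate point is to justify Hoeffding's bound without replacement. If I want the argument self-contained, I will reproduce the exponential moment step. For $\lambda>0$, the map $y\mapsto e^{\lambda y/m}$ is convex, so $\mathbb{E}\exp(\lambda\sum_{x\in X}a_x/m)$ is at most the corresponding quantity for i.i.d.\ uniform draws. Each i.i.d.\ factor is bounded by $e^{\lambda\mu/m+\lambda^2/(2m^2)}$ by Hoeffding's lemma. Optimizing over $\lambda$ then gives the bound.

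For part (2), I will exploit the hypothesis that $-g\ne g$ for $g\ne 0$. Under it, $G\setminus\{0\}$ splits into $(N-1)/2$ disjoint pairs $\{g,-g\}$, so $N$ is odd. If $m$ is odd, no set of size $m$ can be a union of such pairs, so $\mathbb{P}(X=-X)=0$. If $m$ is even, the symmetric sets are exactly the unions of $m/2$ pairs, so
\[ \mathbb{P}(X=-X)=\binom{(N-1)/2}{m/2}\Big/\binom{N-1}{m}. \]

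To bound this ratio, I will condition on the first element. Order $X$ as a uniformly random sequence $x_1,\dots,x_m$. The event $X=-X$ forces $-x_1\in\{x_2,\dots,x_m\}$. Conditionally on $x_1$, the remaining $m-1$ elements form a uniform $(m-1)$-subset of the $N-2$ elements of $G\setminus\{0,x_1\}$, and $-x_1$ is one of these elements. Hence the conditional probability that $-x_1$ is among them is exactly $(m-1)/(N-2)$, which gives (2).

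This argument works for every $m$, so no case split is needed. If $N=2$, the hypothesis fails, because the unique non-zero element then satisfies $-g=g$; so $N\ge3$ and the denominator $N-2$ is positive.
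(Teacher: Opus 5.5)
Your proposal is correct and follows the paper's proof step for step. For (1), the paper also applies Hoeffding's inequality for sampling without replacement to the population $\{\mathrm{Re}\,\chi(x)\}_{x\ne 0}$, whose mean is $-1/(N-1)<0$; it absorbs the negative mean by taking $t = m(\rho-\mu_\chi)\ge m\rho$, which is equivalent to your event inclusion, and then takes a union bound over the $N-1$ non-trivial characters. For (2), the paper uses exactly your conditioning on the first element of a uniformly random ordering to obtain $(m-1)/(N-2)$; your exact count of symmetric subsets and your remark that the hypothesis forces $N\ge 3$ are correct but not needed.
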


\begin{proof}
(1)
Fix $\chi \in \widehat{G}$ with $\chi \ne 1$. 
The numbers $\mathrm{Re}\, \chi(x)$, $x \in G \setminus \{0\}$, lie in $[-1,1]$. 
Since $\chi \ne 1$, there exists $a \in G$ such that $\chi(a) \ne 1$. 
Then, $\sum_{x \in G} \chi(x) = \sum_{x \in G} \chi(a+x) = \chi(a) \sum_{x \in G} \chi(x)$, and hence $\sum_{x \in G} \chi(x) = 0$. 
Therefore, 
\[ \mu_{\chi} \coloneqq \frac{1}{N-1} \sum_{x \in G \setminus \{0\}} \mathrm{Re}\, \chi(x) = -\frac{1}{N-1} < 0. \]
The random variable $m\, \mathrm{Re}\, \widehat{p_{X}}(\chi) = \sum_{x \in X} \mathrm{Re}\, \chi(x)$ is the sum of a sample of size $m$ drawn without replacement from this population.
Hoeffding's inequality \cite[Theorem 2]{Hoeffding1963} for sums of independent random variables with values in $[-1,1]$,
which remains valid for sampling without replacement by \cite[Section 6]{Hoeffding1963}, yields that for every $t > 0$,
\[ \mathbb{P}\left( \sum_{x \in X} \mathrm{Re}\, \chi(x) - m \mu_{\chi} \ge t \right) \le \exp\left(-\frac{t^{2}}{2m}\right). \]
Taking $t \coloneqq m\left(\rho - \mu_{\chi}\right) \ge m\rho$, we obtain that 
\[ \mathbb{P}\left( \mathrm{Re}\, \widehat{p_{X}}(\chi) \ge \rho \right) \le \exp\left(-\frac{m\left(\rho - \mu_{\chi}\right)^{2}}{2}\right) \le \exp\left(-\frac{m\rho^{2}}{2}\right). \]
Since $\lambda_{*}(X) \ge \rho$ if and only if $\mathrm{Re}\, \widehat{p_{X}}(\chi) \ge \rho$ for some $\chi \ne 1$,
the assertion follows by summing over the $N-1$ non-trivial characters.

(2)
Let $(X_{1}, \dots, X_{m})$ be uniformly distributed over the $m$-tuples of distinct elements of $G \setminus \{0\}$.
Then $\{X_{1}, \dots, X_{m}\}$ is uniformly distributed over the $m$-element subsets of $G \setminus \{0\}$, so we can assume that $X = \{X_{1}, \dots, X_{m}\}$.
If $X = -X$, then $-X_{1} \in X$, and $-X_{1} \ne X_{1}$ by assumption, so that $-X_{1} \in \{X_{2}, \dots, X_{m}\}$.
Conditionally on $X_{1}$, the tuple $(X_{2}, \dots, X_{m})$ is uniformly distributed over the $(m-1)$-tuples of distinct elements of $G \setminus \{0, X_{1}\}$,
a set of $N-2$ elements containing $-X_{1}$, and hence the conditional probability that $-X_{1} \in \{X_{2}, \dots, X_{m}\}$ equals $(m-1)/(N-2)$.
\end{proof}

\begin{Thm}\label{thm:z3d-random}
Let $c > 2 \log 3$, let $\rho \in \left(\sqrt{\frac{2 \log 3}{c}},\, 1\right)$, and let $\gamma \coloneqq \frac{c \rho^{2}}{2} - \log 3 > 0$.
For $d \ge 1$, let $G_{d} \coloneqq (\mathbb{Z}/3\mathbb{Z})^{d}$ and $m_{d} \coloneqq \lceil c d \rceil$,
and let $d_{0} \ge 1$ satisfy $m_{d} \le 3^{d} - 1$ for every $d \ge d_{0}$.
Let $(S_{d})_{d \ge d_{0}}$ be random subsets defined on a common probability space such that, for every $d \ge d_{0}$,
$S_{d}$ is uniformly distributed over the $m_{d}$-element subsets of $G_{d} \setminus \{0\}$.
Then the following hold.\\
(1) For every $d \ge d_{0}$, with probability at least $1 - e^{-\gamma d} - m_{d}\, 3^{1-d}$,
the set $S_{d}$ generates $G_{d}$, $S_{d} \ne -S_{d}$, and the finite irreducible Markov chain $(G_{d}, P_{S_{d}})$ satisfies
\[ k_{(G_{d}, P_{S_{d}})} = 2, \qquad \Delta(P_{S_{d}}) = m_{d}, \qquad t_{\mathrm{rel}} \le \frac{1}{1 - \rho}, \qquad \textup{BE}(G_{d}, P_{S_{d}}) \ge 0. \]
(2) Almost surely, there exists $d_{1} \ge d_{0}$ such that the conclusion of (1) holds for every $d \ge d_{1}$,
the sequence $\left(G_{d}, P_{S_{d}}\right)_{d \ge d_{1}}$ satisfies \eqref{eq:main-assumption}, and a cutoff occurs for it.
\end{Thm}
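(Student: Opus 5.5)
For (1), we fix $d \ge d_{0}$ and take $G = G_{d}$, $N = 3^{d}$, $m = m_{d}$, $X = S_{d}$. We bound the probability of two bad events by Lemma \ref{lem:random-subset} and then take a union bound.

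The first bad event is $\{\lambda_{*}(S_{d}) \ge \rho\}$. Lemma \ref{lem:random-subset} (1) bounds its probability by $(3^{d}-1)e^{-m_{d}\rho^{2}/2} \le 3^{d} e^{-c d \rho^{2}/2} = e^{-\gamma d}$, using $m_{d} \ge cd$.

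The second bad event is $\{S_{d} = -S_{d}\}$. In $G_{d}$ every non-zero $g$ has order $3$, so $-g \ne g$. Lemma \ref{lem:random-subset} (2) therefore applies and gives the bound $(m_{d}-1)/(3^{d}-2)$. I will check the elementary inequality $(m-1)/(3^{d}-2) \le m\,3^{1-d}$, which is equivalent to $3m(3^{d}-2) \ge 3^{d}(m-1)$. This holds for $m \ge 1$, $d \ge 1$, because it reduces to $2m3^{d} + 3^{d} \ge 6m$.

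On the complement of both events we argue deterministically. First, $S_{d}$ generates $G_{d}$. If it did not, there would be a nontrivial character $\chi$ trivial on the proper subgroup $\langle S_{d}\rangle$, since the characters of $G/\langle S\rangle$ separate points. Then $\mathrm{Re}\,\widehat{p_{S_{d}}}(\chi) = 1 \ge \rho$, contradicting $\lambda_{*}(S_{d}) < \rho$. Given generation, the remaining claims follow from earlier results:
\begin{itemize}
\item irreducibility and $\textup{BE} \ge 0$ from Theorem \ref{thm:abelian};
\item $\Delta = m_{d}$ from Lemma \ref{lem:cayley-spectral} (1), using $|G_{d}| \ge 3$;
\item $t_{\mathrm{rel}} = (1-\lambda_{*})^{-1} \le (1-\rho)^{-1}$ from Lemma \ref{lem:cayley-spectral} (2);
\item $k = 2$ from Lemma \ref{lem:k-exponent-3}, together with $S_{d} \ne -S_{d}$.
\end{itemize}

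For (2), the failure probabilities sum: $\sum_{d} e^{-\gamma d} < \infty$, and $\sum_{d} \lceil cd\rceil 3^{1-d} < \infty$. Borel--Cantelli gives, almost surely, a $d_{1}$ beyond which the conclusion of (1) holds. We then verify \eqref{eq:main-assumption} via Proposition \ref{prop:sufficient}, checking \eqref{eq:suff}. For $d \ge d_{1}$,
\[ t_{\mathrm{rel}} \sqrt{\frac{(1+k)^{5}(1+\log\Delta)^{3}}{\log|G_{d}|}} \le \frac{1}{1-\rho}\sqrt{\frac{3^{5}\,(1+\log\lceil cd\rceil)^{3}}{d\log 3}}, \]
and this tends to $0$ since $(\log d)^{3}/d \to 0$. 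Proposition \ref{prop:sufficient}, combined with $\textup{BE} \ge 0$, then gives \eqref{eq:main-assumption} and cutoff.

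I expect the main obstacle to be the generation step: deriving generation from the spectral bound rather than estimating it separately. The character argument above handles it cleanly. I also expect the Borel--Cantelli step to need care because the $S_{d}$ are not assumed independent, but the first Borel--Cantelli lemma does not require independence. Everything else is bookkeeping with the earlier lemmas.
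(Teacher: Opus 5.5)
Your proposal is correct and follows the paper's proof essentially step for step. You bound the same two bad events via Lemma \ref{lem:random-subset} and a union bound, use the same character argument to get generation from $\lambda_{*}(S_{d}) < \rho$, make the same appeals to Theorem \ref{thm:abelian} and Lemmas \ref{lem:cayley-spectral} and \ref{lem:k-exponent-3}, and finish with Borel--Cantelli plus Proposition \ref{prop:sufficient}. You also explicitly verify $(m_{d}-1)/(3^{d}-2) \le m_{d}\,3^{1-d}$, which the paper leaves implicit, and you rightly note that the first Borel--Cantelli lemma needs no independence of the $S_{d}$.
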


\begin{proof}
(1) 
Fix $d \ge d_{0}$ and let $N_{d} \coloneqq |G_{d}| = 3^{d}$.
By Lemma \ref{lem:random-subset} (1) with $m = m_{d} \ge cd$,
\[ \mathbb{P}\left(\lambda_{*}(S_{d}) \ge \rho\right) \le 3^{d} \exp\left(-\frac{c d \rho^{2}}{2}\right)  = e^{-\gamma d}. \]
Every non-zero element of $G_{d}$ has order $3$, so $-g \ne g$ for $g \ne 0$, and Lemma \ref{lem:random-subset} (2) gives
\[ \mathbb{P}\left(S_{d} = -S_{d}\right) \le \frac{m_{d} - 1}{3^{d} - 2} \le m_{d}\, 3^{1-d}. \]

Consider the event 
\[ \mathcal{A}_d \coloneqq \left\{\lambda_{*}(S_{d}) < \rho\right\} \cap \left\{S_{d} \ne -S_{d}\right\},\] 
whose probability is at least $1 - e^{-\gamma d} - m_{d} 3^{1-d}$.

On $\mathcal{A}_d$, the set $S_{d}$ generates $G_{d}$. 
Otherwise the subgroup $H$ generated by $S_{d}$ is proper. 
Then there exists a non-trivial character of $G/H$ and 
hence there exists $\chi \in \widehat{G_{d}}$ with $\chi \ne 1$ and $\chi \equiv 1$ on $H$. 
Now we see that 
\[ \lambda_{*}(S_{d}) = \mathrm{Re}\, \widehat{p_{S_{d}}}(\chi) = 1 > \rho, \]
which is a contradiction.

By Theorem \ref{thm:abelian}, $(G_{d}, P_{S_{d}})$ is a finite irreducible Markov chain with $\textup{BE}(G_{d}, P_{S_{d}}) \ge 0$.
By Lemma \ref{lem:cayley-spectral} (1) and (2), $\Delta(P_{S_{d}}) = m_{d}$ and $t_{\mathrm{rel}} = \left(1 - \lambda_{*}(S_{d})\right)^{-1} \le (1-\rho)^{-1}$.
By Lemma \ref{lem:k-exponent-3} and $S_{d} \ne -S_{d}$, $k_{(G_{d}, P_{S_{d}})} = 2$.

(2)
Since $\sum_{d \ge d_{0}} \left(e^{-\gamma d} + m_{d}\, 3^{1-d}\right) < \infty$, the Borel--Cantelli lemma implies that almost surely $\mathcal{A}_d$ fails for only finitely many $d$.
Fix such a realization and let $d_{1} \ge d_{0}$ be such that $\mathcal{A}_d$ holds for every $d \ge d_{1}$.
Then $\left(G_{d}, P_{S_{d}}\right)_{d \ge d_{1}}$ is a sequence of finite irreducible Markov chains with non-negative Bakry--\'Emery curvature. 
In the notation of Proposition \ref{prop:sufficient}, we see that $k_{d} = 2$, $\Delta_{d} = m_{d} \le cd + 1$, $t_{\mathrm{rel}}^{(d)} \le (1-\rho)^{-1}$ and $|G_{d}| = 3^{d}$. 
Therefore, 
\[ t_{\mathrm{rel}}^{(d)} \sqrt{\frac{(1+k_{d})^{5}\left(1+\log \Delta_{d}\right)^{3}}{\log |G_{d}|}}   \le \frac{1}{1-\rho} \sqrt{\frac{3^{5} \left(1 + \log(cd+1)\right)^{3}}{d \log 3}}  \to 0, \quad d \to \infty. \]
Hence \eqref{eq:suff} holds, and Proposition \ref{prop:sufficient} yields \eqref{eq:main-assumption} and the cutoff.
\end{proof}

By Theorem \ref{thm:discrete}, we also obtain a cutoff in discrete time for the lazy versions of these chains.

\begin{Cor}\label{cor:z3d-lazy}
Let $\delta \in (0,1)$, and let $c, \rho, d_{0}, (S_{d})_{d \ge d_{0}}$ be as in Theorem \ref{thm:z3d-random}.
For $d \ge d_{0}$, let $P'_{d} \coloneqq \delta I + (1 - \delta) P_{S_{d}}$.
Then, almost surely, there exists $d_{1} \ge d_{0}$ such that $\left(G_{d}, P'_{d}\right)_{d \ge d_{1}}$ is a sequence of finite irreducible $\delta$-lazy Markov chains
with non-negative Bakry--\'Emery curvature satisfying \eqref{eq:main-assumption},
and a cutoff occurs for its discrete mixing times.
\end{Cor}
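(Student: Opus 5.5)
The plan is to reduce Corollary \ref{cor:z3d-lazy} to Theorem \ref{thm:z3d-random} (2) and Theorem \ref{thm:discrete}, using Lemma \ref{lem:lazy-comparison} in reverse to move between $P'_d$ and $P_{S_d}$. Fix a realization in the almost-sure event of Theorem \ref{thm:z3d-random} (2), and let $d_1$ be the index given there. Then, for every $d \ge d_1$, the chain $(G_d, P_{S_d})$ has the following properties: it is irreducible, it has non-negative Bakry--\'Emery curvature, $k = 2$, $\Delta(P_{S_d}) = m_d$, and $t_{\mathrm{rel}} \le (1-\rho)^{-1}$. Moreover, the sequence satisfies \eqref{eq:suff}.

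Next I transfer these properties to $P'_d$. The matrix $P'_d = \delta I + (1-\delta)P_{S_d}$ is clearly stochastic and $\delta$-lazy. With $Q := (1-\delta)^{-1}(P'_d - \delta I) = P_{S_d}$, Lemma \ref{lem:lazy-comparison} applies to $P'_d$ and gives the following.
\begin{itemize}
\item Irreducibility and uniform stationarity are inherited, since $d_{P'_d} = d_{P_{S_d}}$ by (2).
\item $k_{(G_d,P'_d)} = k_{(G_d,P_{S_d})} = 2$.
\item $\Delta(P'_d) = \Delta(P_{S_d})/(1-\delta)$.
\item $t_{\mathrm{rel}}(P'_d) = t_{\mathrm{rel}}(P_{S_d})/(1-\delta)$.
\item $\textup{BE}(G_d,P'_d) = (1-\delta)\,\textup{BE}(G_d,P_{S_d}) \ge 0$.
\end{itemize}

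Then I verify \eqref{eq:main-assumption} for $(G_d, P'_d)$ through Proposition \ref{prop:sufficient}. I recompute \eqref{eq:suff} with the new quantities $k = 2$, $\Delta \le (cd+1)/(1-\delta)$, $t_{\mathrm{rel}} \le (1-\rho)^{-1}(1-\delta)^{-1}$ and $|G_d| = 3^d$. This gives a bound of the form
\[ \frac{1}{(1-\rho)(1-\delta)}\sqrt{\frac{3^5\left(1+\log\frac{cd+1}{1-\delta}\right)^3}{d\log 3}} \to 0. \]
Proposition \ref{prop:sufficient} then yields \eqref{eq:main-assumption} for every $\varepsilon \in (0,1/2)$. (Alternatively, one could transfer \eqref{eq:main-assumption} directly using (6) of Lemma \ref{lem:lazy-comparison}, but the route through \eqref{eq:suff} avoids handling mixing times.) With this in hand, Theorem \ref{thm:discrete} applies to $(G_d,P'_d)_{d\ge d_1}$ and gives cutoff for the discrete mixing times.

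The only delicate point is bookkeeping. Lemma \ref{lem:lazy-comparison} must be applied with $P'_d$ in the role of $P$, so each quantity gains a factor $(1-\delta)^{-1}$ rather than losing one. We also need $\Delta(P'_d) \ge 1$ and a growth rate of only $O(\log d)$ inside the logarithm, and both are immediate here. Everything holds on the same almost-sure event, so no further probabilistic argument is needed.
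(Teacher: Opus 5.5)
Your argument is correct and has the same skeleton as the paper's proof. You fix a realization from Theorem \ref{thm:z3d-random} (2), transfer the properties to $P'_d$ through Lemma \ref{lem:lazy-comparison} with $P=P'_d$ and $Q=P_{S_d}$, and finish with Theorem \ref{thm:discrete}.

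The only real difference is how you obtain \eqref{eq:main-assumption}. The paper transfers it directly from $(G_d,P_{S_d})$. By Lemma \ref{lem:lazy-comparison} (6), $t_{\mathrm{mix}}^{P'_d}=t_{\mathrm{mix}}^{P_{S_d}}/(1-\delta)$. Combined with $1+\log\Delta(P'_d)\le(1+\log\frac{1}{1-\delta})(1+\log\Delta(P_{S_d}))$, this shows that the quantity in \eqref{eq:main-assumption} for $P'_d$ is at most $(1-\delta)^{-1/2}(1+\log\frac{1}{1-\delta})$ times the one for $P_{S_d}$. That route shows in general that $\delta$-lazification preserves \eqref{eq:main-assumption} up to a constant, for any sequence.

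You instead re-check \eqref{eq:suff} with the rescaled parameters and apply Proposition \ref{prop:sufficient} a second time. This avoids mixing times altogether, but it is tied to the explicit bounds of this example. Your computation is right, and the implication \eqref{eq:suff} $\Rightarrow$ \eqref{eq:main-assumption} in Proposition \ref{prop:sufficient} needs no curvature, so the step is sound.

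One small bookkeeping point: Lemma \ref{lem:lazy-comparison} is stated for an irreducible $P$. Citing its part (2) to obtain irreducibility of $P'_d$ is therefore formally circular. The entrywise bound $(P'_d)^k\ge(1-\delta)^kP_{S_d}^k$ settles it directly; this is what the paper uses, and it is what the proof of (2) actually shows.
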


\begin{proof}
Let $d_{1}$ be as in Theorem \ref{thm:z3d-random} (2), and let $d \ge d_{1}$.
The chain $(G_{d}, P'_{d})$ is $\delta$-lazy, and $P_{S_{d}} = (1-\delta)^{-1}\left(P'_{d} - \delta I\right)$.
Since $(P'_{d})^{k} \ge (1-\delta)^{k} P_{S_{d}}^{k}$ entrywise for every $k \in \mathbb{N}_{0}$, the chain $(G_{d}, P'_{d})$ is irreducible, and its stationary distribution is the uniform distribution.
Hence Lemma \ref{lem:lazy-comparison}, applied to $P = P'_{d}$ and $Q = P_{S_{d}}$, shows that
\[ k_{(G_{d}, P'_{d})} = k_{(G_{d}, P_{S_{d}})}, \qquad \Delta(P'_{d}) = \frac{\Delta(P_{S_{d}})}{1 - \delta}, \qquad
   t_{\mathrm{rel}}(P'_{d}) = \frac{t_{\mathrm{rel}}(P_{S_{d}})}{1 - \delta}, \]
\[ \textup{BE}(G_{d}, P'_{d}) = (1 - \delta)\, \textup{BE}(G_{d}, P_{S_{d}}) \ge 0, \qquad
   t_{\mathrm{mix}}^{P'_{d}}(\eta) = \frac{t_{\mathrm{mix}}^{P_{S_{d}}}(\eta)}{1 - \delta}, \quad \eta \in (0,1), \]
where $t_{\mathrm{rel}}(P'_{d})$ and $t_{\mathrm{rel}}(P_{S_{d}})$ denote the relaxation times, and $t_{\mathrm{mix}}^{P'_{d}}$ and $t_{\mathrm{mix}}^{P_{S_{d}}}$ the mixing times, of the two chains.
Since
\[ 1 + \log \Delta(P'_{d}) \le \left(1 + \log \frac{1}{1-\delta}\right)\left(1 + \log \Delta(P_{S_{d}})\right), \]
the quantity in \eqref{eq:main-assumption} for $(G_{d}, P'_{d})$ is at most $\left(1 - \delta\right)^{-1/2} \left(1 + \log \frac{1}{1-\delta}\right)$ times the one for $(G_{d}, P_{S_{d}})$,
which tends to $0$ by Theorem \ref{thm:z3d-random} (2). The cutoff in discrete time follows from Theorem \ref{thm:discrete}.
\end{proof}

\noindent{\it Acknowledgements} \ The author has received funding from JSPS KAKENHI Grant Number JP22K13928.

\bibliographystyle{plain}
\bibliography{cutoff_asym}

\end{document}